\DeclarePairedDelimiter{\abs}{\lvert}{\rvert}
\newcommand{\Sing}{\mathrm{Sing}}
\newcommand{\mult}{\mathrm{mult}}
\newcommand{\Jac}{\mathrm{Jac}}
\newcommand{\lct}{\mathrm{lct}}
\renewcommand{\d}{\partial}
\patchcmd{\@citex}{\if@filesw}{\getcitekey\@citeb \if@filesw}%
    {\typeout{*** SUCCESS ***}}{\typeout{*** FAIL ***}}
\patchcmd{\nocite}{\if@filesw}{\getcitekey\@citeb \if@filesw}%
    {\typeout{*** SUCCESS ***}}{\typeout{*** FAIL ***}}
\newcommand{\MHM}{\operatorname{MHM}}
\newcommand{\pt}{\mathit{pt}}
\newcommand{\Dmod}{\mathscr{D}}
\newcommand{\Mmod}{\mathcal{M}}
\newcommand{\shT}{\mathscr{T}}
\newcommand{\derR}{\mathbf{R}}
\newcommand{\dx}{\mathit{dx}}
\newcommand{\ZZ}{\mathbb{Z}}
\newcommand{\QQ}{\mathbb{Q}}
\newcommand{\RR}{\mathbb{R}}
\newcommand{\CC}{\mathbb{C}}
\newcommand{\PP}{\mathbb{P}}
\newcommand{\menge}[2]{\bigl\{ \thinspace #1 \thinspace\thinspace \big\vert%
\thinspace\thinspace #2 \thinspace \bigr\}}
\DeclareMathOperator{\Res}{Res}
\DeclareMathOperator{\id}{id}
\renewcommand{\Re}{\operatorname{Re}}
\DeclareMathOperator{\Supp}{Supp}
\DeclareMathOperator{\codim}{codim}
\DeclareMathOperator{\Sym}{Sym}
\DeclareMathOperator{\gr}{gr}
\DeclareMathOperator{\DR}{DR}
\DeclareMathOperator{\End}{End}
\DeclareMathOperator{\Ext}{Ext}
\DeclareMathOperator{\GL}{GL}
\DeclareMathOperator{\Pic}{Pic}
\newcommand{\sltwo}{\mathfrak{sl}_2(\CC)}
\newcommand{\shf}[1]{\mathscr{#1}}
\newcommand{\OX}{\shf{O}_X}
\newcommand{\OmX}{\Omega_X}
\newcommand{\defeq}{\underset{\textrm{def}}{=}}
\newcommand{\argbl}{-}
\def\overbar#1#2#3{{%
	\setbox0=\hbox{\displaystyle{#1}}%
	\dimen0=\wd0
	\advance\dimen0 by -#2 
	\vbox {\nointerlineskip \moveright #3 \vbox{\hrule height 0.3pt width \dimen0}%
		\nointerlineskip \vskip 1.5pt \box0}%
}}
\newcommand{\into}{\hookrightarrow}
\newcommand{\ju}{j^{\ast}}
\newcommand{\fu}{f^{\ast}}
\newcommand{\fl}{f_{\ast}}
\newcommand{\iu}{i^{\ast}}
\newcommand{\pl}{p_{\ast}}
\newcommand{\tl}{t_{\ast}}
\newcommand{\shE}{\shf{E}}
\newcommand{\shO}{\shf{O}}
\theoremstyle{plain}
\newcommand{\theoremref}[1]{\hyperref[#1]{Theorem~\ref*{#1}}}
\newcommand{\lemmaref}[1]{\hyperref[#1]{Lemma~\ref*{#1}}}
\newcommand{\definitionref}[1]{\hyperref[#1]{Definition~\ref*{#1}}}
\newcommand{\propositionref}[1]{\hyperref[#1]{Proposition~\ref*{#1}}}
\newcommand{\conjectureref}[1]{\hyperref[#1]{Conjecture~\ref*{#1}}}
\newcommand{\corollaryref}[1]{\hyperref[#1]{Corollary~\ref*{#1}}}
\newcommand{\exampleref}[1]{\hyperref[#1]{Example~\ref*{#1}}}
\newcommand{\exerciseref}[1]{\hyperref[#1]{Exercise~\ref*{#1}}}
\let\old@caption\caption
\renewcommand*{\caption}[1]{%
	\setcounter{figure}{\value{equation}}%
	\stepcounter{equation}%
	\old@caption{#1}\relax%
}
\newcounter{intro}
\newtheorem{intro-conjecture}[intro]{Conjecture}
\newtheorem{intro-corollary}[intro]{Corollary}
\newtheorem{intro-theorem}[intro]{Theorem}
\newcommand{\OY}{\shO_Y}
\newcommand{\parref}[1]{\hyperref[#1]{\S\ref*{#1}}}
\newcommand{\chapref}[1]{\hyperref[#1]{Chapter~\ref*{#1}}}
\newcommand*\if@single[3]{%
  \setbox0\hbox{${\mathaccent"0362{#1}}^H$}%
  \setbox2\hbox{${\mathaccent"0362{\kern0pt#1}}^H$}%
  \ifdim\ht0=\ht2 #3\else #2\fi
  }
\newcommand*\rel@kern[1]{\kern#1\dimexpr\macc@kerna}
\newcommand*\widebar[1]{\@ifnextchar^{{\wide@bar{#1}{0}}}{\wide@bar{#1}{1}}}
\newcommand*\wide@bar[2]{\if@single{#1}{\wide@bar@{#1}{#2}{1}}{\wide@bar@{#1}{#2}{2}}}
\newcommand*\wide@bar@[3]{%
  \begingroup
  \def\mathaccent##1##2{%
    \if#32 \let\macc@nucleus\first@char \fi
    \setbox\z@\hbox{$\macc@style{\macc@nucleus}_{}$}%
    \setbox\tw@\hbox{$\macc@style{\macc@nucleus}{}_{}$}%
    \dimen@\wd\tw@
    \advance\dimen@-\wd\z@
    \divide\dimen@ 3
    \@tempdima\wd\tw@
    \advance\@tempdima-\scriptspace
    \divide\@tempdima 10
    \advance\dimen@-\@tempdima
    \ifdim\dimen@>\z@ \dimen@0pt\fi
    \rel@kern{0.6}\kern-\dimen@
    \if#31
      \overline{\rel@kern{-0.6}\kern\dimen@\macc@nucleus\rel@kern{0.4}\kern\dimen@}%
      \advance\dimen@0.4\dimexpr\macc@kerna
      \let\final@kern#2%
      \ifdim\dimen@<\z@ \let\final@kern1\fi
      \if\final@kern1 \kern-\dimen@\fi
    \else
      \overline{\rel@kern{-0.6}\kern\dimen@#1}%
    \fi
  }%
  \macc@depth\@ne
  \let\math@bgroup\@empty \let\math@egroup\macc@set@skewchar
  \mathsurround\z@ \frozen@everymath{\mathgroup\macc@group\relax}%
  \macc@set@skewchar\relax
  \let\mathaccentV\macc@nested@a
  \if#31
    \macc@nested@a\relax111{#1}%
  \else
    \def\gobble@till@marker##1\endmarker{}%
    \futurelet\first@char\gobble@till@marker#1\endmarker
    \ifcat\noexpand\first@char A\else
      \def\first@char{}%
    \fi
    \macc@nested@a\relax111{\first@char}%
  \fi
  \endgroup
}
\newcommand{\wbar}[1]{\mathpalette\dowidebar{#1}}
\newcommand{\dowidebar}[2]{\widebar{#1#2}}
\theoremstyle{plain} 
\newtheorem{thm}{Theorem}[section]
\newtheorem{lemma}[thm]{Lemma}
\newtheorem{prop}[thm]{Proposition}
\newtheorem{corollary}[thm]{Corollary}
\theoremstyle{definition}
\newtheorem{setup}[thm]{Set-up}
\newtheorem{definition}[thm]{Definition}
\newtheorem{example}[thm]{Example}
\newtheorem{remark}[thm]{Remark}
\newtheorem{conjecture}[thm]{Conjecture}
\newtheorem{question}[thm]{Question}
\newcommand{\C}{\mathbb{C}}
\newcommand{\Q}{\mathbb{Q}}
\newcommand{\Z}{\mathbb{Z}}
\newcommand{\R}{\mathbb{R}}
\newcommand{\N}{\mathbb{N}}
\renewcommand{\P}{\mathbb{P}}
\newcommand{\cG}{\mathcal{G}}
\newcommand{\cI}{\mathcal{I}}
\newcommand{\cJ}{\mathcal{J}}
\newcommand{\cM}{\mathcal{M}}
\newcommand{\cO}{\mathcal{O}}
\newcommand{\cS}{\mathcal{S}}
\newcommand{\cV}{\mathcal{V}}
\newcommand{\bA}{\mathbf{A}}
\newcommand{\bZ}{\mathbf{Z}}
\newcommand{\fm}{\mathfrak{m}}
\newcommand{\sD}{\mathscr{D}}
\newcommand{\sH}{\mathscr{H}}
\newcommand{\sO}{\mathscr{O}}
\newcommand{\sT}{\mathscr{T}}
\newcommand{\Cur}{\mathfrak{C}}
\DeclareMathOperator{\dv}{div}
\newcommand{\Hsl}{\mathsf{H}}
\newcommand{\Xsl}{\mathsf{X}}
\newcommand{\Ysl}{\mathsf{Y}}
\newcommand{\wsl}{\mathsf{w}}
\newcommand{\Dbcoh}{D_{\mathit{coh}}^{\mathit{b}}}
\newcommand{\tensor}{\otimes}
\newcommand{\Mmodt}{\widetilde{\Mmod}}
\newcommand{\Mt}{\widetilde{M}}
\numberwithin{equation}{section}
\begin{document}

\title{Higher multiplier ideals}

\author{Christian Schnell and Ruijie Yang}
\subjclass[2020]{Primary: 14J17, 14D07, 14F10, Secondary: 14F18}

\keywords{Higher multiplier ideals, nearby and vanishing cycles, V-filtrations, twisted differential operators, complex Hodge modules, the center of minimal exponent, vanishing theorems.}
\maketitle

{\centering \footnotesize\emph{Dedicated to Rob Lazarsfeld on the occasion of his 70th birthday}\par}

\begin{abstract} 
We associate a family of ideal sheaves to any $\Q$-effective divisor
on a complex manifold, called higher multiplier ideals, using the theory of
mixed Hodge modules and $V$-filtrations. This family is indexed by two parameters,
an integer indicating the Hodge level and a rational number, and these ideals
admit a weight filtration. When the Hodge level is zero, they recover the usual
multiplier ideals.
 
We study the local and global properties of higher multiplier ideals systematically. In particular, we prove vanishing theorems and restriction theorems, provide
criteria for the nontriviality, and introduce the center of minimal exponent (generalizing the notion of minimal log canonical center). The main idea is to exploit the
global structure of the $V$-filtration along an effective divisor using the notion of
twisted Hodge modules. 
As applications, we prove new cases of conjectures by Debarre, Casalaina-Martin and
Grushevsky on singularities of theta divisors on principally polarized abelian
varieties.
\end{abstract}

\setcounter{tocdepth}{1}

\section{Introduction}

Let $X$ be a complex manifold of dimension $n$, and let $D$ be an effective divisor
on $X$. In this work, we construct a family of ideal sheaves $\cI_{k,\alpha}(D)$ from
the pair $(X,D)$, using the Hodge theory of the Kashiwara-Malgrange filtration along
$D$; they are indexed by $k\in \N$ and $\alpha \in \Q$. In the special case where
the divisor $D$ is reduced, these ideals were first considered by Saito
\cite{Saito16} under the name ``microlocal multiplier ideal sheaves''. We prefer to
call them \emph{higher multiplier ideals}, because $\cI_{0,<-\alpha}(D)=\cJ(X,\alpha
D)$ is the usual multiplier ideal of the effective $\Q$-divisor $\alpha D$. (Here the
subscript $<\beta$ is an abbreviation for $\beta - \varepsilon$, where $\varepsilon
> 0$ is a small positive rational number.) 

One purpose of this work is to show that these ideal sheaves can serve as a new
measure of singularities of $D$ and lead to a number of applications for
singularities of divisors. The theory is most interesting in those cases where there
is no useful information left in the usual multiplier ideals; this happens for
example for theta divisors on principally polarized abelian varieties (more generally
when $(X,D)$ is log canonical). The main improvement over Saito's work \cite{Saito16}
is that we look at the ideals $\cI_{k, \alpha}(D)$ from a \emph{global} point of view,
using the theory of twisted $\Dmod$-modules. This leads to many new results. In
the remainder of the introduction, we outline the most important of these: a version
of the Nadel vanishing theorem; restriction and semicontinuity theorems; and a numerical
jumping criterion. We postpone the (somewhat technical) definition until \S \ref{sec:
construction of HMI introduction}.

In \cite{DY25}, Davis and the second author prove several additional
results about higher multiplier ideals (such as a birational transformation formula)
and also settle the question on how higher multiplier ideals are related
to \emph{Hodge ideals} \cite{MPHodgeideal}.

\subsection{Local properties}

To begin with, the higher multiplier ideals $\cI_{k,\alpha}(D)$ have similar formal
properties as usual multiplier ideals. First, the sequence of ideal sheaves
$\{\cI_{k,\alpha}(D)
\}_{\alpha\in \Q}$ is discrete and right continuous. For any $\alpha \in \Q$, there
exist isomorphisms
\begin{align*}
 \cI_{k,\alpha-1}(D) &\xrightarrow{\sim}\cI_{k,\alpha}(D)\otimes \cO_X(-D),\quad \textrm{for $\alpha<0$},\\
\cI_{k+1,\alpha+1}(D)&\xrightarrow{\sim}  \cI_{k,\alpha}(D),  \quad \textrm{for $\alpha\geq -1$}, \end{align*}
and as a consequence, all $\cI_{k,\alpha}(D)$ are controlled by those with $\alpha\in [-1,0]$.
Moreover $\cI_{k+1,\alpha}(D) \subseteq \cI_{k,\alpha}(D)$ for any $\alpha,k$, see
Corollary \ref{corollary: Ik+1 contained in Ik}. 

Second, for a fixed value of $k \geq 0$, one has the containment
$\cI_{k,\alpha}(D)\subseteq \cI_{k,\beta}(D)$ whenever $\alpha\leq \beta$. As with
usual multiplier ideals, we call a rational number $\alpha\in \Q$ a \emph{jumping number} if
$\cI_{k,<\alpha}(D)\neq \cI_{k,\alpha}(D)$; one can show that all jumping numbers for
$\cI_{k, \alpha}(D)$ are strictly less than $k$. The graded pieces
\begin{equation}\label{eqn: graded pieces in the introduction}
    \cG_{k,\alpha}(D)\colonequals \cI_{k,\alpha}(D)/\cI_{k,<\alpha}(D)
\end{equation}
are supported inside the singular locus of $D$ if $\alpha\in (-1,0]$. They have an
additional ``weight filtration", indexed by $\Z$, whose subquotients we denote by the
symbol $\gr^W_{\ell}\cG_{k,\alpha}(D)$. This induces a weight filtration
$W_{\bullet}\cI_{k,\alpha}(D)$ via \eqref{eqn: graded pieces in the introduction}. 

\begin{remark}
We do not have a definition of higher multiplier ideals for $\Q$-divisors,
but it is possible to define the rank-one torsion free sheaf $\cI_{k,
\alpha}(D) \tensor \cO_X(kD)$ for arbitrary effective $\Q$-divisors; this is explained
in \S \ref{sec: Q divisors}.
\end{remark}

Let us summarize the most important local properties of higher multiplier ideals.
First, they behave well under restriction (compare \cite[Thm.~16.1]{MPHodgeideal}).
\begin{thm}\label{thm: restriction theorem}
Let $i:H\hookrightarrow X$ be the closed embedding of a smooth hypersurface that is not entirely contained in the support of $D$ so that the pullback $D_H=i^{\ast}D$ is defined. Then one has an inclusion $\cI_{k,\alpha}(D_H)\subseteq \cI_{k,\alpha}(D)\cdot \cO_H$, where the latter is defined as the image of $i^{\ast}\cI_{k,\alpha}(D)\to \cO_H$. If $H$ is sufficiently general, then
$\cI_{k,\alpha}(D_H)=\cI_{k,\alpha}(D)\cdot \cO_H$.
\end{thm}
By a standard argument, the restriction theorem implies the semicontinuity of higher
 multiplier ideals in families (Theorem \ref{thm: semicontinuity theorem}). We use these theorems to show that the presence of very singular points forces
 certain higher multiplier ideals to jump. Define
\[ \mathrm{Sing}_m(D)\colonequals \{x\in X \mid \mult_x D\geq m \}.\]
The following result is our version of \cite[Thm.~E]{MPHodgeideal} for higher
multiplier ideals.
\begin{thm}\label{thm: numerical criterion for nontriviality}
Suppose $Z\subseteq \Sing_m(D)$ is an irreducible component of dimension $d$. Write $n-d=km+r$ with $k\in \mathbb{N}$ and $0\leq r\leq m-1$. If $\alpha=-\frac{r+q-1}{m}$ for some $1\leq q\leq  \max(m-r,m-1)$, then $\cI_{k,<\alpha}(D)\subseteq  \cI_Z^{\langle q\rangle}$.
\end{thm}
Since the \emph{minimal exponent} $\tilde{\alpha}_D$ of the divisor $D$ (see \S \ref{sec: minimal exponent}) can be computed from the collection $\{\cI_{k,\alpha}(D)\}$ (Lemma~\ref{lemma: minimal exponent via Gkalpha}), this leads to a new upper bound of $\tilde{\alpha}_D$.

\begin{corollary}\label{corollary: upper bound of minimal exponent article}
For any irreducible component $Z$ of $\Sing_m(D)$ and $m\geq 2$, we have
\begin{equation}\label{eqn: upper bound by codimension of multiplicity}
\tilde{\alpha}_D \leq \frac{\mathrm{codim}_X(Z)}{m}.
\end{equation}
\end{corollary}
In particular, for $m=2$, we have $\textrm{codim}_X(D_{\textrm{sing}})\geq 2\tilde{\alpha}_D$. This strengthens \cite[Proposition 7.4]{MPminimalexponent}, which says that if $\tilde{\alpha}_D>k$, then $\textrm{codim}_X(D_{\textrm{sing}})\geq 2k+1$.

\subsection{Global properties}
We prove several vanishing theorems, similar in spirit to Nadel's vanishing theorem for usual
multiplier ideals, but involving the de Rham-type complex 
\begin{align*}
\gr^W_{\ell}K_{k,\alpha}(D)\colonequals [\Omega^{n-k}_X\otimes \gr^W_{\ell}\cG_{0,\alpha}(D) &\to \cdots \\
\cdots \to \Omega^{n-1}_X\otimes L^{k-1}&\otimes \gr^W_{\ell}\cG_{k-1,\alpha}(D) \to \Omega^{n}_X\otimes L^k\otimes \gr^W_{\ell}\cG_{k,\alpha}(D)][k].
\end{align*}
Once we go beyond the first step in the Hodge filtration, vanishing
theorems for mixed Hodge modules always involve complexes of sheaves (see
\cite{Schnell16} for example). 
\begin{thm}\label{thm: vanishing theorem for higher multiplier ideals}
Let $D$ be an effective divisor on a projective complex manifold $X$. Let $k\in \N,\ell\in \Z$ and $\alpha\in [-1,0]$. Furthermore, let $B$ be an effective divisor such that the $\Q$-divisor $B+\alpha D$ is ample. Then
\[ \mathbb{H}^i\left(X,\gr^W_{\ell}K_{k,\alpha}(D)\otimes \cO_X(B)\right)=0, \quad \textrm{for every } i>0.\]
\end{thm}
This is proved by relating higher multiplier ideals with the notion of \emph{twisted
polarizable Hodge modules}, which will be discussed in the next paragraph.  As usual,
the vanishing theorem improves dramatically when $X$ is an abelian variety (compare
\cite[\S28]{MPHodgeideal}).
\begin{thm}\label{thm: vanishing theorem on abelian varieties}
Let $D$ be an effective divisor on an abelian variety $A$ such that the line bundle $L=\cO_A(D)$ is ample. For any line bundle $\rho\in \mathrm{Pic}^0(A)$ and $i\geq 1$, we have
\begin{enumerate}
    \item $H^i\left(A,L^{k+1}\otimes W_{\ell} \, \cG_{k,\alpha}(D)\otimes \rho\right)=0$ for $k\in \N$, $\ell\in \Z$ and  $\alpha\in (-1,0]$.
    \item $H^i(A,L^k\otimes \cI_{k,0}(D)\otimes \rho)=0$ for $k\geq 1$.
    \item $H^i(A,L^{k+1}\otimes \cI_{k,\alpha}(D)\otimes \rho)=0$ for $k\in \N$ and $\alpha\in [-1,0)$.
\end{enumerate}
\end{thm}
\subsection{Twisted Hodge modules}

We get a handle on the global properties of higher multiplier ideals by relating them
to twisted $\sD$-modules. More precisely, we introduce a notion of \emph{twisted
polarizable Hodge modules}, which provides a convenient framework for discussing
the global structure of nearby and vanishing cycles along effective divisors.

First, let us recall the notion of twisted $\sD$-modules \cite[\S 2]{BB93}. Roughly speaking, these are objects that are locally
$\sD$-modules, but with a different transition rule from one coordinate chart to
another. The twisting depends on two parameters: a holomorphic line bundle $L$ on the
complex manifold $X$ and a complex number $\alpha \in \C$. Like the sheaf of
differential operators $\sD_X$ itself, the sheaf of \emph{$\alpha L$-twisted
differential operators} $\sD_{X,\alpha L}$ is a noncommutative $\cO_X$-algebra;
it still has an order filtration $F_{\bullet}$ such that 
\[ \gr^F_{k}\sD_{X,\alpha L} \cong \Sym^k\sT_X,\]
where $\sT_X$ is the tangent bundle of $X$. In particular, we have $\gr^F_{\bullet}\sD_{X,\alpha L}\cong \gr^F_{\bullet}\sD_X$. But unlike in the case of $\sD_X$ where $F_1\sD_X=\cO_X\oplus \sT_X$, the sequence 
\[ 0\to \cO_X \to F_1\sD_{X,\alpha L} \to \sT_X\to 0\]
does not split; instead, its extension class is equal to $\alpha\cdot c_1(L)$ in $\mathrm{Ext}^1_X(\sT_X,\cO_X)\cong H^1(X,\Omega^1_X)$; see \S \ref{par:tdo} for a construction of $\sD_{X,\alpha L}$ using differential operators on the total space of the line bundle $L$. 
An \emph{$\alpha L$-twisted $\sD$-module} is a \emph{right} module over $\sD_{X,\alpha L}$. One crucial difference with usual $\sD$-modules is that there is no de Rham complex for twisted $\sD$-modules, because there is no longer an action by $\sT_X$. But we do have $\gr^F_1\sD_{X,\alpha L}\cong \sT_X$, and so the ``graded pieces of the de Rham complex", by which we mean the complex
\begin{align*}
	\gr^F_p\DR(\cM)=\Bigl[\gr^F_{p-n}\cM&\otimes \bigwedge^n\sT_X\to \cdots \to \gr^F_{p-1}\cM\otimes \sT_X\to
\gr^F_p\cM\Bigr][n]
\end{align*}
still make sense for a twisted $\sD$-module $\cM$ with a good filtration $F_{\bullet}\cM$.

Claude Sabbah and the first author have been developing a theory of
\emph{complex mixed Hodge modules}, where one removes perverse sheaves from the
picture and describes polarizations as certain distribution-valued pairings on the
underlying $\sD$-modules. In \S \ref{sec:twisted MHM}, we extend this formalism to
\emph{$\alpha L$-twisted polarizable Hodge modules}: for $\alpha\in \mathbb{R}$,
these are filtered $\sD_{X,\alpha L}$-modules with a pairing valued in the sheaf of
$\alpha L$-twisted currents (see also \cite{SV11}). This poses no technical
difficulties, but leads to a very useful new point of view on certain global
questions. (For example, the results are also used in the paper \cite{BakkerSchnell} by
Bakker and the first author to give a new Hodge-theoretic proof of Hwang's theorem
on base manifolds of Lagrangian fibration on irreducible holomorphic symplectic
manifolds.)

We prove a general vanishing theorem of twisted polarizable Hodge modules.
\begin{thm}\label{thm: vanishing theorem for twisted Hodge module}
Let $D$ be an effective divisor on a projective complex manifold $X$ and set $L=\cO_X(D)$. For any $\alpha \in \mathbb{Q}$, let $M$ be an $\alpha L$-twisted Hodge module with strict support $Z$ and let $B$ be an effective divisor on $Z$ such that the $\Q$-divisor $B+\alpha D|_Z$ is ample. Then  
\[ \mathbb{H}^i\left(Z,\gr^F_k\DR(\cM)\otimes \cO_Z(B)\right)=0, \quad \textrm{when $i>0$ and $k\in \Z$}.\]
If $\Omega^1_X$ is trivial, then 
\[ \mathbb{H}^i\left(Z,\gr^F_k\cM\otimes \cO_Z(B)\right)=0, \quad \textrm{when $i>0$ and $k\in \Z$}.\]
\end{thm}
Some time after we posted the first version of this paper on \texttt{arXiv}, we learned that 
Davis and Vilonen have also proved a similar vanishing theorem \cite[Theorem
1.4]{Davisvilonen23} as part of their work on the Schmid-Vilonen conjecture.

\subsection{Higher multiplier ideals and twisted Hodge modules}
\label{sec: construction of HMI introduction}
We now give the construction of higher multiplier ideals and describe how they are related to twisted Hodge modules.
Let $D$ be an effective divisor on a complex manifold $X$.
 Let $L=\cO_X(D)$ be the corresponding holomorphic line bundle and let $s\in
 H^0(X,L)$ be a section with $\mathrm{div}(s)=D$. We view $s$ as a closed embedding
 $s: X\to L$ into the $(n+1)$-dimensional total space of the line bundle, which we
 denote by the same letter $L$. Let $M=s_{\ast}\mathbb{Q}_X^H[n]\in \MHM(L)$ be the direct image of the constant Hodge module on $X$, where $\MHM(L)$ is the category of graded-polarizable mixed Hodge modules on $L$. There are several interesting filtrations on the underlying $\sD$-module of $M$. First, the filtered \emph{right} $\sD$-module underlying $M$ is 
\begin{equation*}\label{eqn: underlying filtered D module for total graph embedding introduction}
    (\cM,F_{\bullet}\cM)=s_{+}(\omega_X,F_{\bullet}\omega_X),
\end{equation*}
where $\omega_X$ is viewed as a right $\sD$-module with $F_{-n}\omega_X=\omega_X$, $F_{-n-1}\omega_X=0$ and $s_{+}$ is the direct image functor for filtered $\sD$-modules. One can show that  $\gr^F_{-n+k}\cM \cong s_{\ast}(\omega_X\otimes L^k)$ for $k\in \N$. On the other hand, one has $V_{\bullet}\cM$, the $V$-filtration relative
to the zero section of $L$. For every $\alpha \in \Q$, the sheaf $\gr^F_{-n+k}V_{\alpha}\cM$ is a coherent sub-$\cO$-module of $\gr^F_{-n+k}\cM$. Then the higher multiplier ideal $\cI_{k,\alpha}(D)$ is defined as a unique coherent ideal sheaf satisfying
\begin{equation}\label{eqn: definition of Higher multiplier ideals in introduction}
    \gr^F_{-n+k}V_{\alpha}\cM = s_{\ast}\left(\omega_X\otimes L^k \otimes \cI_{k,\alpha}(D)\right).
\end{equation} 
Similarly, we define $\cI_{k,<\alpha}(D)$ using $V_{<\alpha}\cM$ (the discreteness of $V$-filtration implies that $\cI_{k,<\alpha}(D)=\cI_{k,\alpha-\epsilon}(D)$ for $0<\epsilon\ll 1$).
Denote by $\gr^V_{\alpha}\cM=V_{\alpha}\cM/V_{<\alpha}\cM$, which is equipped with a monodromy weight filtration $W_{\bullet}(N)\gr^V_{\alpha}\cM$. 
\begin{prop}\label{prop: vanishing cycles are twisted D-modules}
For $\ell\in \Z$ and $\alpha \in [-1,0]$, the pair
\[
\Bigl(\gr^{W(N)}_{\ell}\gr^V_{\alpha}\cM,F_{\bullet+\lfloor\alpha\rfloor}\gr^{W(N)}_{\ell}\gr^V_{\alpha}\cM
\Bigr)\]
is a filtered $\alpha L$-twisted $\sD$-module underlying an $\alpha L$-twisted polarizable Hodge module. \end{prop}
Recall that the weight filtration on $\gr^V_{\alpha}\cM$ as a mixed Hodge module is induced by
\[W_{\bullet}\gr^V_{\alpha}\cM=W_{\bullet-(n-1)}(N)\gr^V_{\alpha}\cM, \quad \alpha\in [-1,0),\quad W_{\bullet}\gr^V_{0}\cM=W_{\bullet-n}(N)\gr^V_{0}\cM,\]
see \eqref{eqn: nearby cycle D modules} and \eqref{eqn: unipotent vanishing cycle D modules} below.  By construction, the weight filtration $W_{\bullet}\cG_{k,\alpha}(D)$ is induced by 
\[ \omega_X\otimes L^k\otimes \gr^W_{\ell}\cG_{k,\alpha}(D)\cong \gr^F_{-n+k}\gr^W_{\ell}\gr^V_{\alpha}\cM.\]
This allows us to relate the higher multiplier ideals with twisted Hodge modules. We
need the theory of complex mixed Hodge modules \cite{SSMHMproject} for two reasons:
twisted $\Dmod$-modules make sense, but there are no ``twisted local systems'' (at
least not as objects on $X$); and, even locally, the individual $\gr_{\alpha}^V \cM$
do not have a rational structure.
\subsection{Application to minimal exponents}
We introduce a generalization of minimal log canonical centers. Let $D$ be an effective divisor on a complex manifold $X$. In birational geometry, the log canonical center is another basic invariant of the pair $(X,D)$. For example, it is a foundational result of Kawamata \cite{Kawamata} that any minimal log canonical center is normal and has at worst rational singularities. By the work of Lichtin \cite{Lichtin} and Koll\'ar \cite{Kollar97}, one has
$\lct(D)=\min\{1,\tilde{\alpha}_D\}$. Therefore it is natural to study the notion of \emph{the centers
of minimal exponent}, which is defined as the subscheme $Y\subseteq X$ such that $\gr^W_{\ell}\cG_{k,\alpha}(D)=\cO_Y$, where $\tilde{\alpha}_D=k-\alpha$ for $k\in \N$, $\alpha\in (-1,0]$ and $\ell$ is the
largest integer such that $\gr^W_{\ell}\cG_{k,\alpha}(D)\neq 0$ (see \S \ref{sec: the
center of minimal exponent}). We use local results about mixed Hodge modules to prove
the following generalization of Kawamata's theorem.

\begin{thm}\label{thm: rationality and normality of the center of minimal exponent article}
Every connected component of the center of minimal exponent of $(X,D)$ is irreducible, reduced, normal and has at worst rational singularities.
\end{thm}

\subsection{Application to geometric Riemann-Schottky problem}
Let $(A,\Theta)$ be a principally polarized abelian variety (p.p.a.v.) of dimension $g\geq 1$ which is indecomposable. By the work of Koll\'ar \cite{KollarShafarevich} and Ein-Lazarsfeld \cite{EL97}, it is known that $\Theta$ is normal with at worst rational singularities. Furthermore,
\begin{equation}\label{eqn: EinLazarsfeld bound}
    \dim \Sing_m(\Theta)\leq g-m-1, \quad \forall m\geq 2,
\end{equation}
where $\dim Z$ means the dimension of the largest component of $Z$ for a scheme $Z$. We are interested in the following conjecture by Casalaina-Martin \cite[Question 4.7]{Casalaina08}.
\begin{conjecture}\label{conjecture: Casalaina Martin}
If $m\geq 2$, then $\dim \Sing_m(\Theta)\leq g-2m+1$.
\end{conjecture}

This conjecture holds for theta divisors on the Jacobians of smooth projective curves (Marten's theorem \cite{ACGH}) and Prym theta divisors associated to etale double covers by Casalaina-Martin \cite{CasalainaMartin09}. In particular, it holds when $\dim A\leq 5$. The only known cases where equality holds in Conjecture \ref{conjecture: Casalaina Martin} are Jacobians of hyperelliptic curves, and intermediate Jacobians of cubic threefolds. This leads to the following stronger conjecture.
\begin{conjecture}\label{conjecture: Casalaina Martin stronger}
If $(A,\Theta)$ is not a hyperelliptic Jacobian or the intermediate Jacobian of a smooth cubic threefold, then $\dim \Sing_m(\Theta)\leq g-2m$, whenever $m\geq 2$.
\end{conjecture}
When $m=2$, it is due to Debarre \cite{Debarre88}. Conjecture \ref{conjecture: Casalaina Martin} implies a conjecture of Grushevsky \cite[Conjecture 5.12]{Grushevsky} saying that if $(A,\Theta)$ is an indecomposable p.p.a.v. and $x\in \Theta$ is any point, then $\mathrm{mult}_x\Theta \leq \frac{g+1}{2}$. This is proved in \cite[Theorem I]{MPHodgeideal} assuming $\Theta$ has only isolated singularities. We provide the first instance of Conjecture \ref{conjecture: Casalaina Martin stronger}.
\begin{thm}\label{thm: partial solution of Casalaina Martin conjecture}
    Assume the center of minimal exponent $Y$ of $(A,\Theta)$ is a one dimension scheme, then Conjecture \ref{conjecture: Casalaina Martin} holds and $Y$ must be a smooth hyperelliptic curve. Moreover, if there exists $m\geq 2$ such that  $\dim \Sing_m(\Theta)=g-2m+1$, then $(A,\Theta)=(\Jac(Y),\Theta_{\Jac(Y)})$. In particular, Conjecture \ref{conjecture: Casalaina Martin stronger} holds in this case. 
    \end{thm}
To have a better understanding of the picture, we also compute the center of minimal exponents for theta divisors in the boundary case of Conjecture \ref{conjecture: Casalaina Martin stronger} as follows. Let $C$ be a hyperelliptic curve of genus $g$ and write $A$ for the Jacobian of $C$. Consider an affine open subset of $A$ over which $\Theta$ is defined by a holomorphic function $f$. Denote by $W^r_{g-1}$ the locus of degree $g-1$ line bundles with at least $r+1$ sections.
\begin{thm}\label{thm: nearby cycle of hyperelliptic curve}
If $\lambda$ is a primitive $(r+1)$-th root of unity, then
$\psi_{f,\lambda}\Q_A[g]$ is pure with strict support equal to the intersection of
$W^r_{g-1}$ with the affine open subset above. Moreover, the center of minimal exponent of $(A,\Theta)$ is $\Theta_{\mathrm{sing}}$.
\end{thm}

 \subsection{Statement in terms of left $\sD$-modules}
 In this work, we work exclusively with \emph{right} $\sD$-modules (because this is more natural where spaces with singularities are involved), but one can also use \emph{left} $\sD$-modules to define higher multiplier ideals, where the notation is more aligned with the classical theory. 

Let us repeat the set-up in the beginning of introduction: assume $D$ is an effective divisor on $X$. Let $L=\cO_X(D)$ be the corresponding holomorphic line bundle, and let $s\in H^0(X,L)$ be a section with $\mathrm{div}(s)=D$, which is also viewed as a closed embedding $s: X\to L$, where $L$ is the total space of line bundle on $L$. Let $M=s_{\ast}\mathbb{Q}_X^H[n]\in \MHM(L)$ 
be the direct image of the constant Hodge module on $X$. Consider the  underlying \emph{left} filtered $\sD_X$-module $(\cM,F_{\bullet}\cM)=s_{+}(\cO_X,F_{\bullet}\cO_X)$, with $F_{0}\cO_X=\cO_X$ and $F_{-1}\cO_X=0$. Let $V^{\bullet}\cM$ be the $V$-filtration of $M$ relative to the zero section of $L$. For each $k\in \N$ and $\beta\in \Q$, we define $\cJ_k(\beta D)$ to be the unique ideal sheaf on $X$ satisfying
\[ s_{\ast}(\cJ_k(\beta D)\otimes \cO_X(kD))=\gr^F_{k}V^{>\beta}\cM. \]
Using the translation rule between left and right $\sD$-modules, we have
 \begin{equation*}\label{eqn: left and right higher multiplier ideals}
 \cJ_k(\beta D)=\cI_{k,<-\beta}(D), \quad \cI_{k,\beta}(D)=\cJ_k((-\beta-\epsilon)D).
\end{equation*}  
On the associated graded level we have $
\cJ_k((\beta-\epsilon)D)/\cJ_k(\beta D) =\cG_{k,-\beta}(D).$
Using this, all results discussed in the earlier part of Introduction can be translated. For example,  the Budur-Saito result translates into $\cJ_0(\alpha D)=\cJ(\alpha D)$.

\subsection{Acknowledgement} 
We thank the following people for helpful discussions: Bradley Dirks,
Lawrence Ein, Sam Grushevsky, J\'anos Koll\'ar, Rob Lazarsfeld, Mircea
Musta\c{t}\u{a}, Sung Gi Park, Stefan Schreieder, Jakub Witaszek, and Ziquan Zhuang. 

During the preparation of this paper, R.Y.~was partially supported by a research
fellowship at the Max-Planck-Institute for Mathematics. Ch.S.~was partially supported
by NSF grant DMS-1551677 and by a Simons Fellowship (award number 817464, Christian
Schnell). Both authors individually thank the Max-Planck-Institute for Mathematics
for providing them with excellent working conditions.

\section{Twisted $\sD$-modules and twisted Hodge modules}\label{sec: twisted D modules}

In this section, we review the theory of twisted $\sD$-modules and
introduce \emph{twisted Hodge modules}, generalizing the theory of polarizable
complex Hodge modules developed by Sabbah and the first author \cite{SSMHMproject} to the
setting of twisted $\sD$-modules. 
\subsection{Local trivializations}

We first explain a convenient way to deal with local trivializations. Let $X$ be a
complex manifold, and $L$ a holomorphic line bundle on $X$. A local trivialization of
$L$ is a pair $(U, \phi)$, where $U \subseteq X$ is an open subset and 
\[
	\phi \colon L \vert_U \to U \times \CC
\]
is an isomorphism of holomorphic line bundles. We can restrict a local trivialization
to any open subset of $U$ in the obvious way. In order to compare different local
trivializations, it is therefore enough to consider local trivializations over the
same open subset. Suppose now that we have two local trivializations $(U, \phi)$ and
$(U, \phi')$. The composition
\[
	\phi' \circ \phi^{-1} \colon U \times \CC \to L \vert_U \to U \times \CC
\]
then has the form $(x,t) \mapsto \bigl( x, g(x) t \bigr)$ for a unique
holomorphic function $g \in \Gamma(U, \shO_U^{\times})$ that is nonzero everywhere on $U$.
This way of thinking about local trivializations eliminates all the unnecessary subscripts.

We can also describe the change of trivialization in terms of sections. Let $s
\in \Gamma(U, L)$ be the nowhere vanishing section of $L$ determined by $(U, \phi)$:
the composition
\[
	\phi \circ s \colon U \to L \vert_U \to U \times \CC
\]
satisfies $(\phi \circ s)(x) = (x, 1)$. Similarly, define $s' \in \Gamma(U, L)$ using
the local trivialization $(U, \phi')$. Then $(\phi' \circ s)(x) = (x, g(x))$ and
$(\phi' \circ s')(x) = (x,1)$, and therefore $s = g \cdot s'$. 

\subsection{The sheaf of twisted differential operators}
\label{par:tdo}

Let $X$ be a complex manifold of dimension $n$, and let $L$ be a holomorphic line
bundle on $X$. We view $L$ as a complex manifold of dimension $n+1$, and denote the bundle
projection by $p \colon L \to X$. On $L$, we have the usual sheaf of differential
operators $\sD_L$. Let $\cI_X \subseteq \sO_L$ be the ideal sheaf of the zero
section. This gives us an increasing filtration
\[
	V_j \sD_L = \menge{P \in \sD_L}{\text{$P \cdot \cI_X^i \subseteq \cI_X^{i-j}$
	for all $i \geq 0$}}.
\]
We are only going to use $V_0 \sD_L$, which consists of those differential operators
that preserve the ideal of the zero section, and the quotient
\[
	\gr_0^V \sD_L = V_0 \sD_L / V_{-1} \sD_L = V_0 \sD_L / \cI_X V_0 \sD_L.
\]
This quotient is naturally a sheaf of $\sO_X$-bimodules. A local trivialization
$(U, \phi)$ for $L$ determines an isomorphism of sheaves of algebras (and
$\sO_U$-bimodules)
\[
	(\phi^{-1})_{\ast} \colon \sD_U[t \partial_t] \to \gr_0^V \sD_L \vert_U.
\]
We denote by $\theta \in \Gamma(U, \gr_0^V \sD_L)$ the image of $t
\partial_t = t \cdot \partial/\partial t$. This is actually a well-defined global
section of $\gr_0^V \sD_L$; the invariant description is as the vector field tangent
to the natural $\C^{\times}$-action on the line bundle $L$. This tells us what $\gr_0^V
\sD_L$ looks like locally.

Let us now try to understand what happens when we change the trivialization. Suppose
that $(U, \phi)$ and $(U, \phi')$ are two local trivializations of $L$, and that we
have local holomorphic coordinates $x_1, \dotsc, x_n$ on the open set $U$. Set
$\partial_j = \partial/\partial x_j$. A short computation using the chain rule shows
that the composition
\[
	\phi'_{\ast} \circ (\phi^{-1})_{\ast} \colon \sD_U[t \partial_t] \to 
	\gr_0^V \sD_L \vert_U \to \sD_U[t \partial_t]
\]
acts on the vector fields $\partial_1, \dotsc, \partial_n, t \partial_t$ in the
following way:
\[
	t \partial_t \mapsto t \partial_t, \quad
	\partial_j \mapsto \partial_j 
		+ g^{-1} \frac{\partial g}{\partial x_j} \cdot t \partial_t
\]

We use this to define the sheaf of twisted differential operators \cite{BB93}.

\begin{definition}
	Let $X$ be a complex manifold, $L$ a holomorphic line bundle on $X$, $\alpha \in \R$ a
	real number. The sheaf of $\alpha L$-twisted differential
	operators on $X$ is the quotient
	\[
		\sD_{X, \alpha L} = \gr_0^V \sD_L \big/ (\theta - \alpha) \gr_0^V \sD_L.
	\]
\end{definition}

In a local trivialization $(U, \phi)$, the sheaf of $\alpha L$-twisted
differential operators is just 
\[
	\sD_{X, \alpha L} \vert_U \cong 
	\sD_U[t \partial_t]/(t \partial_t - \alpha) \sD_U[t \partial_t] \cong \sD_U.
\]
We only see the difference with usual differential operators when we change the
trivialization: if $(U, \phi)$ and $(U, \phi')$ are two local trivializations, then
\[
	\phi'_{\ast} \circ (\phi^{-1})_{\ast} \colon \sD_U \to 
		\sD_{X, \alpha L} \vert_U \to \sD_U
\]
acts on the coordinate vector fields $\partial_1, \dotsc, \partial_n$ as
\begin{equation} \label{eq:vf-twisted}
	\partial_j \mapsto \partial_j 
		+ \alpha \cdot g^{-1} \frac{\partial g}{\partial x_j}.
\end{equation}
This formula is the reason for the name ``twisted'' differential operators. When
$\alpha = 0$, we get back the sheaf $\Dmod_X$ of usual differential operators.

\subsection{Twisted $\sD$-modules}

We keep the notation from above. An $\alpha L$-twisted $\Dmod$-module on $X$ is
simply a module over the sheaf $\sD_{X, \alpha L}$ of $\alpha L$-twisted differential
operators. We generally work with \emph{right} modules (because this is more
appropriate when dealing with singularities). Suppose that $\cM$ is a sheaf
of right $\sD_{X, \alpha L}$-modules. If we have a local trivialization $(U, \phi)$
for $L$, then the isomorphism
\begin{equation} \label{eq:phi-inv-ast}
	(\phi^{-1})_{\ast} \colon \sD_U \to \sD_{X, \alpha L} \vert_U
\end{equation}
gives $\cM \vert_U$ the structure of a usual $\sD_U$-module. A twisted $\sD$-module
therefore looks like a usual $\Dmod$-module in any local trivialization of $L$, but
the action by vector fields changes according to the formula in \eqref{eq:vf-twisted}
from one local trivialization to another. Since it is easy to get confused about the
signs, we give the local formulas. Let $\cM$ be a right module over $\sD_{X, \alpha
L}$. For every local trivialization $(U, \phi)$, we get a right $\sD_U$-module
$\cM_{(U, \phi)}$, whose $\sD$-module structure is defined by the rule
\[
	(m \cdot P)_{(U, \phi)} = m \cdot (\phi^{-1})_{\ast}(P), \quad \textrm{for every  $P\in \Gamma(U,\sD_U)$}.
\]
If $(U, \phi')$ is a second local trivialization, then we obtain
\[
	\left( m \cdot \biggl( \partial_j + \alpha g^{-1} \frac{\partial g}{\partial x_j}
		\biggr) \right)_{(U, \phi')} = m \cdot (\phi'^{-1})_{\ast} \bigl( \phi'_{\ast}
	\circ (\phi^{-1})_{\ast}(\partial_j) \bigr) 
	= \bigl( m \cdot \partial_j \bigr)_{(U, \phi)}.
\]
The $\Dmod$-module structure therefore changes from one local trivialization to
another in accordance with the identity in \eqref{eq:vf-twisted}.

Let us also convince ourselves that tensoring by the line bundle $L$ changes the
twisting parameter in the expected way. Suppose that $\cM$ is a right $\gr_0^V
\sD_L$-module. The tensor product $\cM \otimes L$ is naturally a sheaf of right
modules over $L^{-1} \otimes \gr_0^V \sD_L \otimes L$. If we view $\theta$ as a
morphism $\theta \colon \sO_X \to \gr_0^V \sD_L$, we see that $L^{-1} \otimes \gr_0^V
\sD_L \otimes L$ also has a global section that we denote by the same letter
$\theta$.

\begin{lemma}\label{lemma: canonical isomorphism of grV0DL and L tensor grV0DL}
	There is a canonical isomorphism 
	\[
		\gr_0^V \sD_L \cong L^{-1} \otimes \gr_0^V \sD_L \otimes L
	\]
	that takes the global section $\theta$ on the left-hand side to $\theta+1$. 
\end{lemma}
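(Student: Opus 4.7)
The plan is to construct the isomorphism locally in trivializations of $L$ and verify that it is independent of the trivialization. Fix a local trivialization $(U, \phi)$, with associated nowhere-vanishing section $s_\phi \in \Gamma(U, L)$ and dual section $s_\phi^* \in \Gamma(U, L^{-1})$. The isomorphism $(\phi^{-1})_\ast \colon \sD_U[\theta] \to \gr_0^V \sD_L|_U$ from \parref{par:tdo} trivializes $\gr_0^V \sD_L$, while the rule $s_\phi^* \otimes P \otimes s_\phi \mapsto P$ yields an isomorphism $(L^{-1} \otimes_{\sO_X} \gr_0^V \sD_L \otimes_{\sO_X} L)|_U \cong \sD_U[\theta]$; the $\sO_X$-bimodule structure on $\gr_0^V \sD_L$ makes the twisted tensor product into a sheaf of rings.

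Next I compute both transition cocycles. Under a change of trivialization $\phi \to \phi'$ with $\phi' \circ \phi^{-1}(x, t) = (x, g(x) t)$, the transition $T \colon \sD_U[\theta] \to \sD_U[\theta]$ for $\gr_0^V \sD_L$ was already computed in \parref{par:tdo}: it fixes $\theta$, fixes functions $f \in \sO_U$, and sends $\partial_j \mapsto \partial_j + g^{-1}(\partial g/\partial x_j)\,\theta$. For the twisted sheaf, the relations $s_\phi = g \cdot s_{\phi'}$ and $s_\phi^* = g^{-1} \cdot s_{\phi'}^*$ let me rewrite $s_\phi^* \otimes P \otimes s_\phi = s_{\phi'}^* \otimes (g^{-1} P g) \otimes s_{\phi'}$, where $g^{-1} P g$ is the product inside the ring $\gr_0^V \sD_L$; consequently the transition is $T_R(P) = g^{-1} \cdot T(P) \cdot g$, the inner twist of $T$ by $g$.

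I then define $\sigma \colon \sD_U[\theta] \to \sD_U[\theta]$ to be the unique ring automorphism fixing $\sD_U$ and sending $\theta \mapsto \theta + 1$; this is a ring homomorphism since $[\theta + 1, P] = [\theta, P]$ for every $P$. The main check is $\sigma \circ T = T_R \circ \sigma$ on generators. On $\theta$: both sides give $\theta + 1$, using that $\theta$ commutes with $\sO_X \subset \gr_0^V \sD_L$. On $\partial_j$: $\sigma(T(\partial_j)) = \sigma(\partial_j + g^{-1}(\partial g/\partial x_j)\theta) = \partial_j + g^{-1}(\partial g/\partial x_j)(\theta + 1)$, while $T_R(\sigma(\partial_j)) = g^{-1} \partial_j g + g^{-1}(\partial g/\partial x_j)\theta = \partial_j + g^{-1}(\partial g/\partial x_j) + g^{-1}(\partial g/\partial x_j)\theta = \partial_j + g^{-1}(\partial g/\partial x_j)(\theta + 1)$, using $g^{-1} \partial_j g = \partial_j + g^{-1}(\partial g/\partial x_j)$ inside $\sD_U$. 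Hence the local maps $\sigma_\phi$ glue to a global isomorphism of sheaves of rings $\sigma \colon \gr_0^V \sD_L \to L^{-1} \otimes_{\sO_X} \gr_0^V \sD_L \otimes_{\sO_X} L$ (its inverse sends $\theta$ to $\theta - 1$), and by construction $\sigma(\theta) = \theta + 1$.

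The main subtlety is the bookkeeping for the twisted transition: since $\sO_X$ is not central in $\gr_0^V \sD_L$, the inner twist $P \mapsto g^{-1} P g$ produces nontrivial extra terms involving $\partial g/\partial x_j$, and these interact precisely with the shift $\theta \mapsto \theta + 1$ to yield the compatibility --- a reflection of the geometric fact that tensoring with $L$ raises the weight of the $\C^\times$-action on $L$ by one.
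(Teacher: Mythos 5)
Your proof is correct and follows essentially the same route as the paper: both work in local trivializations, identify the transition cocycle of $L^{-1}\otimes\gr_0^V\sD_L\otimes L$ as the inner twist $P\mapsto g^{-1}\,\phi'_{\ast}(\phi^{-1})_{\ast}(P)\,g$, and check that the substitution $\theta\mapsto\theta+1$, $\partial_j\mapsto\partial_j$ intertwines the two cocycles. Your explicit verification of $\sigma\circ T=T_R\circ\sigma$ on generators is just the paper's computation written as a cocycle compatibility, so there is nothing to add.
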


\begin{proof}
	We give a local proof to show how the formulas work. Let us first work out the
	local description of $L^{-1} \otimes \gr_0^V \sD_L \otimes L$. Let $(U, \phi)$ be
	a local trivialization of $L$, and let $s \in \Gamma(U, L)$ be the resulting
	nowhere vanishing section. Denote by $s^{-1} \in \Gamma(U, L^{-1})$ the induced
	section of the dual line bundle. We get an isomorphism
	\[
		\sD_U[t \partial_t] \to L^{-1} \otimes \gr_0^V \sD_L \otimes L \vert_U, \quad
		P \mapsto s^{-1} \otimes (\phi^{-1})_{\ast}(P) \otimes s.
	\]
	Let $s' \in \Gamma(U, L)$ be the section determined by a second local
	trivialization $(U, \phi')$. This gives us a second isomorphism
	\[
		\sD_U[t \partial_t] \to L^{-1} \otimes \gr_0^V \sD_L \otimes L \vert_U, \quad
		Q \mapsto s'^{-1} \otimes (\phi'^{-1})_{\ast}(Q) \otimes s'.
	\]
	If we compose the first isomorphism with the inverse of the second one, and
	remember that $s = g s'$, we find that the change of trivialization is
	\[
		\sD_U[t \partial_t] \to \sD_U[t \partial_t], \quad
		P \mapsto g^{-1} \cdot \phi'_{\ast} \circ (\phi^{-1})_{\ast}(P) \cdot g.
	\]
	In local coordinates $x_1, \dotsc, x_n$ as above, this acts on the vector fields
	$\partial_1, \dotsc, \partial_n, t \partial_t$ by
	\[
		t \partial_t \mapsto t \partial_t, \quad
		\partial_j \mapsto \partial_j 
		+ g^{-1} \frac{\partial g}{\partial x_j} \cdot (t \partial_t + 1).
	\]
	This shows that the collection of isomorphisms
	\[
		\sD_U[t \partial_t] \to \sD_U[t \partial_t], \quad
			t \partial_t \mapsto t \partial_t + 1, \quad \partial_j \mapsto \partial_j,
	\]
	give us the desired isomorphism between $\gr_0^V \sD_L$ and $L^{-1} \otimes
	\gr_0^V \sD_L \otimes L$.	
\end{proof}

\begin{remark}\label{remark: change the twisting of twisted D module}
    In particular, Lemma \ref{lemma: canonical isomorphism of grV0DL and L tensor grV0DL} says that if $\cM$ is an $\alpha L$-twisted right $\sD$-module,
then the tensor product $\cM \otimes L$ is an $(\alpha+1) L$-twisted right
$\sD$-module. More generally, if $\cM$ is a right $\gr_0^V \sD_L$-module on which the
operator $\theta - \alpha$ acts nilpotently, then $\cM \otimes L$ is again a right
$\gr_0^V \sD_L$-module on which $\theta - (\alpha+1)$
acts nilpotently. Note that this only works nicely for right modules: if $\cM$ is a
left module over $\Dmod_{X, \alpha L}$-module, then $L \otimes \cM$ becomes a left
module over $\Dmod_{X, (\alpha -1)L}$. This is one of many reasons why it is better
to work with right $\sD_{X,\alpha L}$-modules.  
\end{remark}

\subsection{Twisted currents}

We also need a notion of twisted currents, in order to define hermitian pairings on
twisted $\sD$-modules. We first introduce the space of twisted test functions. These
are compactly supported sections of a certain smooth line bundle that we now describe.
Fix a real number $\alpha \in \R$. The principal $\C^{\times}$-bundle corresponding to
the holomorphic line bundle $L$ is obtained by removing the zero section from $L$.
Let $L_{\alpha}$ be the smooth line bundle associated to the representation
\[
	\C^{\times} \to \GL_1(\C), \quad z \mapsto \abs{z}^{2 \alpha}.
\]
We can also describe $L_{\alpha}$ in terms of local trivializations. A local
trivialization $(U, \phi)$ of $L$ determines an isomorphism
\[
	\phi_{\alpha} \colon L_{\alpha} \vert_U \to U \times \C.
\]
Let $(U, \phi')$ be a second local trivialization, and let $g \in \Gamma(U,
\sO_U^{\times})$ be the unique nowhere vanishing holomorphic function such that
$(\phi' \circ \phi^{-1})(x) = (x, g(x)t)$. Then we have
\[
	\phi'_{\alpha} \circ \phi_{\alpha}^{-1} \colon U \times \C \to L_{\alpha} \vert_U
	\to U \times \C, \quad (x, t) \mapsto \Bigl( x, \abs{g(x)}^{2 \alpha} t \Bigr).
\]
For example, a global section of $L_{\alpha}$ is the same thing as a collection of
smooth functions $\varphi_{(U, \phi)} \colon U \to \CC$, one for each local trivialization
$(U, \phi)$, such that $\varphi_{(U, \phi')} = \abs{g}^{2 \alpha} \varphi_{(U,
\phi)}$. The usual action by differential operators on smooth functions makes
$L_{\alpha}$ into a left bimodule over the sheaf $\Dmod_{X, \alpha L}$ of twisted
differential operators and its conjugate.

An $\alpha L$-twisted test function is a compactly supported smooth section of the
smooth line bundle $L_{\alpha}$. We give this space the topology that agrees with
the usual topology on the space of compactly supported smooth functions in any local
trivialization of $L$. 

\begin{definition}
	An \emph{$\alpha L$-twisted current} is a continuous linear functional on the
	space of $(-\alpha L)$-twisted test functions. We denote by $\Cur_{X, \alpha L}$ the
	sheaf of $\alpha L$-twisted currents.
\end{definition}

Here is a more concrete description. A $(-\alpha L)$-twisted test function $\varphi \in
\Gamma_c(X, L_{\alpha})$ is the same thing as a collection of smooth functions
$\varphi_{(U, \phi)} \colon U \to \CC$, one for each local trivialization $(U, \phi)$
of the line bundle $L$, that are compatible with restriction to open subsets, and are
related to each other by the formula
\[
	\varphi_{(U, \phi')} = \abs{g}^{-2\alpha} \cdot \varphi_{(U, \phi)},
\]
where $(\phi' \circ \phi^{-1})(x,t) = (x, g(x) t)$ is the transition from one local
trivialization to the other. Of course, the union of the supports of all the
functions $\varphi_{(U, \phi)}$ must be a compact subset of $X$. 
Dually, an $\alpha L$-twisted current $C \in \Gamma(X, \Cur_{X, \alpha L})$ is the
same thing as a collection of currents $C_{(U, \phi)} \in \Gamma(U, \Cur_X)$ that are
compatible with restriction, and are related to each other by the formula
\[
	C_{(U, \phi')} = C_{(U, \phi)} \cdot \abs{g}^{2\alpha}.
\]

Let us return to the general properties of twisted currents.  We denote the natural
pairing between twisted currents and twisted test functions by the symbol
\[
	\langle C, \varphi \rangle \in \C,
\]
for $C \in \Gamma(U, \Cur_{X, \alpha L})$ a twisted current and $\varphi \in
\Gamma_c(U, L_{-\alpha})$ a twisted test function. As usual, operations on twisted
currents are defined in terms of the corresponding operations on twisted test
functions. For example, the complex conjugate of a twisted current is defined by the
formula
\[
	\bigl\langle \overline{C}, \varphi \bigr\rangle 
	= \overline{\langle C, \overline{\varphi} \rangle}.
\]
The sheaf of $\alpha L$-twisted currents has the structure of a right $\sD_{X, \alpha
L}$-module. This can be seen as follows. Each local trivialization $(U, \phi)$ for
$L$ determines an isomorphism
\[
	\Cur_U \cong \Cur_{X, \alpha L} \vert_U,
\]
where $\Cur_U$ is the sheaf of currents on $U$ and is of course a right $\sD_U$-module. Moreover, the
transition from one local trivialization to another works correctly. Indeed, if we
have a twisted current $C$, represented by a collection of currents $C_{(U,
\phi)} \in \Gamma(U, \Cur_X)$ such that
\[
	C_{(U, \phi')} = C_{(U, \phi)} \abs{g}^{2 \alpha},
\]
then a brief computation shows that
\[
	\bigl( C_{(U, \phi)} \partial_j \bigr) \abs{g}^{2 \alpha}
	= C_{(U, \phi)} \abs{g}^{2 \alpha} \left( \partial_j + \alpha g^{-1}
	\frac{\partial g}{\partial x_j} \right)
	= C_{(U, \phi')} \left( \partial_j + \alpha g^{-1} 
		\frac{\partial g}{\partial x_j} \right).
\]
This proves that $\Cur_{X, \alpha L}$ is an $\alpha L$-twisted right $\sD$-module.

\subsection{Flat hermitian pairings on twisted $\sD$-modules}

Fix a real number $\alpha \in \R$. A flat hermitian pairing on an $\alpha L$-twisted
right $\sD$-module $\cM$ is a morphism of sheaves
\[
	S \colon \cM \otimes_{\C} \wbar{\cM} \to \Cur_{X, \alpha L}
\]
with the following properties. First, $S$ is hermitian symmetric, in the sense
that for any two local sections $m',m'' \in \Gamma(U, \cM)$, one has
\[
	S(m'',m') = \overline{S(m',m'')}
\]
as twisted currents on $U$. Second, $S$ is $\sD_{X, \alpha L}$-linear in its
first argument, meaning that
\[
	S(m' P, m'') = S(m', m'') P
\]
for every twisted differential operator $P \in \Gamma(U, \sD_{X, \alpha L})$. It
follows that $S$ is conjugate $\sD_{X, \alpha L}$-linear in its second argument. In
any local trivialization of $L$, the twisted $\sD$-module $\cM$ becomes a usual
$\sD$-module, and the flat hermitian pairing $S$ becomes a sesquilinear pairing as in
\cite[Ch.~12]{SSMHMproject}.

\subsection{Good filtrations}\label{sec: good filtrations}

The sheaf $\gr_0^V \sD_L$ inherits an increasing filtration $F_{\bullet} \gr_0^V
\sD_L$ from the order filtration on $\sD_L$. Locally, this is just the usual order
filtration on differential operators. Indeed, if $(U, \phi)$ is a local trivialization
for $L$, then under the isomorphism
\[
	(\phi^{-1})_{\ast} \colon \sD_U[t \partial_t] \to \gr_0^V \sD_L \vert_U,
\]
the filtration $F_{\bullet} \gr_0^V \sD_L \vert_U$ is just the order
filtration on $\sD_U[t \partial_t]$, with $t \partial_t$ being considered as a
differential operator of order $1$. Globally, the first nonzero piece of our
filtration is $F_0 \gr_0^V \sD_L \cong \sO_X$; the next graded piece $\gr_1^F \gr_0^V
\sD_L$ sits in a short exact sequence
\[
	\begin{tikzcd}[column sep=small] 
		0 \rar & \sO_X \rar{\theta} & \gr_1^F \gr_0^V \sD_L \rar & \sT_X \rar & 0,
	\end{tikzcd}
\]
whose extension class in $\Ext_{\sO_X}^1(\sT_X, \sO_X) \cong H^1(X, \Omega_X^1)$ is
the first Chern class $c_1(L)$. The order filtration on $\gr_0^V \sD_L$ induces a
filtration on $\sD_{X, \alpha L} = \gr_0^V \sD_L / (\theta - \alpha) \gr_0^V \sD_L$,
and from the above, we get a short exact sequence
\begin{equation} \label{eq:extension-alpha}
	\begin{tikzcd}[column sep=small]
		0 \rar & \sO_X \rar & F_1 \sD_{X, \alpha L} \rar & \sT_X \rar & 0,
	\end{tikzcd}
\end{equation} 
whose extension class is now $\alpha \cdot c_1(L)$. Note that
\[
	\gr_{\bullet}^F \sD_{X, \alpha L} \cong \gr_{\bullet}^F \sD_X \cong
	\Sym^{\bullet}\sT_X
\]
is isomorphic to the symmetric algebra on the tangent sheaf $\sT_X$, just as in the
untwisted case. This is a consequence of \eqref{eq:vf-twisted}.

Good filtrations on twisted $\sD$-modules are defined just as in the untwisted case.
Let $\cM$ be a right $\sD_{X, \alpha L}$-module, and let $F_{\bullet}
\cM$ be an exhaustive increasing filtration by coherent $\sO_X$-submodules such that,
locally on $X$, one has $F_k \cM = 0$ for $k \ll 0$. We say that such a filtration
$F_{\bullet} \cM$ is a good filtration if 
\[
	F_k \cM \cdot F_{\ell} \sD_{X, \alpha L} \subseteq F_{k+\ell} \cM,
\]
with equality for $k \gg 0$. As usual, this is equivalent to the condition that 
\[
	\gr_{\bullet}^F \cM = \bigoplus_{k \in \ZZ} F_k \cM/F_{k-1} \cM
\]
is coherent over $\gr_{\bullet}^F \sD_{X, \alpha L} \cong \Sym^{\bullet} \sT_X$.

\subsection{Graded pieces of the de Rham complex}\label{sec: graded pieces of de Rham}

One important difference between twisted $\sD$-modules and usual $\sD$-modules is
that there is no de Rham complex for twisted $\sD$-modules (unless $\alpha=0$),
because there is no longer an action by $\sT_X$. This is due to the fact that the
short exact sequence in \eqref{eq:extension-alpha} does not split (unless $\alpha =
0$). But we do have $\gr^F_1\sD_{X,\alpha L}\cong \sT_X$, so the notion of ``graded
pieces of the de Rham complex" still makes sense for a twisted $\sD$-module $\cM$
with a good filtration $F_{\bullet} \cM$.

\begin{definition}\label{definition: associated graded de Rham}
    Let $(\cM,F_{\bullet}\cM)$ be a twisted $\sD$-module on $X$ with a good filtration. For every $k\in \Z$, the \emph{graded piece of the de Rham complex} is defined by
\begin{equation}\label{eqn: graded pieces of the de Rham complex}
    \gr^F_k\DR(\cM)\colonequals \left[\gr^F_{k-n}\cM\otimes \bigwedge^n \sT_X\to \cdots\to \gr^F_{k-1}\cM\otimes \sT_X \to \gr^F_k\cM\right][n],
\end{equation} 
where $n=\dim X$ and where the differential is induced by the multiplication morphism $\gr_k^F \cM \otimes
\gr_1^F \sD_{X, \alpha L} \to \gr_{k+1}^F \cM$ and the isomorphism $\gr_1^F \sD_{X,
\alpha L} \cong \sT_X$.
\end{definition}

\subsection{Twisted Hodge modules}\label{sec:twisted MHM}

Before we can define twisted Hodge modules, which are the main objects in this
chapter, we briefly review the theory of complex Hodge modules from
\cite[\S14]{SSMHMproject}. We will give a simplified version of the definition (without
weights) that is sufficient for our purposes. 

Let $X$ be a complex manifold of dimension $n$. A complex Hodge module on $X$
consists of the following three pieces of data:
\begin{enumerate}
	\item A regular holonomic right $\sD_X$-module $\cM$.
	\item An increasing filtration $F_{\bullet} \cM$ by coherent $\sO_X$-submodules.
		This filtration needs to be good, which means that it is exhaustive; that $F_k
		\cM = 0$ for $k \ll 0$ locally on $X$; and that one has $F_k \cM \cdot F_{\ell}
		\sD_X \subseteq F_{k+\ell} \cM$, with equality for $k \gg 0$.
	\item A flat hermitian pairing $S \colon \cM \otimes_{\C} \overline{\cM} \to
		\Cur_X$, valued in the sheaf of currents of bidegree $(n,n)$ on $X$. Again, $S$
		needs to be hermitian symmetric and $\sD_X$-linear in its first argument (and
		therefore conjugate linear in its second argument).
\end{enumerate}
The object $M = (\cM, F_{\bullet} \cM, S)$ is a polarized Hodge module if it satisfies
several additional conditions that are imposed on the nearby and vanishing cycle
functors with respect to holomorphic functions on open subsets of $X$. An
important point is that the definition is local: if the restriction of $M$ to every
subset in an open covering of $X$ is a polarized Hodge module on that open subset,
then $M$ is a polarized Hodge module on $X$.

\begin{remark}
Note that there are no weights in the simplified definition above. In order to have
an intrinsic notion of weights, one needs to work with triples of the form
\[
	\bigl( (\cM', F_{\bullet} \cM'), (\cM'', F_{\bullet} \cM''), S \bigr),
\]
where $\cM'$ and $\cM''$ are regular holonomic right $\sD_X$-modules with good
filtrations $F_{\bullet} \cM'$ and $F_{\bullet} \cM''$, and where $S \colon \cM'
\otimes_{\C} \cM'' \to \Cur_X$ is a flat sesquilinear pairing. In this formulation, a
polarization is then a certain kind of isomorphism between $\cM'$ and $\cM''$ that is
compatible with the filtrations and the pairing \cite[Ch.~14]{SSMHMproject}.
\end{remark}

We now come to the main definition of this chapter. It is modelled on the definition
of polarized complex Hodge modules \cite[\S14.2]{SSMHMproject}, but with twisted
$\sD$-modules in place of usual ones. This works because being a polarized complex
Hodge module is a local condition.

Let $X$ be a complex manifold, $L$ a holomorphic line bundle on $X$, and $\alpha \in
\R$ a real number. We again consider objects of the type $(\cM, F_{\bullet} \cM, S)$,
where $\cM$ is a right $\sD_{X, \alpha L}$-module with a good filtration $F_{\bullet}
\cM$, and where 
\[
	S \colon \cM \otimes_{\C} \wbar{\cM} \to \Cur_{X, \alpha L}
\]
is a flat hermitian pairing on $\cM$. If $(U, \phi)$ is a local trivialization of the
line bundle $L$, the restriction $(\cM, F_{\bullet} \cM, S) \vert_U$ to the open
subset $U$ becomes, via the isomorphism in \eqref{eq:phi-inv-ast}, a usual filtered
$\sD_U$-module with a flat hermitian pairing. 

\begin{definition}
	We say that an object $(\cM, F_{\bullet} \cM, S)$ is an \emph{$\alpha L$-twisted
	polarized Hodge module} if, for any local trivialization $(U, \phi)$,
	the restriction $(\cM, F_{\bullet} \cM, S) \vert_U$ is a polarized complex Hodge
	module in the usual sense.
\end{definition}

Because of the local nature of the definition, all local properties of polarized
complex Hodge modules (such as existence of a decomposition by strict support or the
strictness of morphisms) immediately carry over to the twisted setting
\cite[\S14.2]{SSMHMproject}. 

\subsection{Direct images and the decomposition theorem}\label{sec: direct images of twisted D modules}

Another important difference between twisted $\sD$-modules and usual $\sD$-modules is
that one cannot take the direct image of a twisted $\sD$-module (or twisted Hodge module)
along a proper morphism $f \colon X \to Y$ unless the line bundle $L$ is pulled back
from $Y$.

Let us start with a few remarks about the direct image functor for twisted
$\sD$-modules. Let $f \colon X \to Y$ be a proper holomorphic mapping between complex
manifolds, and let $L$ be a holomorphic line bundle on $Y$, viewed as a complex
manifold of dimension $\dim Y + 1$ via the bundle projection $p \colon L \to Y$. Let
$L_X = X \times_Y L$ be the pullback of the line bundle to $X$, as in the commutative
diagram below.
\[
	\begin{tikzcd}
		L_X \rar \dar & L \dar{p} \\
		X \rar{f} & Y
	\end{tikzcd}
\]
Let $\alpha \in \R$. As in the untwisted case \cite[8.6.4]{SSMHMproject}, we
introduce the transfer module 
\[
	\sD_{X \to Y, \alpha L}
	= \sO_X \otimes_{f^{-1} \sO_Y} f^{-1} \sD_{Y, \alpha L}.
\]
This is an $(\sD_{X, \alpha L_X}, f^{-1} \sD_{Y, \alpha L})$-bimodule: the right
$f^{-1} \sD_{Y, \alpha L}$-module structure is the obvious one, and the left
$\sD_{X, \alpha L_X}$-module structure is induced by the morphism
\[
	F_1 \sD_{X, \alpha L_X} \to f^{\ast} F_1 \sD_{Y, \alpha L}
	= \sO_X \otimes_{f^{-1} \sO_Y} f^{-1} F_1 \sD_{Y, \alpha L},
\]
which is part of the following commutative diagram:
\[
	\begin{tikzcd}[column sep=small] 
		0 \rar& \sO_X \dar[equal] \rar& F_1 \sD_{X, \alpha L_X} \dar \rar& \sT_X \dar \rar& 0 \\
		0 \rar& \sO_X \rar& f^{\ast} F_1 \sD_{Y, \alpha L} \rar& f^{\ast} \sT_Y \rar& 0
	\end{tikzcd}
\]
We can then define the direct image functor
\[
	f_+ \colon \Dbcoh(\sD_{X, \alpha L_X}) \to \Dbcoh(\sD_{Y, \alpha L})
\]
from the derived category of right $\sD_{X, \alpha L_X}$-modules to that of right
$\sD_{Y, \alpha L}$-modules as
\[
	f_+(\argbl) = \derR \fl \bigl( \argbl \otimes_{\sD_{X, \alpha L_X}}^{\mathbf{L}} \sD_{X \to Y,
	\alpha L} \bigr),
\]
which is essentially the same formula as in the untwisted case
\cite[\S8.7]{SSMHMproject}. If $\phi \colon L
\vert_U \to U \times \CC$ is a local trivialization of $L$, then we get an induced
trivialization of $L_X$ over the open subset $f^{-1}(U)$, and so twisted
$\sD$-modules on $U$ and $f^{-1}(U)$ are the same thing as usual $\sD$-modules. It is
then easy to see that the diagram
\[
	\begin{tikzcd}
		\Dbcoh(\sD_{X, \alpha L_X}) \dar{f_+} \rar & \Dbcoh(\sD_{f^{-1}(U)}) \dar{f_+} \\
		\Dbcoh(\sD_{Y, \alpha L}) \rar & \Dbcoh(\sD_U)
	\end{tikzcd}
\]
is commutative, where the horizontal arrows are restriction to the open subsets $U$
and $f^{-1}(U)$ and $f_+ \colon \Dbcoh(\sD_{f^{-1}(U)}) \to \Dbcoh(\sD_U)$ is the
usual direct image functor for right $\sD$-modules. By the same method as in
\cite[\S8.7]{SSMHMproject}, the definition of the direct image functor can be extended to
filtered $\sD$-modules (using the natural filtration on the transfer module), and as
in \cite[\S12]{SSMHMproject}, a flat hermitian pairing $S \colon \cM \otimes_{\C}
\overline{\cM} \to \Cur_{X, \alpha L_X}$ on an $\alpha L_X$-twisted $\sD$-module
induces flat sesquilinear pairings
\[
	S_i \colon \sH^i f_{+} \cM \otimes_{\C} \overline{\sH^{-i} f_{+} \cM} \to
	\Cur_{Y, \alpha L}.
\]
We can now state the decomposition theorem for twisted Hodge modules.

\begin{thm}\label{thm: decomposition for twisted Hodge modules}
	Let $f \colon X \to Y$ be a projective morphism between complex manifolds,
	let $L$ be a holomorphic line bundle on $Y$, and set $L_X = f^{\ast} L$. If $(\cM,
	F_{\bullet} \cM, S)$ is an $\alpha L_X$-twisted polarized Hodge module on $X$, then each
	\[
		\sH^i f_{+}(\cM, F_{\bullet} \cM)
	\]
	with the induced polarization, is an $\alpha L$-twisted polarized Hodge module on
	$Y$. Moreover, the decomposition theorem
	\[
		f_{+}(\cM, F_{\bullet} \cM) \cong \bigoplus_{i \in \ZZ} \sH^i f_{+}(\cM,
		F_{\bullet} \cM)
	\]
	holds in the derived category of filtered twisted $\sD_Y$-modules.
\end{thm}

\begin{proof}
	Locally on $Y$, the direct image functor for twisted $\sD$-modules agrees with the
	usual direct image functor for $\sD$-modules. All the local assertions in the
	theorem therefore follow from \cite[\S14.3]{SSMHMproject}, and in particular, each
	$\sH^i f_{+}(\cM, F_{\bullet} \cM)$ is strict. Let $\omega$ be the first Chern
	class of a relatively ample line bundle. From the relative Hard Lefschetz theorem
	for complex Hodge modules, applied locally, it follows that
	\[
		\omega^i \colon \sH^{-i} f_{+}(\cM, F_{\bullet} \cM) \to \sH^i f_{+}(\cM,
		F_{\bullet-i} \cM)
	\]
	is an isomorphism for every $i \geq 0$. Just as in the untwisted case, this
	implies the decomposition theorem. The relative Hard Lefschetz theorem also gives
	us a representation of the Lie algebra $\sltwo$ on the direct sum of all the
	$\sH^i f_{+} \cM$, and we again let $\wsl$ denote the corresponding Weil element.
	Then \cite[\S14.3]{SSMHMproject}, applied locally, shows that the flat hermitian
	pairing
	\[
		(-1)^{i(i-1)/2} S_i \circ (\id \otimes \wsl) \colon
			\sH^i f_{+} \cM \otimes_{\C} \overline{\sH^i f_{+} \cM} 
			\to \Cur_{Y, \alpha L}
	\]
	polarizes $\sH^i f_{+}(\cM, F_{\bullet} \cM)$, which is therefore a twisted
	polarized Hodge module.
\end{proof}

\subsection{Kashiwara's equivalence}

Let $i \colon Y \into X$ be the inclusion of a closed submanifold. Let $L$ be a holomorphic line bundle on $X$ and set $L_Y=i^{\ast}L$. For any $\alpha\in
\R$, there is a direct image functor (see \S \ref{sec: direct images of twisted D modules}) 
\[ 
	i_{+}: \{ \text{$\alpha L_Y$-twisted Hodge modules on $Y$}  \} \to 
		\{ \text{$\alpha L$-twisted Hodge modules on $X$}  \}.
\]

\begin{thm}\label{thm: twisted Kashiwara equivalence for Hodge modules}
The functor $i_{+}$ induces an equivalence between the category of $\alpha
L_Y$-twisted polarized Hodge modules on $Y$ and the category of $\alpha L$-twisted polarized
Hodge modules on $X$ whose support is contained in the submanifold $Y$.
\end{thm}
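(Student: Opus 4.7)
The plan is to reduce the statement to the untwisted (i.e., $\alpha = 0$) Kashiwara equivalence for polarized complex Hodge modules, by exploiting the local nature of the definition of twisted polarized Hodge modules. Concretely, I would proceed as follows.

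First, I would invoke the Beilinson-Bernstein result (\theoremref{thm: twisted Kashiwara equivalence}) to obtain the desired equivalence at the level of coherent twisted $\sD$-modules. The only remaining task is to show that, under this equivalence, the subcategory of objects underlying $\alpha L_Y$-twisted polarized Hodge modules on $Y$ corresponds bijectively to the subcategory of objects underlying $\alpha L$-twisted polarized Hodge modules on $X$ whose support lies in $Y$. This amounts to checking that (a) $i_{+}$ sends good filtrations and flat hermitian pairings on $Y$ to good filtrations and flat hermitian pairings on $X$, (b) the Hodge module conditions (on nearby/vanishing cycles and polarization) are preserved in both directions, and (c) the quasi-inverse of $i_{+}$ at the $\sD$-module level can be upgraded to a filtered, sesquilinear quasi-inverse.

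Second, I would argue that all of this is local on $X$, since the definition of an $\alpha L$-twisted polarized Hodge module is local by construction. Choose a local trivialization $(U, \phi)$ of $L$, which induces a trivialization $(i^{-1}(U), \phi|_Y)$ of $L_Y$ on $Y \cap U$. Under the isomorphisms of \eqref{eq:phi-inv-ast}, the restriction $(\cM, F_{\bullet} \cM, S)|_U$ becomes a usual filtered $\sD_U$-module with a flat hermitian pairing, and similarly for objects on $Y$. The direct image functor $i_{+}$ is compatible with these trivializations, because $i_{+}$ for twisted $\sD$-modules agrees locally with the usual $i_{+}$ for $\sD$-modules (this can be seen from the diagram for direct images along $f \colon X \to Y$ given above, applied to the closed embedding $i$). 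Therefore, over $U$, the statement reduces to the Kashiwara equivalence for polarized complex Hodge modules, which is proved in \cite[\S 14]{SSMHMproject}: the functor $i_{+}$ induces an equivalence between polarized complex Hodge modules on $Y \cap U$ and those on $U$ supported in $Y \cap U$, and the quasi-inverse is provided by the usual cohomological restriction functor (strictly speaking, by $i^{!}$ combined with the extraction of a single cohomology sheaf, which is strict by Kashiwara's theorem for filtered $\sD$-modules).

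Third, I would glue these local equivalences. Essential surjectivity: given an $\alpha L$-twisted polarized Hodge module $(\cN, F_\bullet \cN, S_X)$ on $X$ supported in $Y$, apply the local quasi-inverse in each trivialization to produce local filtered sesquilinear data on $Y$; the local pieces agree on overlaps because both the underlying $\sD$-module equivalence (Theorem \ref{thm: twisted Kashiwara equivalence}) and the filtration/pairing constructions are canonical, i.e.~do not depend on the trivialization chosen. The result is a global $\alpha L_Y$-twisted filtered $\sD$-module with a flat hermitian pairing, and the Hodge module axioms hold because they hold in each trivialization. Full faithfulness follows because morphisms of twisted polarized Hodge modules are the same as morphisms of the underlying filtered twisted $\sD$-modules compatible with the pairings, and $i_+$ is fully faithful on such morphisms by the local reduction.

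The main obstacle I anticipate is purely bookkeeping: verifying that the canonical quasi-inverse constructed locally on each trivializing open set patches together to a global functor valued in $\alpha L_Y$-twisted objects, with the correct transition behavior $\abs{g}^{-2\alpha}$ on the currents. This follows from the compatibility of Beilinson-Bernstein's twisted Kashiwara equivalence with changes of trivialization (which behave by \eqref{eq:vf-twisted}), together with the fact that $\Cur_{X, \alpha L}$ restricts along $i$ to $\Cur_{Y, \alpha L_Y}$ in the expected way. Once these compatibilities are spelled out, no further Hodge-theoretic input is needed beyond what is already in \cite[\S 14]{SSMHMproject}.
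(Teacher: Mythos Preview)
Your proposal is correct and follows essentially the same approach as the paper: invoke the Beilinson--Bernstein twisted Kashiwara equivalence at the $\sD$-module level, then use the local nature of the definition of twisted polarized Hodge modules to reduce to the strict Kashiwara equivalence for (untwisted) polarized complex Hodge modules from \cite{SSMHMproject}. The paper compresses this into a single sentence citing \cite[Proposition 9.6.2]{SSMHMproject}, whereas you spell out the local-trivialization and gluing steps explicitly; your more detailed account is a faithful unpacking of the same argument.
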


\begin{proof}
	 This follows from the twisted Kashiwara's equivalence  \cite[\S 2.5.5(iv)]{BB93} and the strict Kashiwara's equivalence for complex polarized Hodge
	 modules \cite[Proposition 9.6.2]{SSMHMproject}.  
\end{proof}

\subsection{Twisted variations of Hodge structure}
\label{sec: twisted structure}

Twisted Hodge modules are closely related to twisted variations of Hodge structure.
Our discussion here will be very brief, because everything follows in the expected
way from the local theory for usual polarized Hodge modules. Let $X$ be a complex
manifold, $L$ a holomorphic line bundle, and $\alpha \in \RR$ a real number. Recall
that the extension class of
\[
	\begin{tikzcd}[column sep=small] 
		0 \rar & \OX \rar& F_1 \Dmod_{X, \alpha L} \rar{p} & \shT_X \rar & 0
	\end{tikzcd}
\]
is $\alpha \cdot c_1(L) \in H^1(X, \OmX^1)$. In the sheaf of $\OX$-algebras
$\Dmod_{X, \alpha L}$, the commutator between a local section $f \in \OX$ and a local
section $\xi \in F_1 \Dmod_{X, \alpha L}$ is 
\[
	[\xi, f] = p(\xi) f,
\]
where the right-hand side means the action of the vector field $p(\xi) \in \shT_X$
on the function $f \in \OX$. Similarly, the commutator of two local sections $\xi,
\eta \in F_1 \Dmod_{X, \alpha L}$ is another local section $[\xi, \eta] \in F_1
\Dmod_{X, \alpha L}$, and this defines a bracket operation
\[
	\lbrack \argbl, \argbl \rbrack \colon F_1 \Dmod_{X, \alpha L} \tensor F_1
	\Dmod_{X, \alpha L} \to F_1 \Dmod_{X, \alpha L}
\]
that restricts to the usual Lie bracket on $\shT_U$ in any local trivialization $(U,
\phi)$ of $L$. We use this to define twisted flat connections. If $\shE$ is an
$\OX$-module, we use the right $\OX$-module structure on $F_1 \Dmod_{X, \alpha L}$ to
define the tensor product
\[
	F_1 \Dmod_{X, \alpha L} \tensor_{\OX} \shE;
\]
it becomes an $\OX$-module using the left $\OX$-module structure on $F_1 \Dmod_{X,
\alpha L}$. 

\begin{definition}
Let $\shE$ be a locally free sheaf of $\OX$-modules. An $\alpha L$-twisted flat
connection on $\shE$ is an $\OX$-linear morphism
\[
	\nabla \colon F_1 \Dmod_{X, -\alpha L} \tensor_{\OX} \shE \to \shE,
	\quad \xi \tensor s \mapsto \nabla_{\xi} s,
\]
that extends the natural morphism $\OX \tensor_{\OX} \shE \to \shE$ and satisfies the
identity
\[
	\nabla_{\xi} \nabla_{\eta} s - \nabla_{\eta} \nabla_{\xi} s = \nabla_{[\xi,\eta]} s
\]
for arbitrary local sections $s \in \shE$, and $\xi, \eta \in F_1 \Dmod_{X, -\alpha L}$.
\end{definition}

Linearity over $\OX$ means that we have
\[
	\nabla_{\xi \cdot f}(s) = \nabla_{\xi}(fs) \quad \text{and}
	\quad \nabla_{f \xi}(s) = f \cdot \nabla_{\xi} s
\]
for local sections $f \in \OX$, $s \in \shE$, and $\xi \in F_1 \Dmod_{X, -\alpha L}$.
This implies the usual Leibniz rule
\[
	\nabla_{\xi}(fs) - f \cdot \nabla_{\xi} s = [\xi, f] \cdot s.
\]
The reason for the minus sign in the definition is explained by the following 
lemma.

\begin{lemma}
	Let $\nabla$ be an $\alpha L$-twisted flat connection on a locally free
	$\OX$-module $\shE$. Then the tensor product $\omega_X \tensor_{\OX} \shE$ is
	naturally a right module over $\Dmod_{X, \alpha L}$. 
\end{lemma}

\begin{proof}
	Let $(U, \phi)$ be a local trivialization, and let $x_1, \dotsc, x_n$ be local
	coordinates. Just as in the untwisted case, we define
	\[
		\bigl( \dx_1 \wedge \dotsm \wedge \dx_n \tensor s \bigr) \cdot \partial_j
		= -\dx_1 \wedge \dotsm \wedge \dx_n \tensor \nabla_{\partial_j} s
	\]
	for any local section $s \in \shE$. It is then an easy matter to check that this
	puts a well-defined right $\Dmod_{X, \alpha L}$-module structure on $\omega_X
	\tensor \shE$. It is the minus sign in the conversion from left to right
	$\Dmod$-modules that is responsible for the change from $-\alpha$ to $\alpha$.
\end{proof}

A right $\Dmod_X$-module comes from a vector bundle with flat connection if and only if
it is coherent as an $\OX$-module. By looking at local trivializations, this
immediately implies the analogous result for twisted $\Dmod$-modules.

\begin{lemma}
	Let $\Mmod$ be a right $\Dmod_{X, \alpha L}$-module. Then $\Mmod$ is coherent over
	$\OX$ if and only if $\Mmod \cong \omega_X \tensor \shE$ for a locally free
	$\OX$-module $\shE$ with an $\alpha L$-twisted flat connection.
\end{lemma}

An $\alpha L$-twisted polarized variation of Hodge structure consists of a locally
free sheaf $\shE$ with an $\alpha L$-twisted flat connection $\nabla$; a finite
decreasing Hodge filtration $F^{\bullet} \shE$ such that
\[
	\nabla(F_1 \Dmod_{X, -\alpha L} \tensor F^p \shE) \subseteq F^{p-1} \shE,
\]
and an $\alpha L$-twisted flat pairing
\[
	S \colon \shE \tensor \wbar{\shE} \to L_{\alpha}
\]
that is hermitian symmetric and satisfies $S \bigl( \nabla_{\xi} s', s'' \bigr) = \xi
\cdot S(s', s'')$ for arbitrary local sections $\xi \in F_1 \Dmod_{X, -\alpha L}$ and
$s',s'' \in \shE$. At each point $x \in X$, this data needs to give a polarized Hodge
structure on the complex vector space $\shE_x$. This definition only makes sense for complex
variations of Hodge structure, because there are no ``twisted local systems'' (at
least not as objects on $X$). We define the associated twisted
Hodge module $(\Mmod, F_{\bullet} \Mmod, S)$ as in the untwisted case: the twisted
$\Dmod$-module is $\Mmod = \omega_X \tensor \shE$; the Hodge filtration is $F_k \Mmod
= \omega_X \tensor F^{-k-n} \shE$; and the polarization is defined by the rule
\[
	\bigl\langle S(\omega' \tensor s', \omega'' \tensor s''), \varphi \bigr\rangle
	= \frac{(-1)^{n(n+1)/2}}{(2 \pi i)^n} \int_X \varphi S(s',s'') \cdot \omega' \wedge
	\overline{\omega''}
\]
on any $(-\alpha L)$-twisted test function $\varphi$. Note that the twisting in
$\varphi$ and $S(s',s'')$ cancels out and so the integrand is a well-defined
compactly supported $(n,n)$-form.

Let $M = (\Mmod, F_{\bullet} \Mmod, S)$ be a twisted Hodge
module on a complex manifold $X$. We say that it is \emph{smooth} if the twisted
$\Dmod$-module $\Mmod$ is coherent over $\OX$. This implies that $\Mmod \cong
\omega_X \tensor \shE$, where $\shE$ is a locally free $\OX$-module with an $\alpha
L$-twisted flat connection $\nabla$. It then follows from the usual local theory for
polarized Hodge modules that $M$ is the twisted Hodge module associated to a twisted
polarized variation of Hodge structure on $\shE$, by the construction in the previous
paragraph.

\subsection{Structure theorem and generic pullback}
\label{sec: generic pullback}

We say that a twisted Hodge module has strict support equal to an irreducible
analytic subset $Z \subseteq X$ if the support of any nontrivial subquotient
is equal to $Z$. We have the following analogue of Saito's structure theorem
\cite[Ch.~16]{SSMHMproject} for objects with strict support.

\begin{thm} \label{thm: twisted structure}
	Any $\alpha L$-twisted polarized variation of Hodge structure defined on a dense
	Zariski-open subset of $Z \subseteq X$ extends uniquely to an $\alpha L$-twisted
	polarized Hodge module on $X$ with strict support equal to $Z$. Moreover, every
	$\alpha L$-twisted Hodge module with strict support equal to $Z$ arises in this
	way.
\end{thm}

\begin{proof}
	This is a local problem, and so the result follows from the structure theorem for
	polarized Hodge modules \cite[Thm.~16.2.1]{SSMHMproject}.
\end{proof}

There is also a generic pullback for twisted Hodge modules, similar to the untwisted
case. Let $f \colon X \to Y$ be a holomorphic mapping between complex manifolds, and
let $L$ be a holomorphic line bundle on $Y$. We denote by $L_X = \fu L$ its pullback
to $X$. Let $(\Mmod, F_{\bullet} \Mmod, S)$ be an $\alpha L$-twisted Hodge module on
$Y$ with strict support equal to $Y$. Let $Y_0 \subseteq Y$ denote the maximal open
subset over which $\Mmod$ is smooth, meaning coherent as an $\OY$-module; according
to the discussion in the previous section, the
restriction to $Y_0$ is then a twisted polarized variation of Hodge structure $(\shE,
F^{\bullet} \shE, S)$. Set $X_0 = f^{-1}(Y_0)$, and denote by $f_0 \colon X_0 \to
Y_0$ the restriction of $f$.  The pullback of $\shE$ along $f_0$ is naturally a twisted
variation of Hodge structure on $X_0$; by Theorem~\ref{thm: twisted structure}, it
extends uniquely to a twisted Hodge module on $X$ with strict support equal to $X$.
We call this object the generic pullback of the original twisted Hodge module.

\subsection{Kashiwara-Malgrange $V$-filtration}\label{sec: V filtrations}

Before the discussion of nearby and vanishing cycles in the next section, let us  fix the convention for the $V$-filtration. Let $f$ be a holomorphic function on a complex manifold $X$ and let $\cM$ be a regular holonomic $\sD$-module on $X$. Consider the graph embedding $i_f: X \to X\times \C$ with $i_f(x)=(x,f(x))$ and the right $\sD$-module $\cM_f\colonequals(i_f)_{+}\cM$. Let $t$ be the coordinate on $\C$, and $\d_t=\d/\d_t$ the corresponding vector field. As above, inside $\sD_{X\times \C}$, we have the subsheaf $V_{0}\sD_{X\times \C}$ of those differential operators that preserve the ideal sheaf of $X\times \{0\}$; it is generated by $\sD_X$ and the additional two operators $t$ and $t\d_t$.

\begin{definition}\label{definition: V-filtration}
The \emph{Kashiwara-Malgrange $V$-filtration} on $\cM_f$ is an increasing filtration $V_{\bullet}\cM_f$, indexed by $\Q$, with the following properties: (1) each $V_{\alpha}\cM_f$ is coherent over $V_{0}\sD_{X\times \C}$, (2) $V_{\alpha}\cM_f$ is indexed right-continuously and discretely, (3) one has $V_{\alpha}\cM_f \cdot t \subseteq V_{\alpha-1}\cM_f$, with equality for $\alpha<0$, (4) one has $V_{\alpha}\cM_f \cdot \partial_t \subseteq V_{\alpha+1}\cM_f$, (5) the operator $t\d_t-\alpha$ acts nilpotently on $\gr^V_\alpha\cM_f$.
\end{definition}
It is proved by Kashiwara \cite{Kashiwara83} and Malgrange \cite{Malgrange} that the $V$-filtration exists and is unique. Suppose $(\cM,F_{\bullet}\cM,K)$ underlies a $\Q$-mixed Hodge module $M$ where $K\otimes_{\Q}\C=\DR(\cM)$. Set $(\cM_f,F_{\bullet}\cM_f)=(i_f)_{+}(\cM,F_{\bullet}\cM)$. For $-1\leq \alpha<0$, the nearby cycles for the eigenvalue $\lambda=e^{2\pi i \alpha}$ are described by
\begin{equation}\label{eqn: nearby cycle D modules}
\psi_{f,\lambda}M=\left(\gr^V_{\alpha}\cM_f, F_{\bullet-1}\gr^V_{\alpha}\cM_f, \psi_{f,\lambda}K\right), \quad W_{\bullet}\gr^V_{\alpha}\cM_f=W_{\bullet-(n-1)}(N)\gr^V_{\alpha}\cM_f,
\end{equation}
where $n=\dim X$ and $W_{\bullet}(N)$ is the monodromy weight filtration of $N$ and 
\[ F_{\bullet}\gr^V_{\alpha}\cM_f\colonequals \frac{F_{\bullet}\cM_f\cap V_{\alpha}\cM_f}{F_{\bullet}\cM_f\cap V_{<\alpha}\cM_f}.\]
For the unipotent vanishing cycles, this changes to
\begin{equation}\label{eqn: unipotent vanishing cycle D modules}
\phi_{f,1}M=\left(\gr^V_{0}\cM_f, F_{\bullet}\gr^V_{0}\cM_f,\phi_{f,1}K\right), \quad W_{\bullet}\gr^V_{0}\cM_f=W_{\bullet-n}(N)\gr^V_{0}\cM_f.
\end{equation}
These shifts are needed to make the definition work out properly in all cases.

\subsection{Nearby and vanishing cycle for divisors}
\label{sec: vanishing cycle for divisors}
Now let us explain why the nearby and vanishing cycles of a complex Hodge module with respect to a divisor are naturally twisted Hodge modules.  Let $X$ be a complex manifold and let $\cM$ be a regular holonomic right
$\sD_X$-module.  Let $D$ be an effective divisor on $X$, set $L=\cO_X(D)$, and
let $s \in H^0(X,L)$ be a global section such that $\dv(s)=D$. We view $L$
as a complex manifold of dimension $\dim X + 1$, and the section $s$
as a closed embedding $s \colon X \into L$. Let $\cM_L=s_{+}\cM$ be the direct image $\sD$-module on $L$. 

The zero section of $L$ induces a filtration $V_{\bullet}\cM_L$; this is defined
locally as the $V$-filtration in Definition \ref{definition: V-filtration}, but the
resulting filtration on $\cM_L$ is actually globally well-defined \cite[Proposition
1.5]{Budur}. Each $V_{\alpha}\cM_L$ is a sheaf of $V_0 \sD_L$-modules and each
$\gr^V_{\alpha}\cM_L$ is therefore a well-defined $\gr_0^V \sD_L$-module, but unlike
in the local setting, it is not a $\sD_X$-module. We get closer to $\sD$-modules
once we take the associate graded of the weight filtration. Recall from \S \ref{par:tdo}
that the operator $\theta \in F_1\gr^V_0\sD_L$ is globally well-defined and
central, then the properties of $V$-filtration implies that the operator 
\begin{equation}\label{eqn: the global nilponent operator}
	N\colonequals \theta -\alpha
\end{equation} 
acts nilpotently on $\gr^V_{\alpha}\cM_L$. Let $W_{\bullet}(N)\gr^V_{\alpha}\cM$ denote the weight
filtration of this nilpotent operator. Since $\sD_{X, \alpha L} =
\gr_0^V \sD_L / (\theta - \alpha) \gr_0^V \sD_L$, it follows that each subquotient
\[
	\gr_{\ell}^{W(N)} \gr_{\alpha}^V \cM_L
\]
has the structure of an $\alpha L$-twisted right $\sD$-module. 

Now let us suppose that $\cM$ comes with a flat hermitian pairing
\[
	S \colon \cM \otimes_{\C} \overline{\cM} \to \Cur_X
\]
into the sheaf of currents of bidegree $(n,n)$. In that case, we get an
induced flat hermitian pairing on $\gr_{\alpha}^V \cM$, but now valued in the sheaf
of twisted currents.

\begin{lemma} \label{lem:induced-pairing}
	For each $\alpha \in [-1, 0]$, we have an induced flat hermitian pairing
	\[
		S_{\alpha} \colon \gr_{\alpha}^V \cM_L \otimes_{\C} \overline{\gr_{\alpha}^V
		\cM_{L}} \to \Cur_{X, \alpha L}.
	\]
	The operator $\theta \in \End(\gr_{\alpha}^V \cM_{L})$ is self-adjoint with respect to
	$S_{\alpha}$.
\end{lemma}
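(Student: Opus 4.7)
The plan is to construct $S_\alpha$ locally in every trivialization of $L$ and then verify that the local pairings transform in the way prescribed for an $\alpha L$-twisted current. Fix a local trivialization $(U, \phi)$ of $L$; it turns the section $s$ into a holomorphic function $f_\phi \colon U \to \CC$, the closed embedding $s \colon X \into L$ into the graph embedding $i_{f_\phi} \colon U \to U \times \CC_t$, and the $V$-filtration on $\cM_L$ relative to the zero section into the usual Kashiwara--Malgrange $V$-filtration relative to $t$. In this local picture, $S$ induces a flat hermitian pairing on $(i_{f_\phi})_+ \cM|_U$ valued in currents on $U \times \CC_t$ by the standard direct image of sesquilinear pairings along a closed embedding, and I will apply the residue construction for sesquilinear pairings on nearby and vanishing cycles from \cite[Ch.~12]{SSMHMproject}.

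Concretely, for $m, n \in V_\alpha \cM_L|_U$ and a compactly supported test form $\eta$ on $U$, I would set
\[
\bigl\langle S_{\alpha,\phi}([m], \overline{[n]}),\, \eta \bigr\rangle = \Res_{\sigma = \alpha} \bigl\langle S(m, \bar n),\, \chi(t)\, |t|^{2\sigma}\, \pi^\ast \eta \bigr\rangle,
\]
where $\pi \colon U \times \CC_t \to U$ is the projection and $\chi$ is a cut-off equal to $1$ near $t = 0$. By the standard theory this residue is independent of $\chi$ and of representatives modulo $V_{<\alpha}$, descends to $\gr^V_\alpha \cM_L|_U$, and is flat and hermitian symmetric. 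To glue these local pairings, compare with a second trivialization $(U, \phi')$ satisfying $(\phi' \circ \phi^{-1})(x, t) = (x, g(x) t)$; the new fiber coordinate satisfies $t' = g(x) t$, so $|t|^{2\sigma} = |g|^{-2\sigma} |t'|^{2\sigma}$, and since $|g|^{-2\sigma}$ is smooth in $\sigma$ one factors it out and evaluates at $\sigma = \alpha$, obtaining $S_{\alpha,\phi'} = |g|^{2\alpha} S_{\alpha,\phi}$. This is exactly the transformation law defining an $\alpha L$-twisted current, so the local pieces assemble into a global pairing $S_\alpha$ valued in $\Cur_{X, \alpha L}$.

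For self-adjointness of $\theta$, note that locally $\theta$ is represented by $t \partial_t$, and the right $\sD$-module structure gives $S(m \cdot \theta, \bar n) = S(m, \bar n) \cdot t\partial_t$ while $S(m, \overline{n \cdot \theta}) = S(m, \bar n) \cdot \bar t \partial_{\bar t}$. The elementary identities
\[
t \partial_t\, |t|^{2\sigma} = \bar t \partial_{\bar t}\, |t|^{2\sigma} = \sigma \cdot |t|^{2\sigma}
\]
show that after integration by parts both actions contribute the same multiplier $\sigma$ to the residue at $\sigma = \alpha$, yielding equal currents on $U$, and therefore self-adjointness of $\theta$. The main obstacle in this plan is precisely the change-of-trivialization calculation: constructing $S_{\alpha, \phi}$ in a single trivialization is standard material from \cite[Ch.~12]{SSMHMproject}, but tracing how the meromorphic factor $|t|^{2\sigma}$ responds to the transition function $g$ is what pins down the twist parameter and shows that the cocycle of local currents is exactly $|g|^{2\alpha}$, so that the target is $\Cur_{X, \alpha L}$ rather than some other twist.
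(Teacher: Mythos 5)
Your proposal is correct and follows essentially the same route as the paper: the local pairing on $\gr^V_\alpha$ is the standard Mellin-transform/residue construction from \cite[Ch.~12]{SSMHMproject}, and the heart of the matter is exactly the change-of-trivialization computation $t'=gt$, which turns $\abs{t'}^{2s}$ into $\abs{g}^{2s}\abs{t}^{2s}$ and, after evaluating the smooth factor at $s=\alpha$, yields the cocycle $\abs{g}^{2\alpha}$ identifying the target as $\Cur_{X,\alpha L}$. The self-adjointness of $\theta$ is likewise part of the standard local construction (the paper simply cites it), and your sketch via $t\partial_t\abs{t}^{2s}=\bar t\partial_{\bar t}\abs{t}^{2s}=s\abs{t}^{2s}$ is consistent with that.
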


\begin{proof}
	The point is that the construction of the induced pairing on nearby and vanishing
	cycles in \cite{SSMHMproject} transforms correctly from one local trivialization
	of $L$ to another. We will give the proof for $-1 \leq \alpha < 0$, the case
	$\alpha = 0$ being similar.

	Let $\phi \colon L \vert_U \to U \times \CC$ be a local trivialization of the line
	bundle $L$, and denote by $t$ the coordinate function on $\CC$. The restriction of
	$V_{\alpha}\cM_L$ to the open subset $U \times \CC$ is a module over $V_0 \sD_L$, and the
	restriction of $\gr_{\alpha}^V \cM_{L}$ to the open set $U$ is a right $\sD_U$-module.
	Let us quickly review the construction of the induced pairing $S_{\alpha}$, using
	the Mellin transform. Let $\varphi \in A_c(U)$ be a test function with compact
	support in $U$, and let $\eta \colon \CC \to [0,1]$ be a compactly support smooth
	function that is identically equal to $1$ near the origin. Let $m_1, m_2 \in
	\Gamma(U \times \CC, V_{\alpha} \cM_L)$ The expression
	\[
		\Bigl\langle S(m_1, m_2), \abs{t}^{2s} \eta(t) \varphi \Bigr\rangle
	\]
	is a holomorphic function of $s \in \CC$ as long as $\Re s \gg 0$, and extends to
	a meromorphic function on all of $\CC$ (using the properties of the
	$V$-filtration). One can show that the residue at $s = \alpha$ depends
	continuously on the test function $\varphi$, and that the formula
	\[
		\Bigl\langle S_{\alpha}([m_1], [m_2]), \varphi \Bigr\rangle
		= \Res_{s = \alpha} \Bigl\langle S(m_1, m_2), \abs{t}^{2s} \eta(t) \varphi \Bigr\rangle
	\]
	defines a flat hermitian pairing on the right $\sD_U$-module $\gr_{\alpha}^V \cM_{L}
	\vert_U$. Moreover, the endomorphism $\theta = t \partial_t$ is self-adjoint with
	respect to this pairing.

	To prove the lemma, it is enough to check that these pairings transform correctly
	from one local trivialization to another. Let us denote by $C_{(U, \phi)} =
	S_{\alpha}([m_1], [m_2])$ the current constructed above. Let $(U, \phi')$ be a second
	trivialization such that
	\[
		(\phi' \circ \phi^{-1})(x,t) = \bigl( x, g(x) t \bigr) \quad \text{and $g \in
		\Gamma(U, \sO_X^{\times})$.}
	\]
	If we let $t'$ be the resulting holomorphic coordinate on $\CC$, we have $t' = g
	t$. The same formula as above then defines a second current
	\[
		\bigl\langle C_{(U, \phi')}, \varphi \bigr\rangle
		= \Res_{s = \alpha} \Bigl\langle S(m_1, m_2), \abs{t'}^{2s} \eta(t') \varphi \Bigr\rangle
	\]
	Since $g$ is holomorphic, the right-hand side evaluates to
	\begin{align*}
		\Res_{s = \alpha} \Bigl\langle S(m_1, m_2), \abs{g}^{2s} \abs{t}^{2s} \eta(g t)
			\varphi \Bigr\rangle &= \Res_{s = \alpha} \Bigl\langle
		S(m_1, m_2), \abs{g}^{2 \alpha} \abs{t}^{2s} \eta(g t) \varphi \Bigr\rangle \\
		&= \bigl\langle C_{(U, \phi)}, \abs{g}^{2 \alpha} \varphi \bigr\rangle,
	\end{align*}
	and so we arrive at the identity $C_{(U, \phi')} = C_{(U, \phi)} \cdot \abs{g}^{2
	\alpha}$. This is enough to conclude that the currents on each local
	trivialization glue together into an $\alpha L$-twisted current on $X$.
\end{proof}

As long as $\alpha \in \RR$, the nilpotent endomorphism $N = \theta - \alpha$ is
self-adjoint with respect to the pairing $S_{\alpha}$, and so the pairing descends to
the graded quotients of the weight filtration. This gives us flat sesquilinear
pairings
\[
	S_{\alpha} \colon \gr_{\ell}^{W(N)} \gr_{\alpha}^V \cM_L \otimes_{\C}
	\overline{\gr_{-\ell}^{W(N)} \gr_{\alpha}^V \cM_L} \to \Cur_{X, \alpha L}.
\]
We need one extra piece of data to describe the induced polarization. The direct sum
\[
	\bigoplus_{\ell \in \ZZ} \gr_{\ell}^{W(N)} \gr_{\alpha}^V \cM_L
\]
carries a representation of the Lie algebra $\sltwo$. If we denote the three
generators by 
\[
	\Hsl = \begin{pmatrix} 1 & 0 \\ 0 & -1 \end{pmatrix}, \quad
	\Xsl = \begin{pmatrix} 0 & 1 \\ 0 & 0 \end{pmatrix}, \quad
	\Ysl = \begin{pmatrix} 0 & 0 \\ 1 & 0 \end{pmatrix},
\]
then $\Hsl$ acts as multiplication by $\ell$ on $\gr_{\ell}^{W(N)}$, and $\Ysl$ acts
as $N = \theta - \alpha$. The Weil element
\[
	\wsl = \begin{pmatrix} 0 & 1 \\ -1 & 0 \end{pmatrix} \in \operatorname{SL}_2(\CC)
\]
has the property that $\wsl \Hsl \wsl^{-1} = -\Hsl$, $\wsl \Xsl \wsl^{-1} = -\Ysl$,
and $\wsl \Ysl \wsl^{-1} = -\Xsl$, and therefore determines an isomorphism between
$\gr_{-\ell}^{W(N)}$ and $\gr_{\ell}^{W(N)}$. It follows that
\[
	S_{\alpha} \circ (\id \otimes \wsl) \colon
	\gr_{\ell}^{W(N)} \gr_{\alpha}^V \cM_L \otimes_{\C}
	\overline{\gr_{\ell}^{W(N)} \gr_{\alpha}^V \cM_L} \to \Cur_{X, \alpha L}
\]
is a flat hermitian pairing on the indicated $\alpha L$-twisted $\sD$-module.
We can use this to show that the nearby and vanishing cycles of a complex Hodge
module are naturally twisted Hodge modules.

\begin{prop}\label{prop: vanishing cycles of Hodge modules}
	Let $(\cM, F_{\bullet} \cM, S)$ be a  polarized complex Hodge module on $X$. For any
	real number $\alpha \in [-1,0)$ and any integer $\ell \in \ZZ$, the object
	\[
		\left( \gr_{\ell}^{W(N)} \gr_{\alpha}^V \cM_L, F_{\bullet-1} \gr_{\ell}^{W(N)}
		\gr_{\alpha}^V \cM_L, S_{\alpha} \circ (\id \otimes \wsl) \right)
	\]
	is a polarized $\alpha L$-twisted Hodge module on $X$. The object
	\[
		\left( \gr_{\ell}^{W(N)} \gr_0^V \cM_L, F_{\bullet} \gr_{\ell}^{W(N)}
		\gr_0^V \cM_L, S_0 \circ (\id \otimes \wsl) \right)
	\]
	is a polarized complex Hodge module (without any twisting).
\end{prop}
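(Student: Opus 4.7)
The key observation is that being a polarized complex Hodge module, twisted or not, is a purely local condition, so it suffices to verify the assertion after restricting to any open subset $U \subseteq X$ over which the line bundle $L$ admits a trivialization $\phi \colon L\vert_U \to U \times \C$. Such a trivialization produces a local defining equation $f$ for $D\vert_U$, and the composition of the section $s \colon U \to L\vert_U$ with $\phi$ is exactly the graph embedding $i_f \colon U \to U \times \C$ from \S\ref{subsec: nearby and vanishing cycles}. Via the isomorphism \eqref{eq:phi-inv-ast}, the twisted $\sD_{X,\alpha L}$-module $\cM_L\vert_U$ is identified with the usual $\sD_{U \times \C}$-module $\cM_f$, and under this identification $V_{\bullet}\cM_L\vert_U$ corresponds to the Kashiwara--Malgrange $V$-filtration of $\cM_f$.

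Under this local identification the data in the statement become exactly the nearby cycles data \eqref{eqn: nearby cycle D modules} (for $\alpha \in [-1,0)$) and the unipotent vanishing cycles data \eqref{eqn: unipotent vanishing cycle D modules} (for $\alpha = 0$): the Hodge filtration shift by $-1$ in the nearby cycle case and the absence of a shift in the unipotent vanishing cycle case match the conventions there, and the weight filtration assignment $W_{\bullet}(N)$ together with the polarization twist by the Weil element $\wsl$ is precisely the prescription used in the local theory. The fact that these data do assemble into a polarized complex Hodge module on $U$ is then exactly the main local result on nearby and vanishing cycles for polarized complex Hodge modules \cite[Ch.~14]{SSMHMproject}. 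Consequently, in the case $\alpha = 0$ (which carries no twisting because $0 \cdot L = 0$), what we obtain globally is an honest polarized complex Hodge module on $X$, as asserted.

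The only thing beyond applying the local theory is to verify that the globally defined pieces of data genuinely assemble as an $\alpha L$-twisted object for $\alpha \in [-1,0)$. The filtered $\sD_{X,\alpha L}$-module structure on $\gr^{W(N)}_{\ell}\gr^V_{\alpha}\cM_L$ is already available from \S\ref{sec: vanishing cycle for divisors}: the operator $\theta$ is a globally well-defined central section of $F_1\gr^V_0\sD_L$, so the quotient by $\theta - \alpha$ is globally a module over $\sD_{X,\alpha L}$, and the weight filtration of $N = \theta - \alpha$ is intrinsic. The global hermitian pairing valued in $\Cur_{X,\alpha L}$ is supplied by Lemma~\ref{lem:induced-pairing}, which was established precisely by checking that the Mellin-transform construction of the induced pairing transforms under a change of trivialization by $\abs{g}^{2\alpha}$. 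Since $\alpha \in \R$, the self-adjointness of $\theta$ forces $N$ to be self-adjoint, so $S_{\alpha}$ descends to the $\gr^{W(N)}_{\bullet}$ pieces, and composing with $\wsl$ yields a flat hermitian pairing on each $\gr^{W(N)}_{\ell}\gr^V_{\alpha}\cM_L$.

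The main obstacle, already resolved in Lemma~\ref{lem:induced-pairing}, is the global consistency of the induced pairing across trivializations; the remaining local polarization statement is a direct appeal to the complex Hodge module theory of \cite{SSMHMproject}. Combining these inputs, the triple in the proposition satisfies the defining conditions of an $\alpha L$-twisted polarized Hodge module on every local trivialization, and is therefore a polarized twisted Hodge module on $X$ by the local-to-global principle.
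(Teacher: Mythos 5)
Your proof is correct and follows essentially the same route as the paper: since the definition of a (twisted) polarized complex Hodge module is local, one checks that in any local trivialization of $L$ the triple becomes exactly the nearby cycles (for $\alpha\in[-1,0)$, with the shift $F_{\bullet-1}$) or unipotent vanishing cycles (for $\alpha=0$) of $(\cM,F_{\bullet}\cM,S)$, which are polarized complex Hodge modules by \cite[Ch.~14]{SSMHMproject}, the global pairing having already been handled in Lemma~\ref{lem:induced-pairing}. The paper's own argument is just a terser version of this same observation.
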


\begin{proof}
	The claim is that both objects are polarized complex Hodge modules in any local
	trivialization of $L$. This follows from the definition of polarized complex Hodge
	modules, because in any local trivialization, the two objects are exactly the
	nearby cycles and unipotent vanishing cycles of $(\cM, F_{\bullet} \cM, S)$. 
\end{proof}

\subsection{Vanishing theorems for twisted Hodge modules}\label{sec: vanishing theorems for twisted Hodge modules}
We prove the Kodaira-Saito vanishing theorem for twisted Hodge modules. Let $L$ be a line bundle on $X$, $\alpha \in \Q$ and let $(\cM,F_{\bullet}\cM)$ be a filtered $\alpha L$-twisted $\sD$-module with a good filtration (see \S \ref{sec: good filtrations}).
\begin{definition}\label{definition: non-characteristic for twisted D modules}
We say that $f$ is \emph{non-characteristic} for the filtered twisted $\sD$-module $(\cM,F_{\bullet}\cM)$ if for every open subset $U\subseteq X$ trivializing $L$, the induced morphism $f^{-1}(U)\to U$ is non-characteristic for the filtered $\sD$-module $(\cM,F_{\bullet}\cM)|_U$ in the sense of \cite[\S 3.5.1]{Saito88}.
\end{definition}

\begin{lemma}\label{lemma: noncharacteristic pullback for complex Hodge modules}
    Let $M$ be a $\alpha L$-twisted polarized Hodge module on $X$ with strict support $Z$. If $f:Y\to X$ is non-characteristic for the underlying twisted $\sD$-module $(\cM,F_{\bullet}\cM)$. Then we have isomorphisms
    \[ \omega_{Y/X}\otimes f^{\ast}\cM\xrightarrow{\sim} \cM_Y, \quad \omega_{Y/X}\otimes f^{\ast}F_{\bullet}\cM\xrightarrow{\sim} F_{\bullet}\cM_Y,\]
    where $(\cM_Y,F_{\bullet}\cM_Y)$ underlies an $\alpha L_Y$-twisted polarized Hodge module with strict support $f^{-1}(Z)$ and $L_Y=f^{\ast}L$.
\end{lemma}

\begin{proof}
    The proof proceeds in the same way as in  \cite[Theorem 9.3]{Schnell16}. \end{proof}
 
\begin{proof}[Proof of Theorem \ref{thm: vanishing theorem for twisted Hodge module}]
Write $\alpha=p/m$ with $\mathrm{gcd}(p,m)=1$. By a result of Bloch and Gieseker (see \cite{BG71} or \cite[Proposition 2.67]{KM98}), there is 
\begin{equation}\label{eqn: the condition of Bloch-Gieseker covering}\tag{$\ast$}
    \textrm{a finite flat morphism $f:Y\to X$, with $Y$ smooth projective,  $f^{\ast}L=mL_Y$}.
\end{equation} 
Following the proof \cite[Theorem 4.1.10]{Laz04I}, it is not hard to see that one can choose $f$ to be non-characteristic for $(\cM,F_{\bullet}\cM)$.  With such a choice of $f$, by Lemma \ref{lemma: noncharacteristic pullback for complex Hodge modules} and $\alpha f^{\ast}L=pL_Y$, one knows that the pullback 
\[(\cM_Y,F_{\bullet}\cM_Y)\colonequals \omega_{Y/X}\otimes f^{\ast}(\cM,F_{\bullet}\cM)\]
underlies a $pL_Y$-twisted polarized Hodge module with strict support $f^{-1}(Z)$  and 
\[ \gr^F_{k}\DR(\cM_Y)\otimes f^{\ast}\cO_Z(B)=\omega_{Y/X}\otimes f^{\ast}(\gr^F_{k}\DR(\cM)\otimes \cO_Z(B)).\]
Since pulling back by finite map induces injective map on cohomology groups  \cite[Lemma 4.1.14]{Laz04I}, it suffices to prove the vanishing of 
\[ \mathbb{H}^i\left(Y,\gr^F_{k}\DR(\cM_Y)\otimes f^{\ast}\cO_Z(B)\right)=\mathbb{H}^i\left(Y,\gr^F_{k}\DR(\cM_Y\otimes L_Y^{-p}))\otimes f^{\ast}\cO_Z(B)\otimes L_Y^{p}\right).\]
Since $p$ is an integer, Lemma \ref{lemma: canonical isomorphism of grV0DL and L tensor grV0DL} implies that $(\cM_Y,F_{\bullet}\cM_Y)\otimes L_Y^{-p}$ underlies a polarized complex Hodge module with strict support $f^{-1}(Z)$. Note that the line bundle 
\[f^{\ast}\cO_Z(B)\otimes L_Y^{p}|_{f^{-1}(Z)}=f^{\ast}\cO_Z(B+\alpha D|_Z)\]
is ample by assumption and finiteness of $f$, therefore the desired vanishing follows from Kodaira-Saito's vanishing theorem for complex Hodge modules \cite[Theorem 16.3.10]{SSMHMproject}.

If $\Omega^1_X$ is trivial, one can argue as in the proof of \cite[Lemma 2.5]{Schnellgenericvanishing}: since the Hodge filtration on $\cM$ is bounded from below, one has $\gr^F_k\cM=0$ for $k\ll 0$. Inductively for each $k$, there is a distinguished triangle
\[ E_k \to \gr^F_k\DR(\cM) \to \gr^F_{k}\cM \to E_k[1],\]
where $E_k\in D^b_{\textrm{coh}}(\cO_Z)$ is an object satisfying $\mathbb{H}^i(Z,E_k\otimes \cO_Z(B))=0$ for $i>0$.
\end{proof}

\subsection{Arakelov inequalities for twisted Hodge modules}
We need the following results for the proof of Theorem \ref{thm: partial solution of Casalaina Martin conjecture}.  Let $C$ be a smooth projective curve of genus $g(C)$.
\begin{lemma}\label{lemma: degree bound of lowest Hodge module}
Let $\cM$ be a right $\sD$-module underlying a polarized complex Hodge module with strict support $C$. Let $p$ be the smallest integer such that $F_p\cM\neq 0$. Then 
\[\deg F_p\cM \geq 2g(C)-2.\]
\end{lemma}

\begin{proof}
Let $\cV$ be the polarized complex variation of Hodge structures on some open subset $C_0\subseteq C$, associated to $\cM$. Then $F_p\cM$ is locally free and $F_p\cM=\omega_C\otimes \cV'$, where $\cV'$ is the extension of the smallest piece $F^{-p}\cV$ of the Hodge filtration on $\cV$ from $C_0$ to $C$ in Saito's theory. By \cite{Peters00}, one has $\deg \cV'\geq 0$. As $\mathrm{rank}(\cV')\geq 1$, we have
\[\deg F_p\cM=\mathrm{rank}(\cV')\cdot(2g(C)-2)+\deg \cV'\geq 2g(C)-2.\qedhere\]
\end{proof}

\begin{corollary}\label{cor: degree bound of lowest twisted Hodge module}
Let $D$ be an effective divisor on $C$ and set $L=\cO_C(D)$. Let $\cM$ be an $\alpha L$-twisted $\sD$-module underlying a twisted Hodge module with strict support $C$. Let $p$ be the smallest integer such that $F_p\cM\neq 0$, then \[ \deg F_p\cM \geq 2g(C)-2+\alpha \cdot \deg D.\]
\end{corollary}

\begin{proof}
If $\alpha\in \Z$, then  $\cM\otimes L^{-\alpha}$ underlies a polarized complex Hodge module and thus Lemma \ref{lemma: degree bound of lowest Hodge module} gives 
\[ \deg F_p\cM-\alpha \cdot \deg D=\deg F_p(\cM\otimes L^{-\alpha})\geq 2g(C)-2,\]
which is what we want. If $\alpha\in \Q$, write $\alpha=q/m$ with $\mathrm{gcd}(q,m)=1$. As in the proof of Theorem \ref{thm: vanishing theorem for twisted Hodge module}, there is a finite flat morphism $f:C'\to C$, with $C'$ smooth projective,  $f^{\ast}L=mL'$, where $L'$ is a line bundle on $C'$. Moreover, $f$ is non-characteristic with respect to the underlying twisted $\sD$-module of $M$ so that 
\[ (\cM',F_{\bullet}\cM')\colonequals \omega_{C'/C}\otimes f^{\ast}(\cM,F_p\cM)\]
underlies an $\alpha f^{\ast}L=qL'$-twisted Hodge module on $C'$. Therefore Case I shows that
\[ \deg \omega_{C'/C}+\deg f^{\ast}F_{p}\cM=\deg F_p\cM'\geq 2g(C')-2+q\cdot \deg L'.\]
Since $\deg \omega_{C'/C}=\deg \omega_{C'}-\deg f\cdot \deg \omega_C$ and $C',C$ are both smooth projective (so that $\deg \omega_C=2g(C)-2$, same for $C'$), the above inequality gives
\[ \deg F_{p}\cM\geq 2g(C)-2+\frac{q \deg L'}{\deg f}=2g(C)-2+\alpha \cdot \deg L.\]
The last equality follows from $\deg L'=\frac{1}{m}\deg f^{\ast}L=\frac{\deg f}{m}\deg L$. Therefore we obtain the desired result.
\end{proof}

\section{Higher multiplier ideals: definition and basic properties}\label{sec: higher multiplier ideals: definition and first properties}

\begin{setup}\label{setup for total graph embedding}
Let $X$ be a complex manifold of dimension $n$. Let $D$ be an effective divisor on $X$ and $L=\cO_X(D)$ be the associated holomorphic line bundle and also denote by $L$ the total space of the line bundle. Let $s\in H^0(X,L)$ be a section with $\mathrm{div}(s)=D$, which is viewed as a closed embedding $s: X\to L$. Consider the  direct image Hodge module $M\colonequals s_{\ast}\mathbb{Q}_X^H[n]$, with the underlying filtered right $\sD$-module $(\cM,F_{\bullet}\cM)=s_{+}(\omega_X,F_{\bullet}\omega_X)$.
\end{setup}

\subsection{Definitions and basic properties}
Let $V_{\bullet}\cM$ be the $V$-filtration relative to the zero section of $L$. It is direct to check that \begin{equation}\label{eqn: associated graded of grF}
\gr^F_{-n+k}\cM \cong s_{\ast}(\omega_X\otimes L^k),  \quad \textrm{if $k\geq 0$},
\end{equation} 
and is $0$ otherwise. This leads to an inclusion of coherent $\cO_X$-modules:
\[\gr^F_{-n+k}V_{\alpha}\cM\colonequals \frac{F_{-n+k}\cM\cap V_{\alpha}\cM}{F_{-n+k-1}\cM\cap V_{\alpha}\cM}\hookrightarrow \gr^F_{-n+k}\cM=s_{\ast}(\omega_X\otimes L^k).\] 
\begin{definition}\label{definition: higher multiplier ideals}
For any $k\in \N$ and $\alpha\in \Q$, the \emph{higher multiplier ideal} $\cI_{k,\alpha}(D)$ is defined to be the unique coherent ideal sheaf on $X$ such that
\begin{equation}\label{eqn: definition of higher multiplier ideals}
\gr^F_{-n+k}V_{\alpha}\cM \cong s_{\ast}(\omega_X\otimes L^k \otimes \cI_{k,\alpha}(D)).
\end{equation}
Similarly, we define $\cI_{k,<\alpha}(D)$ using $\gr^F_{-n+k}V_{<\alpha}\cM$. 
\end{definition}
\begin{remark}
If $k=0$, \cite[Theorem 0.1]{BS05} gives 
\begin{equation}\label{eqn: I0 gives the multiplier ideals}
     \cI_{0,\alpha}(D)= \cJ(X,(-\alpha-\epsilon)D), \quad \textrm{for some $0<\epsilon \ll 1$},
\end{equation}
the right hand side is the usual multiplier ideal. Equivalently, $\cI_{0,<\alpha}(D)=\cJ(X,-\alpha D)$.\end{remark}

\begin{definition}\label{definition: jumping part of higher multiplier ideal}
Define $\cG_{k,\alpha}(D)\colonequals \cI_{k,\alpha}(D)/\cI_{k,<\alpha}(D)$. We call $\alpha\in \mathbb{Q}$ a \emph{jumping number} if $\cG_{k,\alpha}(D)\neq 0$. There is an induced isomorphism:
\begin{equation}\label{eqn: Gkalpha as grFgrV}
    \gr^F_{-n+k}\gr^V_{\alpha}\cM \cong \omega_X\otimes L^k\otimes \cG_{k,\alpha}(D).
\end{equation}
\end{definition}

\begin{definition}\label{definition: weight filtration on graded pieces}
For $\alpha\in [-1,0]$, we define the \emph{weight filtration} $W_{\bullet}\cG_{k,\alpha}(D)$ using \eqref{eqn: Gkalpha as grFgrV}:
\begin{equation}\label{eqn: Wgkalpha as grFWgrV}
    W_{\ell}\gr^F_{-n+k}\gr^V_{\alpha}\cM \cong \omega_X\otimes L^k\otimes W_{\ell}\cG_{k,\alpha}(D), \quad \ell\in \Z.
\end{equation}
The graded piece is denoted by $\gr^W_{\ell}\cG_{k,\alpha}(D)$
with
\begin{equation}\label{eqn:gr^Wgkalpha as grFgrWgrV}
    \gr^F_{-n+k}\gr^{W}_{\ell}\gr^V_{\alpha}\cM \cong \omega_X\otimes L^k\otimes \gr^W_{\ell}\cG_{k,\alpha}(D).
\end{equation}
\end{definition}

\begin{definition}\label{definition: weight filtration on Ikalpha}
For $\alpha\in [-1,0]$, the weight filtration $W_{\bullet}\cI_{k,\alpha}(D)$ is defined as the preimage of $W_{\bullet}\cG_{k,\alpha}(D)$ under the quotient map $\cI_{k,\alpha}(D)\to \cG_{k,\alpha}(D)$, which satisfies\begin{equation}\label{eqn: ses of weighted higher multiplier ideals}
0\to \cI_{k,<\alpha}(D) \to W_{\ell}\cI_{k,\alpha}(D) \to W_{\ell}\cG_{k,\alpha}(D)\to 0, \quad \ell\in \Z.
\end{equation}
\end{definition}

\begin{prop}\label{prop: first properties of higher multiplier ideals for divisors}
For $k\in \N$, the following properties hold.
\begin{enumerate}[label=(\Roman*)]
    \item If $\alpha\leq \beta$, then $\cI_{k,\alpha}(D)\subseteq \cI_{k,\beta}(D)$. The sequence of ideal sheaves $\{\cI_{k,\alpha}(D) \}_{\alpha\in \Q}$ is discrete and right continuous, the set of jumping numbers is discrete.
    \item One has $\cI_{k,<k}(D)=\cO_X$.
    \item For any $\alpha$, there exist morphisms
    \begin{equation}\label{eqn: periodicity morphism}
    \cI_{k,\alpha}(D) \to\cI_{k,\alpha-1}(D)\otimes \cO_X(D),
    \end{equation}
    \begin{equation}\label{eqn: griffiths transversality morphism}
     \cI_{k,\alpha}(D) \to \cI_{k+1,\alpha+1}(D),   
    \end{equation}
    \begin{equation}\label{eqn: gkalpha to gk+1alpha+1}
     \cG_{k,\alpha}(D) \to \cG_{k+1,\alpha+1}(D),   
    \end{equation}
    which are isomorphisms for $\alpha<0$ (respectively $\alpha\geq -1$ and $\alpha >-1$). The morphism \eqref{eqn: gkalpha to gk+1alpha+1} is surjective for $\alpha=-1$. 
    \item For $k\geq 1$, there are two short exact sequences:    \begin{align}\label{eqn: ses for Ik0 Ik-1}
        0 \to L^{k}\otimes \cI_{k,0}(D)\otimes \omega_X\to L^{k+1}\otimes \cI_{k,-1}(D)\otimes \omega_X\to \gr^F_{-n+k}\omega_X(\ast D) \to 0,
    \end{align}
    \begin{align}\label{eqn: ses from Gk-1-1 to Gk0}
        0 \to \gr^F_{-n+k}\omega_X(!D)\otimes L \to L^{k}\otimes \cG_{k-1,-1}(D)\otimes \omega_X \to L^{k}\otimes \cG_{k,0}(D)\otimes \omega_X \to 0.
    \end{align}
 Here $(\omega_X(\ast D),F)$ and $(\omega_X(!D),F)$ are filtered $\sD$-modules underlying the mixed Hodge modules $j_{\ast}\Q^H_{X\setminus D}[n]$ and $j_{!}\Q^{H}_{X\setminus D}[n]$ with $j:X\setminus D\hookrightarrow X$.
    
  \end{enumerate}    \end{prop}

\begin{proof}

The statement (I) follows from properties of $V$-filtration in Definition \ref{definition: V-filtration}.

For (II), if $\alpha\geq k$, the surjectivity of $\d_t$ for $\alpha\geq -1$ induces a surjection
\[ F_{-n-1}\gr^V_{\alpha-k-1}\cM \twoheadrightarrow F_{-n+k}\gr^V_{\alpha}\cM \otimes L^{-k-1}.\]
Since $F_{-n-1}\cM=0$ by \eqref{eqn: associated graded of grF}, we have $F_{-n+k}\gr^V_{\alpha}\cM=0$ for $\alpha \geq k$. This means that $F_{-n+k}\cM \subseteq V_{<k}\cM$ and therefore  $\gr^F_{-n+k}V_{<k}\cM=\gr^F_{-n+k}\cM$.

For (III), the existence and properties of \eqref{eqn: periodicity morphism}, \eqref{eqn: griffiths transversality morphism} and \eqref{eqn: gkalpha to gk+1alpha+1} follow from \cite[(3.2.1)]{Saito88} and $\Q^H_X[n]$ has strict support $X$, except for the property of \eqref{eqn: griffiths transversality morphism}. To prove this, let us consider the following commutative diagram
\[ \begin{tikzcd}
0 \arrow[r] &\cI_{k,\alpha}(D) \arrow[r]\arrow[d] &\cI_{k,k}(D) \arrow[r]\arrow[d] &\cI_{k,k}(D)/\cI_{k,\alpha}(D) \arrow[r]\arrow[d] &0\\
0 \arrow[r] &\cI_{k+1,\alpha+1}(D) \arrow[r] &\cI_{k+1,k+1}(D) \arrow[r] &\cI_{k+1,k+1}(D)/\cI_{k+1,\alpha+1}(D) \arrow[r]&0,
\end{tikzcd}\]
where vertical arrows come from \eqref{eqn: griffiths transversality morphism}. It is clear by the proof of (II) that the second vertical map is an isomorphism. The third vertical map is also an isomorphism by \eqref{eqn: gkalpha to gk+1alpha+1}, since $\cI_{k,k}(D)/\cI_{k,\alpha}(D)$ is a finite extension of $\cG_{k,\beta}(D)$ for $\beta\in (\alpha,k]$ and $\beta>\alpha\geq -1$. We conclude by snake lemma that the first vertical map is an isomorphism as well.

To prove (IV), let $j:X\setminus D \hookrightarrow X$ and $i: D\hookrightarrow X$ be the open and closed embeddings. We use the functorial triangles from \cite[(4.4.1)]{Saito90}. Since $i_{\ast}(H^0(i^{!}\Q^H_X[n]))=0$, we have a short exact sequence of mixed Hodge modules
\begin{equation}\label{eqn: Hodge module ses for i!}
0 \to \Q^H_X[n] \to j_{\ast}\Q^H_{X\setminus D}[n] \to i_{\ast}(H^1i^{!}\Q^H_X[n])\to 0,
\end{equation}
with the underlying filtered $\sD$-modules 
\begin{equation}\label{eqn: filtere D module ses for i!}
0\to (\omega_X,F) \to (\omega_X(\ast D), F) \to i_{\ast}(H^1i^{!}(\omega_X,F)) \to 0.
\end{equation}
Since $\Q^H_X[n]$ has strict support $X$, the underlying filtered $\sD$-module of $i_{\ast}(H^1i^{!}\Q^H_X[n])$ can be computed as the cokernel of the injective morphism
\[ \mathrm{var}: \left(\gr^V_0\cM,F_{\bullet}\gr^V_0\cM\right) \to \left(\gr^V_{-1}\cM\otimes L, F_{\bullet}\gr^V_{-1}\cM\otimes L\right), \]
where locally $\mathrm{var}=t$. Since $\gr^F_{-n}\omega_X=\omega_X$ and $\gr^F_{-n+k}\omega_X=0$ for $k\geq 1$, combined with \eqref{eqn: Gkalpha as grFgrV} and \eqref{eqn: filtere D module ses for i!}, one has a short exact sequence for $k\geq 1$:
\begin{equation}\label{eqn: ses from Gk0 to Gk-1}
0 \to \omega_X\otimes L^{k}\otimes \cG_{k,0}(D) \to \omega_X \otimes L^{k+1}\otimes \cG_{k,-1}(D) \to \gr^F_{-n+k}\omega_X(\ast D) \to 0.
\end{equation}
It follows from (III) that the morphism induced by \eqref{eqn: periodicity morphism}
\[ \omega_X\otimes L^k\otimes \cI_{k,<0}(D)\xrightarrow{\sim}\omega_X\otimes L^{k+1}\otimes \cI_{k,<-1}(D)\]
is an isomorphism. Therefore \eqref{eqn: ses for Ik0 Ik-1} holds. 

Now, let us consider the exact sequence dual to \eqref{eqn: Hodge module ses for i!}:
\[ 0\to i_{\ast}(H^{-1}i^{\ast}\Q^H_X[n]) \to j_{!}\Q^H_{X\setminus D}[n] \to \Q^H_X[n] \to 0.\]
Dually, because $\Q^H_X[n]$ has strict support $X$, the underlying filtered $\sD$-module of $i_{\ast}(H^{-1}i^{\ast}\Q^H_X[g])$ is computed by the kernel of the surjective morphism
\[ \mathrm{can}: \left( \gr^V_{-1}\cM,F_{\bullet-1}\gr^V_{-1}\cM\right) \to \left(\gr^V_{0}\cM\otimes L^{-1}, F_{\bullet}\gr^V_{0}\cM\otimes L^{-1}\right),\]
where locally $\mathrm{can}=\d_t$. This leads to \eqref{eqn: ses from Gk-1-1 to Gk0}.

\end{proof}

\subsection{Higher multiplier ideals of $\Q$-divisors}\label{sec: Q divisors}

The definition of higher multiplier ideals can be extended to $\Q$-divisors, although
the twisting causes some complications. Suppose that $D$ is an effective divisor on
$X$, defined by a global section $s \in H^0(X, L)$. For any integer $m \geq 1$, we
denote by $M_m$ the Hodge module on the total space of the line bundle $L^m$,
obtained by the graph embedding along the section $s^m$ defining the divisor $mD$.
Let $(\Mmod_m, F_{\bullet} \Mmod_m)$ be the underlying filtered $\sD$-module.
It follows from Proposition \ref{prop: V filtration for powers of
functions} (applied locally) that we have an isomorphism of $\sO_X$-modules
\[
	F_{-n+k} V_{m \alpha} \Mmod_1 \cong F_{-n+k} V_{\alpha} \Mmod_m,
\]
for $\alpha \leq 0$ and $k \in \Z$. In light of Definition \ref{definition: higher
multiplier ideals}, this is saying that 
\begin{equation}\label{eqn: global version of compatibility of powers of functions}
	 \cI_{k,m\alpha}(D)\otimes \sO_{X}(kD) \cong
	 \cI_{k,\alpha}(mD) \otimes \sO_X(kmD).
\end{equation}
Both sides are torsion-free coherent $\sO_X$-modules of rank $1$. We can use this
formula in order to extend the definition of higher multiplier ideals to
effective $\Q$-divisors.

Let $E$ be an effective $\Q$-divisor on a complex manifold $X$. Let $m \geq 1$ be a
positive integer with the property that $mE$ has integer coefficients. For $\alpha
\leq 0$ and $k \in \N$, we then define the torsion-free coherent $\sO_X$-module
\[
	\cS_{k, \alpha}(E) \defeq
	 \cI_{k,\alpha/m}(mE) \otimes \sO_X(kmE).
\]
As a consequence of \eqref{eqn: global version of compatibility of powers of functions}, the resulting $\sO_X$-module is (up to
isomorphism) independent of the choice of $m$. In order to get
a sheaf of ideals, we observe that the reflexive hull of $\cS_{k,
\alpha}(E)$ is a line bundle. Consequently, we have
\[
	\cS_{k, \alpha}(E) = \cI_{k,\alpha}'(E) \otimes \cS_{k, \alpha}(E)^{\ast\ast}
\]
for a unique coherent sheaf of ideals in $\sO_X$ and we define $\cI_{k,\alpha}'(E)$ to be the \emph{higher multiplier ideal} of $E$. By construction, the cosupport of
this ideal sheaf has codimension $\geq 2$ in $X$. In the case of a $\Z$-divisor $D$,
this ideal $\cI_{k, \alpha}'(D)$ is the result of removing from $\cI_{k, \alpha}(D)$
its divisorial part. All the local properties of higher multiplier ideals therefore
carry over to the setting of $\Q$-divisors.

\subsection{Comparison with the microlocal $V$-filtration}
We relate the higher multiplier ideals with Saito's microlocal $V$-filtration \cite{Saito94,Saito16}. Denote by $\tilde{V}^{\bullet}\cO_X$ the microlocal $V$-filtration. Using \cite[Lemma 2.13]{MY23}, it is not hard to show the following.

\begin{corollary}\label{corollary: higher multiplier ideals and microlocal ideals}
For any $k\in \N$, one has
 \[\cI_{k,\alpha}(D) =\begin{cases}\tilde{V}^{k-\alpha}\cO_X, &\textrm{if $\alpha\geq -1$},\\
 \tilde{V}^{k-(\alpha+t)}\cO_X\otimes \cO_X(-tD), &\textrm{if $\alpha< -1$ and $t\in \N$ so that $-1\leq \alpha+t<0$}.
 \end{cases}\]
\end{corollary}

\begin{corollary}\label{corollary: Ik+1 contained in Ik}
One has 
\[ \cI_{k+1,\alpha}(D)\subseteq \cI_{k,\alpha}(D), \quad \textrm{for all $\alpha\in \Q,k\in \N$}.\]
\end{corollary}

\begin{proof}
    It follows from Corollary \ref{corollary: higher multiplier ideals and microlocal ideals} and that $\tilde{V}^{\bullet}\cO_X$ is an decreasing filtration \cite{Saito16}.
\end{proof}

\subsection{Minimal exponents as  jumping numbers}\label{sec: minimal exponent}
Let $f$ be a holomorphic function on $X$. The \emph{minimal exponent} of $f$, denoted by $\tilde{\alpha}_f$ in \cite{Saito94}, is defined to be the negative of the largest root of $b_f(s)/(s+1)$, where $b_f(s)$ is the Bernstein-Sato polynomial of $f$. If $b_f(s)=s+1$, we set $\tilde{\alpha}_f=+\infty$. Since the set $\{ \textrm{exp}(2\pi is_0) \mid b_f(s_0)=0\}$ is exactly the set of monodromy eigenvalues of the nearby cycle $\psi_{f}\Q_X[\dim X]$ by Malgrange \cite{Malgrange}, it follows that $\{f=0\}$ is smooth if and only if $\tilde{\alpha}_f=+\infty$. If $D$ is an effective divisor, the minimal exponent can be defined as $\tilde{\alpha}_D=\min_{x\in D} \tilde{\alpha}_{f_x}$, where $f_x$ is the local function of $D$ such that $f_x(x)=0$. \begin{lemma}\label{lemma: minimal exponent via Gkalpha}
Let $D$ be an effective divisor on  $X$, then 
    \begin{align*}
    \tilde{\alpha}_{D}&= \min\{ k-\alpha, k\in \N, \alpha\in (-1,0] \mid \cG_{k,\alpha}(D)\neq 0 \}\\
    &=\min  \{k-\alpha, k\in \N, \alpha\in (-1,0] \mid \cI_{k,<\alpha}(D)\subsetneq \cO_X\}.
    \end{align*}
\end{lemma}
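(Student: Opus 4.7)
The plan is to translate both minima into statements about jumping numbers of the microlocal $V$-filtration $\tilde{V}^{\bullet}\cO_X$ via Corollary \ref{corollary: higher multiplier ideals and microlocal ideals}, and then invoke the theorem of Saito \cite{Saito94} (generalized to effective divisors in \cite{MSY23}) that identifies $\tilde{\alpha}_D$ with the smallest positive jumping number of $\tilde{V}^{\bullet}\cO_X$.

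For the first step I would set up a bijection between positive rationals $\beta$ and pairs $(k,\alpha)\in \N\times(-1,0]$ with $k-\alpha>0$, defined by $(k,\alpha)=(\lfloor\beta\rfloor,-\{\beta\})$ for $\beta\notin\N$ and $(k,\alpha)=(\beta,0)$ for $\beta\in\N_{\geq 1}$. Combining Corollary \ref{corollary: higher multiplier ideals and microlocal ideals} with the periodicity isomorphism $\cI_{k-1,-1}(D)\cong\cI_{k,0}(D)$ from part (III) of Proposition \ref{prop: first properties of higher multiplier ideals for divisors} (which handles the integer case), one obtains $\cI_{k,\alpha}(D)=\tilde{V}^{\beta}\cO_X$ for all such pairs. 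Consequently $\cG_{k,\alpha}(D)\neq 0$ if and only if $\tilde{V}^{>\beta}\cO_X\subsetneq\tilde{V}^{\beta}\cO_X$, so that the first minimum in the lemma is exactly the smallest positive jumping number of the microlocal $V$-filtration, which by Saito's theorem equals $\tilde{\alpha}_D$.

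For the second step I would verify that the two minima in the lemma agree. The direction from the first to the second is immediate, since $\cG_{k,\alpha}(D)\neq 0$ forces $\cI_{k,<\alpha}(D)\subsetneq\cI_{k,\alpha}(D)\subseteq\cO_X$. Conversely, suppose $\cI_{k_0,<\alpha_0}(D)\subsetneq\cO_X$ with $\alpha_0\in(-1,0]$. Since $\cI_{k_0,<k_0}(D)=\cO_X$ by property (II) of the same proposition, and the $V$-filtration is discrete, there is a jumping number $\beta\in[\alpha_0,k_0)$ of $\cI_{k_0,\bullet}(D)$. Writing uniquely $\beta=t+\gamma$ with $t\in\N$ and $\gamma\in(-1,0]$, and iterating the isomorphism $\cG_{k,\alpha}(D)\cong\cG_{k+1,\alpha+1}(D)$ for $\alpha>-1$ from part (III), one obtains $\cG_{k_0-t,\gamma}(D)\cong\cG_{k_0,\beta}(D)\neq 0$. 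Because $(k_0-t)-\gamma=k_0-\beta\leq k_0-\alpha_0$, this shows the first minimum is bounded above by the second.

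The only genuine input is Saito's identification of $\tilde{\alpha}_D$ with the first jump of $\tilde{V}^{\bullet}\cO_X$; the remaining work is bookkeeping to pass between the two natural indexings of the filtration, by $\beta>0$ on the microlocal side and by $(k,\alpha)$ on the higher-multiplier-ideal side. The only delicate point is the treatment of the boundary case $\alpha=0$ (equivalently $\beta\in\N$), which is handled cleanly by the shift $\cI_{k-1,-1}(D)\cong\cI_{k,0}(D)$.
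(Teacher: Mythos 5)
Your argument is correct, but it reaches the first equality by a somewhat different route than the paper. The paper never passes through the microlocal $V$-filtration in this proof: it reduces to a local equation $D=\div(f)$, quotes from \cite{MY23} and \cite{Saito16} the characterization $\tilde{\alpha}_D=\min\{k+\beta \mid k\in\N,\ \beta\in[0,1),\ \gr^F_k\gr_V^{\beta}\cB_f\neq 0\}$, and notes that the right-to-left translation identifies $\gr^F_k\gr_V^{-\alpha}\cB_f$ with $\cG_{k,\alpha}(D)$ up to a twist, so the first equality is essentially a restatement of the cited result. You instead invoke the equivalent formulation of the same external input (Saito \cite{Saito94}, \cite{MSY23}: $\tilde{\alpha}_D$ is the first jumping number of $\tilde{V}^{\bullet}\cO_X$) and transfer it through Corollary \ref{corollary: higher multiplier ideals and microlocal ideals}; since that corollary is established earlier and independently of the lemma, there is no circularity, and your reindexing is accurate — note only that in the integer case the corollary already gives $\cI_{k,0}(D)=\tilde{V}^{k}\cO_X$ directly, so the shift $\cI_{k-1,-1}(D)\cong\cI_{k,0}(D)$ (which is the isomorphism \eqref{eqn: griffiths transversality morphism}, not the periodicity isomorphism \eqref{eqn: periodicity morphism}) is not really needed, and that the excluded pair $(k,\alpha)=(0,0)$ contributes nothing since $\cI_{0,<0}(D)=\cO_X$ by part (II) of Proposition \ref{prop: first properties of higher multiplier ideals for divisors}. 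For the second equality the two arguments coincide in substance: the trivial inclusion $\cI_{k,<\alpha}(D)\subsetneq\cI_{k,\alpha}(D)\subseteq\cO_X$ in one direction, and in the other the discreteness of the filtration, $\cI_{k,<k}(D)=\cO_X$, and the downward shift $\cG_{k_0,\beta}(D)\cong\cG_{k_0-t,\beta-t}(D)$ via the isomorphism of \eqref{eqn: gkalpha to gk+1alpha+1} for indices above $-1$; your write-up of this step is in fact a bit more explicit than the paper's terse treatment. What your route buys is that it stays entirely at the level of ideal sheaves and the already-proved comparison with $\tilde{V}^{\bullet}\cO_X$; what the paper's route buys is a shorter first equality, since the cited $\sD$-module statement is literally the nonvanishing of $\cG_{k,\alpha}(D)$ after side-switching.
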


\begin{proof}
It follows from \cite[Proposition 2.14]{MY23} and \cite[(1.3.8)]{Saito16}.
\end{proof}

\section{Local and global properties}
We give a more detailed study of higher multiplier ideals and use Set-up \ref{setup for total graph embedding} throughout. 
\subsection{Local properties}

\begin{proof}[Proof of Theorem \ref{thm: restriction theorem}]
The inclusion $H\hookrightarrow X$ induces a commutative diagram
\[ \begin{tikzcd}
L_H \arrow[d,"p"]\arrow[r,"i"] & L \arrow[d,"p"]\\
H \arrow[r,"i"] & X
\end{tikzcd}\]
where $L_H$ is the total space of the pullback line bundle $i^{\ast}L$ and $p$ are projection maps. In Set-up \ref{setup for total graph embedding} the effective divisor $D$ induces a closed embedding $s:X\to L$ and a mixed Hodge module $M=s_{\ast}\Q^H_X[n]\in \MHM(L)$; analogously, we define $M_H\in \MHM(L_H)$ using the divisor $D_H$.  Let $j:L\setminus L_H\hookrightarrow L$ be the open embedding.

Consider the distinguished triangle from \cite[(4.4.1)]{Saito90}:
\[ i_{\ast}i^{!}M=i_{!}i^{!}M \to M \to j_{\ast}j^{\ast}M\to i_{!}i^{!}M[1].\]
Since $L_H$ is a smooth hypersurface in $L$, one has $i^{!}M[1]=M_H(-1)$ with $F_{\bullet}\cM_H(-1)=F_{\bullet+1}\cM_H$, where the Tate twist comes from $M_H$ being the pushforward of the constant Hodge module on $H$. This gives a short exact sequence of mixed Hodge modules 
\begin{equation}\label{eqn: ss for open embedding}
     0 \to M \to j_{\ast}j^{\ast}M \to i_{\ast}M_H(-1) \to 0.
\end{equation}
Fix $k\in \N$ and $\alpha\in \Q$. Since both Hodge and $V$-filtration preserve exactness and commute with the functor $i_{\ast}$, \eqref{eqn: ss for open embedding} induces a short exact sequence of coherent $\cO_X$-modules
\begin{equation}\label{eqn: ss of grF and V}
0 \to \gr^F_{-n+k}V_{\alpha}\cM \to \gr^F_{-n+k}V_{\alpha}(j_{\ast}j^{\ast}\cM) \to i_{\ast}\left(\gr^F_{-n+k+1}V_{\alpha}\cM_H\right) \to 0.
\end{equation}
This defines an element of
\[ \mathrm{Ext}^1_{X}\left(i_{\ast}(\gr^F_{-n+k+1}V_{\alpha}\cM_H),\gr^F_{-n+k}V_{\alpha}\cM\right).\]
The right adjoint of the functor $i_{\ast}$ for coherent sheaves is the functor $i^{!}=\omega_{H/X}\otimes \mathbf{L}i^{\ast}[-1]$, hence \eqref{eqn: ss of grF and V} also determines in a natural way a morphism
\begin{equation}\label{eqn: restriction morphism}
    \gr^F_{-(n-1)+k}V_{\alpha}\cM_H \to i^{!}\gr^F_{-n+k}V_{\alpha}\cM[1]\cong\omega_{H/X}\otimes \mathbf{L}i^{\ast}\gr^F_{-n+k}V_{\alpha}\cM.
\end{equation}
Substituting in the definition of higher multiplier ideals \eqref{eqn: definition of higher multiplier ideals}, we arrive at
\[ \omega_H\otimes L^k_H \otimes \cI_{k,\alpha}(D_H) \to \omega_{H/X} \otimes \mathbf{L} i^{\ast}(\omega_X\otimes L^k \otimes \cI_{k,\alpha}(D)),\]
which easily gives the desired morphism
\[   \cI_{k,\alpha}(D_H) \to \mathbf{L} i^{\ast} \cI_{k,\alpha}(D).\]
Since $\gr^F_{-n+k}\cM \cong s_{\ast}(\omega_X\otimes L^k)$ and $\gr^F_{-n+k+1}\cM_H\cong (s\circ i)_{\ast}(\omega_H\otimes L^k_H)$ by \eqref{eqn: associated graded of grF}, it is easy to see that there is a commutative diagram
\[ \begin{tikzcd}
\cI_{k,\alpha}(D_H) \arrow[r] \arrow[d] &i^{\ast}\cI_{k,\alpha}(D)\arrow[d]\\
\cO_H \arrow[r,equal] & i^{\ast}\cO_X 
\end{tikzcd}
\]
Since the image of $i^{\ast}\cI_{k,\alpha}(D)$ in $i^{\ast}\cO_X=\cO_H$ is defined to be $\cI_{k,\alpha}(D)\cdot \cO_H$, we conclude that there is an injection $\cI_{k,\alpha}(D_H)\hookrightarrow \cI_{k,\alpha}(D)\cdot \cO_H$.

Now suppose $H$ is sufficiently transverse to $D$. To prove that
$\cI_{k,\alpha}(D_H)=\cI_{k,\alpha}(D)\cdot \cO_H$, we need to show that the morphism
\eqref{eqn: restriction morphism} is an isomorphism. This is a local problem, and so
we may assume without loss of generality that $D$ is defined, in the graph embedding,
by a holomorphic function $t$ whose divisor is a smooth hypersurface.
By passing to subquotients, the morphism in \eqref{eqn: restriction
morphism} determines, for each $\alpha \in \R$, a morphism
\begin{equation}\label{eqn: gr of restriction morphism}
    \gr^F_{-(n-1)+k} \gr^V_{\alpha}\cM_H \to 
	 \omega_{H/X}\otimes i^{\ast}\gr^F_{-n+k} \gr^V_{\alpha}\cM.
\end{equation}
Here we are allowed to replace $\mathbf{L} i^*$ by $i^*$ because $H$, being sufficiently
transverse to $D$, is noncharacteristic with respect to the mixed Hodge
module $\gr^V_{\alpha} \cM$ for every $\alpha$.

We claim that it is enough to prove that \eqref{eqn: gr of restriction morphism} is an
isomorphism for every $\alpha$. Indeed, this implies that the kernel and
cokernel of \eqref{eqn: restriction morphism} are trivial modulo $V_{\alpha} \cM$
(respectively $V_{\alpha} \cM_H$) for all $\alpha \ll 0$. But by the definition of
mixed Hodge modules, multiplication by $t$ induces isomorphisms
\[
	t \colon \gr^F_p V_{\alpha} \cM \to \gr^F_p V_{\alpha-1} \cM \quad \text{and} \quad
	t \colon \gr^F_p V_{\alpha} \cM_H \to \gr^F_p V_{\alpha-1} \cM_H
\]
for every $\alpha < 0$. Since $\gr^F_p V_{\alpha} \cM$ is a coherent $\sO_X$-module, the
result that we want now follows from Krull's intersection theorem (with respect to
the ideals generated by the powers of $t$) and Nakayama's lemma.

Now the right-hand side of \eqref{eqn: gr of restriction morphism} is the
noncharacteristic restriction of the mixed Hodge module $\gr^V_{\alpha} \cM$ to the
hypersurface $H$. Since morphisms of mixed Hodge modules are strict with respect to
the Hodge filtration, it is therefore enough to show that
\begin{equation}\label{eqn: gr of restriction morphism as MHM}
    \gr^V_{\alpha}\cM_H \to 
	 \omega_{H/X}\otimes i^{\ast} \gr^V_{\alpha}\cM
\end{equation}
is an isomorphism of $\sD$-modules. But this follows from \cite[Thm.~1.1]{DMST06}, again due to the fact that $H$ is
sufficiently transverse to $D$.
\end{proof}

Let $p:X\to T$ be a smooth morphism of relative dimension $n$ between arbitrary varieties $X$ and $T$, and let $s:T\to X$ be a morphism such that $p\circ s=\mathrm{id}_T$. Suppose that $D$ is an effective divisor on $X$, relative over $T$. For every $t\in T$, denote by $D_t$ the restriction of $D$ on $p^{-1}(t)$.
\begin{thm}\label{thm: semicontinuity theorem}
For every $q\geq 1$, the set $ \left\{ t\in T \mid \cI_{k,\alpha}(D_t)\not\subseteq \fm^q_{s(t)}\right\}$ is open.
\end{thm}
\begin{proof}
Since we already have Theorem \ref{thm: restriction theorem}, the same proof of \cite[Theorem E]{MPRestriction} applies.\end{proof}

\begin{proof}[Proof of Theorem \ref{thm: numerical criterion for nontriviality}]
The problem is local, and so we may assume that $X$ is an open subset of $\C^n$, and that $D=\mathrm{div}(f)$ for a holomorphic function $f$ with $f(0)=0$. By cutting with $d$ generic chosen hyperplanes and using  Theorem \ref{thm: restriction theorem}, we can assume $\dim X=n-d$, where $d=\dim Z$ and $0\in D$ is an isolated singular point of multiplicity $m$.

Let $\mathbf{A}^N$ be the affine space parametrizing the coefficients of homogeneous polynomials of degree $m$, with coordinates $c_v$, for $v=(v_1,\ldots,v_{n-d})\in \bZ^{n-d}_{\geq 0}$, with $|v|\colonequals \sum_i v_i=m$. Let us consider the effective divisor $F$ on $X\times \bA^N$ defined by $f+\sum_{|v|=m}c_vx^v$. It is direct to see that there is an open neighborhood  $U\subseteq \bA^N$ of $0$, consisting of those $t\in \bA^N$ where
\[D_t\colonequals F\cap (X\times \{t\})\] 
is a reduced divisor on $X\times\{t\}\cong X$ and $D_t$ has an ordinary singularity at $0$ for $t\neq 0$ and $D_0=D$. Let $\alpha=-\frac{r+q-1}{m}$ for some $1\leq q\leq \max(m-r,m-1)$,  by \cite[Corollary 1.5]{DY25} one has \begin{equation*} \cI_{k,<\alpha}(D_t)_0=\cI_{k,-(r+q)/m}(D_t)_0 =\fm^{q}_0.
\end{equation*}
The first equality uses that all jumping numbers of $\cI_{k,<\alpha}(D_t)_0$ satisfy $m\alpha\in \Z$. By Theorem \ref{thm: semicontinuity theorem} we must have
\begin{equation*} 
\cI_{k,<\alpha}(D)_0=\cI_{k,<\alpha}(D_0)_0\subseteq  \fm^{q}_0.
\end{equation*}
The desired results follow.\end{proof}

\begin{proof}[Proof of Corollary \ref{corollary: upper bound of minimal exponent article}]
Write $\codim_X(Z)=mk+r$ for $k\in \N$ and $0\leq r\leq m-1$. Set $\alpha=-r/m\in (-1,0]$. Since $\cI_{k,<\alpha}(D)\neq \cO_X$ by Theorem \ref{thm: numerical criterion for nontriviality}, Lemma \ref{lemma: minimal exponent via Gkalpha} gives the desired bound.
\end{proof}

\subsection{The center of minimal exponent}\label{sec: the center of minimal exponent}
We construct a subscheme of $D$ associated to the minimal exponent $\tilde{\alpha}_D$, generalizing minimal log canonical centers \cite[\S 4]{KollarsingularitiesMMP}. Write $\tilde{\alpha}_D=k-\alpha$ for unique $k\in \N$ and $\alpha\in (-1,0]$. By Lemma \ref{lemma: minimal exponent via Gkalpha}, we have
\begin{equation}\label{eqn: vanishing of gkalpha by the minimality of D}
    \cI_{k,<\alpha}(D)\subsetneq \cO_X, \quad \cG_{k,\alpha}(D)\neq 0,\textrm{ and }\cG_{\ell,\alpha}(D)=0 \quad \textrm{whenever $\ell<k$}.
\end{equation}
Let $W_{\bullet}\cG_{k,\alpha}(D)$ be the weight filtration in Definition \ref{definition: weight filtration on graded pieces} and let $\ell$ be the largest integer such that $\gr^W_{\ell}\cG_{k,\alpha}(D)\neq 0$. Then there is a surjection
\begin{align}\label{eqn: definition of center of minimal exponent}
\cO_Z=\cG_{k,\alpha}(D)\twoheadrightarrow \gr^W_{\ell}\cG_{k,\alpha}(D).
\end{align}
In particular, $\gr^W_{\ell}\cG_{k,\alpha}(D)=\cO_Y$ for some subscheme $Y$ of $Z$, the zero locus of $\cI_{k,<\alpha}(D)$. \begin{definition}\label{definition: center of minimal exponent}
The \emph{center of minimal exponent} of $(X,D)$ is defined to be the subscheme $Y\subseteq X$ satisfying $\gr^W_{\ell}\cG_{k,\alpha}(D)=\cO_Y$, where $\ell,k,\alpha$ are defined above. The ideal sheaf of $Y$ fits into the following short exact sequence
\begin{equation}\label{eqn: ideal sheaf of center of minimal exponent}
0 \to \cI_{k,<\alpha}(D) \to \cI_Y \to W_{\ell-1}\cG_{k,\alpha}(D) \to 0.
\end{equation}
\end{definition}
\begin{proof}[Proof of Theorem \ref{thm: rationality and normality of the center of minimal exponent article}]
The isomorphism \eqref{eqn:gr^Wgkalpha as grFgrWgrV} gives
\[ \omega_X\otimes L^k\otimes \cO_Y=\omega_X\otimes L^k\otimes \gr^W_{\ell}\cG_{k,\alpha}(D)\cong \gr^F_{-n+k}\gr^W_{\ell}\gr^V_{\alpha}\cM.\]
Using \eqref{eqn: vanishing of gkalpha by the minimality of D} and \eqref{eqn:gr^Wgkalpha as grFgrWgrV}, we have
\[ \gr^F_{-n+k'}\gr^W_{\ell}\gr^V_{\alpha}\cM \cong \gr^W_{\ell}\cG_{k',\alpha}(D)=0, \quad \forall k'<k.\]
So locally $\cO_Y\cong F_{-n+k}\gr^W_{\ell}\gr^V_{\alpha}\cM$ is the first step in the Hodge filtration of a complex Hodge module. The desired property follows from Proposition \ref{prop: support of lowest Hodge filtration}.
\end{proof}
\begin{prop}\label{prop: support of lowest Hodge filtration}
Let $M$ be a complex Hodge module on a complex manifold $X$. Let $p$ be the integer such that $F_{p-1}\cM=0$ and $F_p\cM\neq 0$.  If $F_{p}\cM\cong \cO_Y$ for a connected closed subscheme $Y$, then $Y$ is irreducible, reduced, normal and has at worst rational singularities.
\end{prop}

\begin{proof}
Being a Hodge module, $M$ admits a decomposition by strict support. But since $Y$ is connected, $\cO_Y$ is indecomposable and so  $Y$ must be irreducible and there is a unique summand of $M$ that has strict support equal to $Y$ such that its lowest piece is $\cO_Y$. Without loss of generality, we can assume $M$ has strict support equal to $Y$.  We also find that $Y$ must be reduced, because it is the first step in the Hodge filtration of a complex mixed Hodge module. By the structure theorem for complex Hodge modules \cite[Part III, Section 16]{SSMHMproject}, $M$ is generically a variation of complex polarized Hodge structure $\cV$ on $Y$. Let $\mu:\tilde{Y}\to Y$ be a resolution of singularities that makes the singular locus of the CVHS $\cV$ into a normal crossing divisor $D$. On $\tilde{Y}$, using the structure theorem again, we then get a complex Hodge module $\tilde{M}$ by uniquely extending $\cV$ from $\tilde{Y}\setminus D$ to $\tilde{Y}$ such that $F_{p}\tilde{\cM}$ is a line bundle. The direct image theorem for complex Hodge modules gives
\[\cO_Y=F_p\cM\xrightarrow{\sim} \mathbf{R}\mu_{\ast}F_{p}\tilde{\cM}.\]
By adjunction we get a morphism $\cO_{\tilde{Y}}\to F_p\tilde{\cM}$. Taking direct image gives
\[\mathbf{R}\mu_{\ast}\cO_{\tilde{Y}}\to \mathbf{R}\mu_{\ast}F_{p}\tilde{\cM}\xrightarrow{\sim}F_p\cM=\cO_Y,\]
which is a splitting of the natural morphism $\cO_Y\to \mathbf{R}\mu_{\ast}\cO_{\tilde{Y}}$. By a result of Kov\'{a}cs \cite[Theorem 1]{Kovacs00}, this implies that $Y$ is normal with at worst rational singularities.
\end{proof}

Assume $D=\textrm{div}(f)$ for a holomorphic function $f$. We can describe the two sets above set-theoretically. By \cite[Corollary 5.7]{DLY}, one has
\begin{equation} Y=\{ x\in D \mid \underset{s=-\tilde{\alpha}_f}{\textrm{mult}}b_{f,x}(s)=\underset{s=-\tilde{\alpha}_f}{\textrm{mult}}b_{f}(s)\},\end{equation}
where $b_{f,x}(s)$ is the local $b$-function of $f$ near $x$. Lemma \ref{lemma: minimal exponent via Gkalpha} implies
\begin{equation} Z=\{ x\in D \mid \tilde{\alpha}_{f,x}=\tilde{\alpha}_f=\min_{x\in D}\tilde{\alpha}_{f,x}\},\end{equation}
where $\tilde{\alpha}_{f,x}$ is the minimal exponent of the local equation of $f$ near $x$. In particular, if $-\tilde{\alpha}_f$ is a simple root of $b_f(s)/(s+1)$, then $Y=Z$. Finally, one has
\begin{equation} D_{\textrm{sing}}=\{x\in D\mid \tilde{\alpha}_{f,x}<+\infty\},\end{equation}
because $D$ is smooth at $x$ if and only if $\tilde{\alpha}_{f,x}=+\infty$. Note that, as varieties, $Y\subseteq Z\subseteq D_{\textrm{sing}}$. 
\begin{example}
Since $\tilde{\alpha}_{f,x}$ is lower semicontinuous in $x$ \cite[Theorem E]{MP18Vfiltration}, it can certainly happen that $Z\subsetneq D$. For example, consider $f=x(x+y)\left((x-1)^2+y^3\right)$ on $\C^2$. The divisor $D$ has two singular points $p_1=(1,0)$ and $p_2=(0,0)$, where $p_1$ is a cusp and $p_2$ is a node. It follows that $\tilde{\alpha}_f=\tilde{\alpha}_{f,p_1}=5/6$, but $\tilde{\alpha}_{f,p_2}=1$. Thus $Z=\{p_1\}\subsetneq D_{\textrm{sing}}$. 
\end{example}
\begin{example}\label{example: center of minimal exponent for theta}
Suppose $f$ is the determinant function on $X$, the space of $n$ by $n$ matrices. Here $b_f(s)=\prod_{i=1}^n (s+i)$. So $\tilde{\alpha}_f=2$ and $\textrm{mult}_{s=-2}b_{f}(s)=1$, whenever $n\geq 2$. Suppose $x\in D$ is a matrix of rank $k$; then near $x$, up to a unit, $f$ is the determinant function on the space of $(n-k)$ by $(n-k)$ matrices. It follows from above that $Y=Z=D_{\Sing}$.\end{example}

The following two examples show that the strict inclusion $Y\subsetneq Z$ can occur.
\begin{example}\label{example:snc}
Let $X=\C^3$ and $f=xyz$. Here $b_f(s)=(s+1)^3$ and $\tilde{\alpha}_f=1$. So $Y$ is computed by $\gr^{V}_0$ (see \eqref{eqn: unipotent vanishing cycle D modules}). Let $p=(x_0,y_0,z_0)\in D_{\textrm{sing}}$, the union of $3$ axes. Assume one of the coordinates is nonzero, say $x_0\neq 0$. One can find local coordinates $(u,v,w)$ such that $f=vw$ near $p$. So $b_{f,p}(s)=(s+1)^2$. Otherwise, $b_{f,p}(s)=(s+1)^3$. Hence
\[ Y=\{(0,0,0)\}\subsetneq Z=D_{\textrm{sing}}.\]
A similar argument shows that if $D=\{x_1x_2\cdots x_n=0\}\subseteq \C^n$, then $Y$ is equal to the origin and $Z=D_{\textrm{sing}}$ (see also \cite[Example 13.3]{Park}).
\end{example}

\begin{example}
Here is a more sophisticated example communicated to us by Sung Gi Park. Let $X=\C^6$ and $f=xyz+uvw$. Here $b_f(s)=(s+1)(s+2)^3$ and $\tilde{\alpha}_f=2$. First, as above, one can show that $Z=D_{\textrm{sing}}$, which is the union of $9$ coordinate planes $(\C_{x,u},\C_{x,v},\C_{x,w},\ldots,\C_{z,w})$. As $\tilde{\alpha}_{xyz}=1$, by above the relevant $Y$ is computed by its vanishing cycle. Applying the Thom–Sebastiani Theorem for vanishing cycles \cite[Theorem 2]{MSS20} to $f_1=xyz$ and $f_2=uvw$, one can show (see, e.g., \cite[Theorem 13.1]{Park}) that $Y$ is the product of the correspondence locus of $f_1$ and $f_2$. Thus, $Y$ is the origin.
\end{example}

\subsection{Vanishing theorems}

\begin{proof}[Proof of Theorem \ref{thm: vanishing theorem for higher multiplier ideals}]
By Proposition \ref{prop: vanishing cycles are twisted D-modules}, $\gr^{W(N)}_{\ell}\gr^V_{\alpha}\cM$ underlies a twisted Hodge module, so the desired vanishing follows from Theorem \ref{thm: vanishing theorem for twisted Hodge module}.
\end{proof}

\begin{proof}[Proof of Theorem \ref{thm: vanishing theorem on abelian varieties}]
Since $\omega_A=\cO_A$, by \eqref{eqn:gr^Wgkalpha as grFgrWgrV}, we have
\[ L^{k+1} \otimes \gr^W_{\ell}\cG_{k,\alpha}(D)\otimes \rho \cong \gr^F_{-n+k}\gr^W_{\ell}\gr^V_{\alpha}\cM\otimes (L\otimes \rho).\]
Since $(L\otimes \rho)\otimes \cO_A(\alpha D)$ is ample for any $\alpha\in (-1,0], \rho\in \Pic^0(A)$, and in addition $\Omega^1_A\cong \cO_A^{\oplus \dim A}$, one can apply the second part of Theorem \ref{thm: vanishing theorem for twisted Hodge module} to $\gr^{W(N)}_{\ell}\gr^V_{\alpha}\cM$ to obtain
\[ H^i(A,L^{k+1}\otimes \gr^W_{\ell}\cG_{k,\alpha}(D)\otimes \rho)=0,\quad \textrm{for all $i\geq 1$}.\]
Then the first statement follows because the weight filtration is finite. Therefore\begin{equation}\label{eqn: vanishing of gkalpha on abelian varieties}
   H^i(A,L^{k+1}\otimes \cG_{k,\alpha}(D)\otimes \rho)=0, \quad \textrm{whenever $\alpha\in (-1,0]$, $i\geq 0 $}.
\end{equation}

For the vanishing of $\cI_{k,\alpha}(D)$, let us treat the cases $\alpha=-1$ and $\alpha=0$ together, by induction on $k\geq 0$. The base case is $k=0$, we have $\cI_{0,-1}(D)=\cJ(A,(1-\epsilon)D)$ by \eqref{eqn: I0 gives the multiplier ideals} for some $0<\epsilon\ll 1$. So the desired vanishing is a consequence of Nadel vanishing theorem (see \cite[Theorem 9.4.8]{Laz04II}). There is nothing to prove for $\alpha=0$. Assume the vanishing holds for some $k-1$ with $k\geq 1$. By \eqref{eqn: griffiths transversality morphism}, one has $\cI_{k,0}(D) \cong \cI_{k-1,-1}(D)$. Consequently, the vanishing for $L^k\otimes \cI_{k,0}(D)\otimes \rho$ follows from the induction hypothesis. Now consider the short exact sequence from Proposition \ref{prop: first properties of higher multiplier ideals for divisors}
\[ 0 \to L^k\otimes \cI_{k,0}(D) \to L^{k+1}\otimes \cI_{k,-1}(D)\to \gr^F_{-n+k}\omega_A(\ast D)\to 0.\]
Since $A\setminus D$ is affine and $\Omega^1_A$ is trivial, one can show that $H^i(A,\gr^F_{-n+k}\omega_A(\ast D)\otimes \rho)=0$ for any $i\geq 1$ and any $\rho\in \Pic^0(A)$ (see \cite[Theorem 28.2]{MPHodgeideal}). Therefore we conclude that $L^{k+1}\otimes \cI_{k,-1}(D)\otimes \rho$ has no higher cohomology as well and this finish the inductive proof.

It suffices to deal with the remaining case $\alpha\in (-1,0)$. Indeed, \eqref{eqn: vanishing of gkalpha on abelian varieties} implies that $L^{k+1}\otimes \cG_{k,\beta}(D)\otimes \rho$ has no higher cohomology for $\beta\in (-1,\alpha)$; this suffices, because we already know the result in the case $\alpha=-1$ and $\cI_{k,\alpha}(D)/\cI_{k,-1}(D)$ is a finite extension of $\cG_{k,\beta}(D)$ for $\beta\in (-1,\alpha)$.
\end{proof}

\section{Application: singularities of theta divisors}\label{sec: application to theta divisors}
Throughout, let $(A,\Theta)$ be an indecomposable principally polarized abelian variety of dimension $g\geq 1$, and let $\Theta$ be a \emph{symmetric} theta divisor.

\subsection{Minimal exponents of theta divisors}\label{sec: minimal exponent of theta divisors}
We first analyze the minimal exponent for the boundary cases in Conjecture \ref{conjecture: Casalaina Martin stronger}.
\begin{thm}\label{thm: minimal exponent of boundary theta divisors} 
Let $C$ be a smooth projective curve and $A=\Jac(C)$, then $1<\tilde{\alpha}_{\Theta}\leq 2$. If $C$ is Brill-Noether general, then $\tilde{\alpha}_{\Theta}=2$; if $C$ is hyperelliptic, then $\tilde{\alpha}_{\Theta}=\frac{3}{2}$. If $A$ is the intermediate Jacobian of a smooth cubic threefold, then $\tilde{\alpha}_{\Theta}=\frac{5}{3}$.
\end{thm}

\begin{remark}\label{remark: numerical evidence of the strong conjecture}
This provides some numerical evidences for Conjecture \ref{conjecture: Casalaina Martin stronger}: assume there exists a $m\geq 2$ such that $\dim \Sing_m(\Theta)\geq g-2m+1$, then Corollary \ref{corollary: upper bound of minimal exponent article} gives $\tilde{\alpha}_{\Theta}\leq \frac{2m-1}{m}<2$,
which are satisfied by the boundary examples. This is in fact the starting point of our proof.
\end{remark}
 
\begin{proof}
Let $C$ be a smooth projective curve. By \cite[Chapter IV, Lemma (3.3)]{ACGH} we know that every component of $\Sing_m(\Theta)$ has dimension greater or equal to Brill-Noether number     \[\rho=g-(m-1+1)(g-(g-1)+m-1)=g-m^2.\] 
    Therefore Corollary \ref{corollary: upper bound of minimal exponent article} gives $ \tilde{\alpha}_{\Theta} \leq \min_{m\geq 2} \frac{m^2}{m}=2$. 
    
    If $C$ is Brill-Noether general, then $\tilde{\alpha}_{\Theta}=2$ by \cite[Theorem 1.6(ii)]{budur2023local}. If $C$ is hyperelliptic, then by \cite[Chapter IV, Theorem 5.1]{ACGH} one has $\Sing_m(\Theta)$ is an irreducible variety of dimension $g-2m+1$ for each $m\geq 2$.  \cite[Theorem A]{SY22} provides a log resolution of $(A,\Theta)$ by iteratively blowing up of (the proper transform of) $\Sing_{m}(\Theta)$. Then by Corollary \ref{corollary: upper bound of minimal exponent article} and \cite[Corollary D]{MP18Vfiltration}, one has $\frac{3}{2}\leq \tilde{\alpha}_{\Theta}\leq\min_{m\geq 2}\frac{2m-1}{m}=\frac{3}{2}$.

    If $A$ is the intermediate Jacobian of a smooth cubic threefold,  \cite[p. 348]{Mumford} proved that $\Theta$ has a unique isolated ordinary singularity of multiplicity $3$, so $\tilde{\alpha}_{\Theta}=5/3$.
\end{proof}

\subsection{Properties of the center of minimal exponent}
By \cite[Theorem 1]{EL97} and  \cite[Theorem 0.4]{Saitorational}, $\Theta$ is reduced and has rational singularities, which is equivalent to \begin{equation}\label{eqn: minimal exponent >1}
\tilde{\alpha}_{\Theta} >1.
\end{equation}
Hence Lemma \ref{lemma: minimal exponent via Gkalpha} implies that
\begin{equation}\label{eqn: Kollar multiplier ideal ppav}
\cI_{0,-1}(\Theta)=\cO_A, \quad \cG_{0,\alpha}(\Theta)=0, \quad \forall -1<\alpha<0, \quad \cG_{0,-1}(\Theta)=\cO_{\Theta}, \quad \cG_{1,0}(\Theta)=0.
\end{equation}
Furthermore, one can also show that $N$ acts trivially on $\cG_{0,-1}(\Theta)$; we omit the details here. Consequently, there is no interesting information left in the usual multiplier ideals of $\Theta$ and $\cG_{0,-1}(\Theta)$. The idea is to use the first higher multiplier ideal $\cI_{1,\alpha}(\Theta)$ to get more information on the singularities of the theta divisor. 
\begin{lemma}\label{lemma: I1<0=OA}
 We have $\cI_{1,<0}(\Theta)=\cO_A$ and $\cI_{1,<-1}(\Theta)=\cO_A(-\Theta)$.
\end{lemma}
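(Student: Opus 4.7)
The plan is to deduce both equalities directly from two tools already established in the paper: the characterization of the minimal exponent as the first jumping number (Lemma \ref{lemma: minimal exponent via Gkalpha}) together with the Ein--Lazarsfeld rationality estimate $\tilde{\alpha}_{\Theta}>1$ recorded in \eqref{eqn: minimal exponent >1}; and the periodicity isomorphism \eqref{eqn: periodicity morphism} in Proposition \ref{prop: first properties of higher multiplier ideals for divisors}(III).

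For the first equality, I would invoke Lemma \ref{lemma: minimal exponent via Gkalpha}, which says that for $(k,\alpha)\in \mathbb{N}\times (-1,0]$, one has $\cI_{k,<\alpha}(\Theta)=\cO_A$ whenever $k-\alpha<\tilde{\alpha}_{\Theta}$. Applied to $(k,\alpha)=(1,0)$, we get $k-\alpha=1<\tilde{\alpha}_{\Theta}$ by \eqref{eqn: minimal exponent >1}, and hence $\cI_{1,<0}(\Theta)=\cO_A$.

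For the second equality, I would translate from $\alpha=-\epsilon$ to $\alpha=-1-\epsilon$ using the periodicity isomorphism. Writing $\cI_{1,<-1}(\Theta)=\cI_{1,-1-\epsilon}(\Theta)$ for $0<\epsilon\ll 1$ (by right-continuity/discreteness of the filtration), and applying \eqref{eqn: periodicity morphism} with $\alpha=-\epsilon<0$, one obtains
\[
\cI_{1,-1-\epsilon}(\Theta)\ \cong\ \cI_{1,-\epsilon}(\Theta)\otimes \cO_A(-\Theta).
\]
Since $\cI_{1,-\epsilon}(\Theta)=\cI_{1,<0}(\Theta)=\cO_A$ by the first step, this yields $\cI_{1,<-1}(\Theta)\cong \cO_A(-\Theta)$.

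There is no serious obstacle: the argument is essentially formal, relying only on the statement $\tilde{\alpha}_{\Theta}>1$ (which is the Ein--Lazarsfeld/Koll\'ar rationality of $\Theta$ translated into minimal exponent language) and the general periodicity of higher multiplier ideals. The only subtle point to check is that the inclusion $\cI_{k,<\alpha}(D)\subsetneq \cO_X$ in Lemma \ref{lemma: minimal exponent via Gkalpha} is used in contrapositive form, which is immediate from the definition of the minimum.
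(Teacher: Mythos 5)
Your proof is correct, and for the second equality it coincides with the paper's: both apply the periodicity isomorphism \eqref{eqn: periodicity morphism} (an isomorphism for $\alpha<0$, combined with discreteness/right-continuity, i.e.\ $\cI_{1,<\alpha}=\cI_{1,\alpha-\epsilon}$) to shift $\cI_{1,<0}(\Theta)=\cO_A$ down by one and obtain $\cI_{1,<-1}(\Theta)=\cO_A(-\Theta)$. For the first equality your route is genuinely different from the paper's. You invoke Lemma \ref{lemma: minimal exponent via Gkalpha} in contrapositive form: $(k,\alpha)=(1,0)$ lies in the index set $\N\times(-1,0]$, and $1-0=1<\tilde{\alpha}_{\Theta}$ by \eqref{eqn: minimal exponent >1}, so $\cI_{1,<0}(\Theta)$ cannot be a proper ideal; this is a legitimate use of the minimum characterization and is not circular, since that lemma is established long before the theta-divisor section. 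The paper instead argues through graded pieces: Koll\'ar's result \eqref{eqn: Kollar multiplier ideal ppav} together with Lemma \ref{lemma: consequence of Ein Laz result} (triviality of the adjoint ideal, Ein--Lazarsfeld) gives $\cG_{1,\alpha}(\Theta)=0$ for all $\alpha\in[0,1)$ via the isomorphism \eqref{eqn: gkalpha to gk+1alpha+1}, and then $\cI_{1,<1}(\Theta)=\cO_A$ from Proposition \ref{prop: first properties of higher multiplier ideals for divisors}(II) descends to $\cI_{1,<0}(\Theta)=\cO_A$. Both arguments ultimately rest on the rationality of $\Theta$; yours channels it solely through the inequality $\tilde{\alpha}_{\Theta}>1$ (Ein--Lazarsfeld plus Saito's criterion) and is shorter, because the propagation of vanishing across the interval is already packaged inside the proof of Lemma \ref{lemma: minimal exponent via Gkalpha}, whereas the paper's version makes the vanishing of the individual graded pieces $\cG_{1,\alpha}(\Theta)$ explicit, in line with the alternative derivation the paper itself records in Lemma \ref{lemma: consequence of Ein Laz result}. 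No gap remains in your argument.
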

\begin{proof}
Using \eqref{eqn: gkalpha to gk+1alpha+1} and \eqref{eqn: Kollar multiplier ideal ppav}, we have $\cG_{1,\alpha}(\Theta)=0$ for $\alpha\in [0,1)$.  Since $\cI_{1,<1}(\Theta)=\cO_A$ by Proposition \ref{prop: first properties of higher multiplier ideals for divisors}, this gives $\cI_{1,<0}(\Theta)=\cO_A$. The second identity follows from \eqref{eqn: periodicity morphism}.
\end{proof}

\begin{lemma}\label{lemma: vanishing for the first ideal on abelian variety}
For $\alpha \in [-1,0)$, we have 
\[H^i\left(A,\cI_{1,\alpha}(\Theta)\otimes \cO_A(2\Theta)\otimes \rho\right)=0, \quad \forall i>0, \rho\in \Pic^0(A). \]
\end{lemma}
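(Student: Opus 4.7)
The plan is to apply Theorem \ref{thm: vanishing theorem on abelian varieties} directly with $D = \Theta$ and $k = 1$. Since $(A,\Theta)$ is a principally polarized abelian variety, the line bundle $L = \cO_A(\Theta)$ is ample, so the hypotheses of that theorem are satisfied. Part (3) of Theorem \ref{thm: vanishing theorem on abelian varieties}, specialized to $k = 1$ and $\alpha \in [-1,0)$, gives
\[
H^i\bigl(A, L^{2} \otimes \cI_{1,\alpha}(\Theta) \otimes \rho\bigr) = 0 \quad \text{for all } i > 0 \text{ and } \rho \in \Pic^0(A),
\]
which is exactly the stated vanishing since $L^2 = \cO_A(2\Theta)$.

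There is essentially no additional work to perform; the lemma is a direct specialization of the general vanishing theorem on abelian varieties that was already established. The main content lies in Theorem \ref{thm: vanishing theorem on abelian varieties} itself, which was deduced by combining the twisted Hodge module vanishing (Theorem \ref{thm: vanishing thm twisted HM introduction}) with Proposition \ref{prop: vanishing cycles are twisted D-modules} identifying $\gr^W_{\ell}\gr^V_{\alpha}\cM$ as an $\alpha L$-twisted polarized Hodge module, together with the short exact sequences from Proposition \ref{prop: first properties of higher multiplier ideals for divisors} used to pass from the graded pieces $\cG_{1,\beta}(\Theta)$ to the ideals $\cI_{1,\alpha}(\Theta)$. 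Thus no new argument is required beyond invoking the theorem.
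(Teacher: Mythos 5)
Your proposal is correct and matches the paper exactly: the lemma is stated there as an immediate consequence of Theorem \ref{thm: vanishing theorem on abelian varieties}, and part (3) of that theorem with $D=\Theta$ (ample since $(A,\Theta)$ is principally polarized), $k=1$, and $\alpha\in[-1,0)$ gives precisely the asserted vanishing $H^i\bigl(A,\cO_A(2\Theta)\otimes\cI_{1,\alpha}(\Theta)\otimes\rho\bigr)=0$ for $i>0$ and $\rho\in\Pic^0(A)$. No further argument is needed.
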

\begin{proof}It follows from  Theorem \ref{thm: vanishing theorem on abelian varieties}.\end{proof}

Let $Y$ be the center of minimal exponent for $(A,\Theta)$ in the sense of Definition \ref{definition: center of minimal exponent}. Recall in Remark \ref{remark: numerical evidence of the strong conjecture} that, if we want to prove Conjecture \ref{conjecture: Casalaina Martin stronger} by contradiction, we can assume there exists a $m\geq 2$ such that $\dim \Sing_m(\Theta)\geq g-2m+1$, then we must have $\tilde{\alpha}_{\Theta}<2$. Hence we can assume $\tilde{\alpha}_{\Theta}<2$ in the remaining of this section. In the proofs of the next two lemmas, we write $\tilde{\alpha}_{\Theta}=1-\alpha$ for some $\alpha\in (-1,0)$ and let $\ell$ be the maximal integer such that $\gr^W_{\ell}\cG_{1,\alpha}(\Theta)\neq 0$; note that $\cO_Y=\gr^W_{\ell}\cG_{1,\alpha}(\Theta)$.
\begin{lemma}\label{lemma: the center of minimal exponent for ppav}
Assume $1<\tilde{\alpha}_{\Theta}<2$. Then  $Y$ is connected and generates $A$ as an abelian variety. Moreover, $2\Theta|_Y$ is not very ample.
\end{lemma}
\begin{proof}
By \eqref{eqn: ideal sheaf of center of minimal exponent}, there is a short exact sequence
\[ 0\to \cI_{1,<\alpha}(\Theta) \to \cI_Y \to W_{\ell-1}\cG_{1,\alpha}(\Theta) \to 0.\]
Since $\alpha\in (-1,0)$, Theorem \ref{thm: vanishing theorem on abelian varieties} also gives
    \[ H^i(A,\cO_A(2\Theta)\otimes W_{\ell-1}\cG_{1,\alpha}(\Theta)\otimes L)=0, \quad \forall i>0, L\in \Pic^0(A).\]
Combined with Lemma \ref{lemma: vanishing for the first ideal on abelian variety}, we have 
\[H^i(A,\cI_Y\otimes \cO_A(2\Theta)\otimes L)=0, \quad \forall i>0, L\in \Pic^0(A).\]
As a consequence, we have
\begin{equation}\label{eqn: surjectivity of 2 theta}H^0\left(A,\cO_A(2\Theta)\otimes L\right) \twoheadrightarrow H^0\left(Y,(\cO_A(2\Theta)\otimes L)|_{Y}\right)\end{equation}
is surjective for any $L\in \Pic^0(A)$. 

We now prove the connectivity of $Y$. Recall that we assume $\Theta$ is symmetric,  i.e. $[-1]^{\ast}\cO_A(\Theta)\cong \cO_A(\Theta)$, where $[-1]:A\to A$ sends $x$ to $-x$. This induces an isomorphism on $\gr^V_{\alpha}s_{+}\omega_A$ as mixed Hodge modules, where $s$ is the total graph embedding of $\cO_A(\Theta)$. So it induces an isomorphism on $\cO_Y=\gr^F_{-g+1}\gr^W_{\ell}\gr^V_{\alpha}s_{+}\omega_A\otimes \cO_A(-\Theta)$. It follows that $Y$ must be symmetric. It is known that $|2\Theta|$ is base-point-free and the resulting morphism $A \to \P^{2^g-1}=\P H^0(A,2\Theta)$
factors through the quotient of $A$ by the involution $x\mapsto -x$.  This implies that $Y$ must be connected: otherwise we could translate $Y$ so that one connected component contains a point $x_0$ and another contains the point $-x_0$, which would contradict the fact that $|2\Theta|$ does not separate these two points.  

Finally let us prove the generation statement. Let $B$ be the subtorus generated by $Y$. The argument above shows that $2\Theta|_{Y}$ cannot separate $x_0$ and $-x_0$, thus not very ample. By \eqref{eqn: surjectivity of 2 theta}, $2\Theta|_B$ is also not very ample. It follows from \cite[Theorem A]{Ohbuchi88} that $B$ is isomorphic to a product of polarized abelian varieties with at least one positive dimensional principally polarized factor. We conclude $B=A$ because $(A,\Theta)$ is indecomposable.

\end{proof}

\begin{lemma}\label{lemma: vanishing theorem twist by 2Theta on Y}
    With the same assumption as in Lemma \ref{lemma: the center of minimal exponent for ppav}, then
    \begin{equation}\label{eqn: vanishing of Hi with twists on A} H^i(Y,\cO_A(2\Theta)|_Y\otimes L)=0, \quad \forall i>0, L\in \Pic^0(A).\end{equation}
Moreover, if $Y$ is smooth, then $2\Theta|_Y$ is the smallest piece of the Hodge filtration in a $(1+\alpha)\Theta|_Y$-twisted Hodge module on $Y$ and
        \begin{equation}\label{eqn: vanishing of twisted 2Theta on Y}H^i(Y,\cO_A(2\Theta)|_Y\otimes L_Y)=0,\end{equation}
for all $i>0$ and line bundle $L_Y$ on $Y$ such that $\cO_A((1+\alpha)\Theta)|_Y\otimes L_Y$ is ample on $Y$.

\end{lemma}

\begin{proof}
Denote by $\cM$ the direct image of $\omega_A$ along the total embedding of $\cO_A(\Theta)$. Since $\cO_Y=\gr^W_{\ell}\cG_{1,\alpha}(\Theta)$, it follows from \eqref{eqn:gr^Wgkalpha as grFgrWgrV} that
\[ \cO_A(\Theta)\otimes \cO_Y =\gr^F_{-g+1}\gr^W_{\ell}\gr^V_{\alpha}\cM.\]
By construction, $\gr^F_{k}\gr^W_{\ell}\gr^V_{\alpha}\cM=0$ for all $k\leq -g-1$. On the other hand, since $\alpha\in (-1,0)$, using \eqref{eqn:gr^Wgkalpha as grFgrWgrV} and \eqref{eqn: Kollar multiplier ideal ppav} one has
\[ \gr^F_{-g}\gr^W_{\ell}\gr^V_{\alpha}\cM=\cO_A(\Theta)\otimes \cG_{0,\alpha}(\Theta)=0.\]
Therefore $\cO_A(\Theta)\otimes \cO_Y$ is the smallest piece in the Hodge filtration of the $\alpha \Theta$-twisted Hodge module $\gr^W_{\ell}\gr^V_{\alpha}\cM$ (Proposition \ref{prop: vanishing cycles of Hodge modules}). Remark \ref{remark: change the twisting of twisted D module} implies that
\[ \cO_A(2\Theta)\otimes \cO_Y=\gr^F_{-g+1}((\gr^W_k\gr^V_{\alpha}\cM)\otimes \cO_A(\Theta))\]
 is the smallest piece in the Hodge filtration of a $(1+\alpha)\Theta$-twisted Hodge module. The assumption $\alpha>-1$ implies that the $\Q$-divisor $(1+\alpha)\Theta+L$ is ample on $A$ for all $L\in \Pic^0(A)$, hence we conclude \eqref{eqn: vanishing of Hi with twists on A} from Theorem \ref{thm: vanishing theorem for twisted Hodge module}.

Assume $Y$ is smooth, Theorem \ref{thm: twisted Kashiwara equivalence for Hodge modules} shows that $\cO_A(2\Theta)|_Y$ is the direct image of the first step in the Hodge filtration of an $(1+\alpha)\Theta|_Y$-twisted Hodge module on $Y$. Therefore \eqref{eqn: vanishing of twisted 2Theta on Y} follows from Theorem \ref{thm: vanishing theorem for twisted Hodge module}.
\end{proof}

\subsection{Proof of Theorem \ref{thm: partial solution of Casalaina Martin conjecture}}\label{sec: proof of the partial solution}
Suppose that for some $m\geq 2$, we have
\begin{equation}\label{eqn: violating dimension inequality}
    d\colonequals \dim \Sing_m(\Theta)\geq g-2m+1.
\end{equation} 
The Ein-Lazarsfeld bound \eqref{eqn: EinLazarsfeld bound} gives $d\leq g-m-1$, thus we get $g-d=m+r$ for
\begin{equation}\label{eqn: 1 < r< m-1}
    1\leq r\leq m-1.
\end{equation}
According to Theorem \ref{thm: numerical criterion for nontriviality}, we can conclude that 
\[ \cI_{1,<\beta}(\Theta)\subsetneq \cO_A, \quad \textrm{for some $\beta\geq -r/m>-1$}.\] Let $\alpha\in (-1,0]$ be the  maximal value for which $\cI_{1,<\alpha}(\Theta)\subsetneq \cO_A$. Lemma \ref{lemma: I1<0=OA} gives $\cI_{1,<0}(\Theta)=\cO_A$, so\begin{equation}\label{eqn: bound on the first jumping number alpha}
    -1<-r/m\leq \alpha <0.
\end{equation}
Let $\tilde{\alpha}_{\Theta}$ be the minimal exponent of $\Theta$. Corollary \ref{corollary: upper bound of minimal exponent article} implies that
\[ \tilde{\alpha}_{\Theta} \leq \frac{\codim_A(\Sing_m(\Theta))}{m}\leq \frac{2m-1}{m}<2.\]
Lemma \ref{lemma: minimal exponent via Gkalpha} implies that $\tilde{\alpha}_{\Theta}=1-\alpha$, and together with \eqref{eqn: minimal exponent >1} one has
\begin{equation}\label{eqn: bound on the minimal exponent}
    1<\tilde{\alpha}_{\Theta}\leq \frac{g-d}{m}<2.
\end{equation}
Let $Y$ be the center of minimal exponent of $(A,\Theta)$. As $1<\tilde{\alpha}_{\Theta}<2$, one can apply Lemma \ref{lemma: the center of minimal exponent for ppav} and Theorem \ref{thm: rationality and normality of the center of minimal exponent article} to conclude that $Y$ is irreducible and normal. Since $\dim Y=1$ by assumption, it must be a smooth projective curve. For the rest of the proof, set \[ C\colonequals Y, \quad e\colonequals \Theta\cdot C=\deg_{C}(\Theta|_C), \quad g(C)\colonequals \textrm{genus of $C$}.\]
By Lemma \ref{prop: jumping loci is a hyperelliptic curve} below, the curve $C$ must be hyperelliptic and $e=g(C)$. 

Now let us finish the proof of Theorem \ref{thm: partial solution of Casalaina Martin conjecture}. Since $C$ is smooth, Lemma \ref{lemma: vanishing theorem twist by 2Theta on Y} implies that $\cO_A(2\Theta)|_C$ is the smallest piece in the Hodge filtration of a $(1+\alpha)\Theta|_C$-twisted Hodge module on $C$. Then Corollary \ref{cor: degree bound of lowest twisted Hodge module} and $e=g(C)$ give
\[ 2e=\deg_C(\cO_A(2\Theta)|_C)\geq \deg \omega_C+(1+\alpha)\deg_C(\cO_A(\Theta)|_C)=2e-2+(1+\alpha)e.\]
This is equivalent to
\begin{equation}\label{eqn: 1+alpha <2}
(1+\alpha)e\leq 2.
\end{equation}
On the other hand, Lemma \ref{lemma: the center of minimal exponent for ppav} says that $C$ generates $A$ and so the Matsusaka-Ran theorem \cite{Ran} gives $e=\Theta\cdot C\geq \dim A=g$, with equality if and only if $A=\Jac(C)$. Recall that $d=\dim \Sing_m(\Theta)\geq 0$ and $r=g-d-m$. Then \eqref{eqn: 1 < r< m-1} and \eqref{eqn: bound on the first jumping number alpha} give
\[\alpha \geq -r/m, \quad 1\leq r \leq m-1.\] 
Combining with the inequality \eqref{eqn: 1+alpha <2}, we see that
\begin{equation}\label{eqn: m+r+d<2m/m-r}
 m+r\leq m+r+d=g\leq e \leq \frac{2}{1+\alpha}\leq \frac{2m}{m-r}.
\end{equation}
This implies that $m^2-2m\leq r^2$. Suppose $r\leq m-2$, then $m^2-2m\leq m^2-4m+4$. Therefore, we have $m\leq 2$ and $r\leq 0$, which contradicts with $1\leq r\leq m-1$. In particular, we conclude that $r=m-1$.  Plugging $r=m-1$ back to \eqref{eqn: m+r+d<2m/m-r}, we see that
\[ 2m+d-1\leq 2m.\]
Therefore $d=0$ or $d=1$, and $e=g$ or $e=g+1$.

If $e=g$, then we see that the abelian variety $A$ contains a curve $C$ generating $A$ with
\[ C\cdot \Theta=e=g=\dim A.\]
By the Matsusaka-Ran criterion for Jacobians \cite{Ran}, we conclude that  $A=\mathrm{Jac}(C)$.

If $e=g+1$, we claim that this is impossible (we thank Nelson Alvarado for pointing this out). In this case, $C$ is an irreducible curve generating $A$ and
\[ C\cdot \Theta=e=g+1=\dim A+1.\]
Then a result of Debarre (cf. \cite[Proposition 6.7]{Debarre94}) implies that $A$ must be isomorphic to the Jacobian of $C$. Hence $g(C)=\dim A$, which contradicts with the fact $g(C)=e=\dim A+1$. Therefore the case $e=g+1$ cannot happen.


\begin{lemma}\label{prop: jumping loci is a hyperelliptic curve}
The curve $C$ must be hyperelliptic and $e=g(C)$.
\end{lemma}

\begin{proof}
Since $1<\tilde{\alpha}_{\Theta}<2$, Lemma \ref{lemma: vanishing theorem twist by 2Theta on Y} gives $H^1(C,\cO_A(2\Theta)|_C)=0$. Therefore by Riemann-Roch we have
\[ \dim H^0(C,\cO_A(2\Theta)|_{C})=2e-g(C)+1.\]
It follows from \eqref{eqn: surjectivity of 2 theta} that the morphism defined by $|2\Theta|_C|$ maps $C$ two-to-one to a non-degenerate curve of degree $e$ in $\P^{2e-g(C)}$. Castelnuovo's theorem on degrees of non-degenerate curves gives us the bound 
\begin{equation}\label{eqn: Castelnuovo bound}
    e\geq 2e-g(C), \quad \textrm{or equivalently }  g(C)\geq e.
\end{equation}

On the other hand, for any $L_C\in \Pic^0(C)$, as $\alpha>-1$ by \eqref{eqn: bound on the first jumping number alpha}, we know $(1+\alpha)\Theta|_{C}\otimes L_C$ is ample on $C$. Hence Lemma \ref{lemma: vanishing theorem twist by 2Theta on Y} gives 
\[ H^1(C,\cO_A(2\Theta)|_C\otimes L_C)=0, \quad \forall L_C\in \Pic^0(C).\]
By Serre duality, this is equivalent to
\[ H^0(C,\omega_C\otimes \cO_A(-2\Theta)|_C\otimes L_C)=0, \quad \forall L_C\in \Pic^0(C).\]
Note that if $P$ is a line bundle on $C$ such that $H^0(C,P\otimes L)=0$ for all $L\in \Pic^0(C)$, then $\deg P<0$. Hence
\[2e=\deg_C(2\Theta|_C)>\deg_C(\omega_C)= 2g(C)-2,\]
and hence that $e\geq g(C)$. We therefore have equality in the Castelnuovo bound \eqref{eqn: Castelnuovo bound}, and so the image of $C$ in $\P^{2e-g(C)}=\P^{e}$ must be a rational normal curve. We conclude that $C$ must be hyperelliptic, as a 2:1 cover of a rational normal curve.
\end{proof}
\subsection{Proof of Theorem \ref{thm: nearby cycle of hyperelliptic curve}}

Let us write $\mu \colon \tilde{A} \to A$ for the log resolution in \cite[Theorem A]{SY22}. Set $n = \lfloor \frac{g-1}{2} \rfloor$ and abbreviate $W_{g-1}^r$ as $W^r$, so that $\Theta = W^0$. Write
\[
	\mu^{\ast} \Theta = \sum_{r=0}^n (r+1) D_r,
\]
where $D_r$ sits over $W^r$ and $D_0 = \tilde{\Theta}$ the proper transform of $\Theta$. The direct image $\derR \mu_{\ast} \QQ_{\tilde{A}}[g]$
contains $\QQ_A[g]$ as a direct summand, and all the other summands are supported
inside the singular locus of $\Theta$. Consider an affine open subset of $A$ over which $\Theta$ is defined by a holomorphic function $f$. To simplify the notation, we still denote by $W^r$ the intersection of $W^r$ with this affine open. Because nearby cycles commute with direct
images by proper morphisms, and because $\psi_f$ is trivial on perverse sheaves with
support inside of $\Theta$, so
\begin{equation}\label{eqn: vanishing cycle proper base change}
	\psi_{f, \lambda} \QQ_A[g] \cong \derR \mu_{\ast} \bigl( \psi_{\mu \circ f,
		\lambda} \QQ_{\tilde{A}}[g] \bigr).
\end{equation}
Let $x \in W^r \setminus W^{r+1}$, let $X = \mu^{-1}(x)$ be the fiber in
the log resolution, and write $i$ and $i_x$ for resulting closed embeddings, as in
the following commutative diagram:
\[
	\begin{tikzcd}
		X \rar{i} \dar{p} & \tilde{A} \dar{\mu} \\
		\pt \rar{i_x} & A
	\end{tikzcd}
\]
By proper base change and \eqref{eqn: vanishing cycle proper base change}, we have
\[
	\iu_x \psi_{f, \lambda} \QQ_A[g] \cong \iu_x \derR \mu_{\ast} \bigl( \psi_{\mu
		\circ f, \lambda} \QQ_{\tilde{A}}[g] \bigr)
		\cong \derR \pl \bigl( \iu \psi_{\mu \circ f, \lambda} \QQ_{\tilde{A}}[g] \bigr).
\]

We can compute the restriction on the right-hand side in two steps. First, we
restrict to the divisor $D_r$, the unique component of the normal crossing divisor
that contains $X$; according to Lemma \ref{lemma: restriction to a divisor}, this
restriction is trivial unless $\lambda^{r+1} = 1$, in which case we get the
$\ast$-extension of the rank-one local system whose monodromy around $D_r \cap D_j$
is $\lambda^{j+1}$, with a shift by $g-1$. We then have to restrict further to $X$. Now according to \cite[Corollary 6.6]{SY22}, $X$ is isomorphic to
Bertram's log resolution of $\PP^{2r}$ along the top secant variety of the rational
normal curve of degree $2r$. Moreover, \cite[Claim 6.4]{SY22} tells us that the morphism
\[
	\mu^{-1} \bigl( W^r \setminus W^{r+1} \bigr) \to W^r \setminus W^{r+1}
\]
is smooth on every stratum of the normal crossing divisor $D = D_0 + \dotsb + D_n$,
and therefore a stratified fiber bundle. Therefore restriction to $X$ is
non-characteristic for the $\ast$-extension of our local system. The conclusion is
that the perverse sheaf $\iu \psi_{\mu \circ f, \lambda} \QQ_{\tilde{A}}[g]$ is isomorphic to the $\ast$-extension of the rank-one local system on $X \setminus
(D_0 \cup \dotsb \cup D_{r-1})$ whose monodromy is $\lambda^{j+1}$ along the divisor
$X \cap D_j$, still with a shift by $g-1$. 

Let $S \subseteq \PP^{2r}$ denote the top secant variety; this is a
hypersurface of degree $r+1$. Then $X \setminus (D_0 \cup \dotsb \cup D_{r-1})$ is
isomorphic to $\PP^{2r} \setminus S$ by \cite[Corollary B]{SY22}, and so the perverse sheaf on $X$ is just the
$\ast$-extension of $L_p[g-1]$, where $L_p$ is the local system on $\PP^{2r} \setminus
S$ that we get from the $(r+1)$-fold cyclic covering $\tilde{S} \to \PP^{2r}$. Here
$0 \leq p \leq r$ is again the unique integer such that $\lambda = e^{2 \pi i
p/(r+1)}$. The cohomology of the $\ast$-extension is therefore isomorphic to the
appropriate $\mu_{r+1}$-eigenspace in the cohomology group
\[
	H^{g-1+\ast}(\tilde{S} \setminus S, \CC) \cong H^{g-1+\ast}(F_r, \CC),
\]
where $F_r$ is the affine Milnor fiber of the defining function
\[
	\det \begin{pmatrix}
		x_0 & x_1 & \dots & x_r \\
		x_1 & x_2 & \dots & x_{r+1} \\
		\vdots & \vdots & \ddots & \vdots \\
		x_r & x_{r+1} & \dots & x_{2r}
	\end{pmatrix}
\]
of $S$. These cohomology groups, together with the $\mu_{r+1}$-action, were computed
by Brogan \cite{Dan}. The relevant result is that each eigenspace has dimension $1$, and occurs
either in cohomological degree $2r$ or in a strictly smaller degree, depending on
whether or not $\lambda$ is a primitive $(r+1)$-th root of unity.

To summarize, for a point $x \in W^r \setminus W^{r+1}$, the stalk $\iu_x \psi_{f, \lambda} \QQ_A[g]$ is trivial unless $\lambda^{r+1} = 1$. If $\lambda$ is an $(r+1)$-th root of unity,
then the stalk consists of a single one-dimensional vector space in a certain degree;
the shifts work out in such a way that this degree is $2r+1-g = -\dim W^r$ if $\lambda$ is a
primitive $(r+1)$-th root of unity, and is strictly smaller otherwise. Just as in Brogan's work, we will show that this fact is enough to show that $K_{\lambda} \colonequals
\psi_{f,\lambda} \QQ_A[g]$ must be the intermediate extension of a rank-one local
system on $W^r \setminus W^{r+1}$. The following lemma formalizes the main point.
\begin{lemma}\label{lem: Klambda support on Wr}
Let $\lambda$ be a
primitive $(r+1)$-th root of unity. Then $K_{\lambda}$ is supported on $W^r$ and does not have quotient supported in a proper subvariety of $W^r$.
\end{lemma}
\begin{proof}[Proof of the lemma]According to the calculation above, the stalk of $K_{\lambda}$ is zero at every
	point of $A \setminus W^r$, whereas the stalk at points in $W^r \setminus W^{r+1}$
	is a one-dimensional vector space in degree $-\dim W^r$. This shows that $\Supp
	K_{\lambda} = W^r$. Now suppose that $K_{\lambda}$ has a nontrivial quotient with
	support equal to a proper subvariety $Z \subset W^r$. Let $x \in Z$ be a generic
	point; then $x \in W^k \setminus W^{k+1}$ for some $k > r$. Generically on $Z$,
	the quotient is a local system of some rank; this gives us a nontrivial morphism
	\[
		K_{\lambda} \to (i_x)_{\ast} V[\dim Z],
	\]
	where $V$ is a nonzero vector space. This implies that $H^{-\dim Z} \iu_x
	K_{\lambda} \neq 0$; but as $\dim Z < \dim W^r$, this contradicts the result
	about the stalk of $K_{\lambda}$ from above.
\end{proof}
Now we use Lemma \ref{lem: Klambda support on Wr} to prove that $K_{\lambda}$ must be the intermediate extension of a
rank-one local system on $W^r \setminus W^{r+1}$. The reason is the existence of the
weight filtration. Indeed, $K_{\lambda}$ is a direct summand in $\psi_f \QQ_A[g]$, and
therefore has a weight filtration with pure subquotients; by construction, the weight
filtration is symmetric around $g-1$. Since $K_{\lambda} / W_{g-1} K_{\lambda}$ would
be supported in a proper subset of $W^r$, the argument above shows that $K_{\lambda} = W_{g-1}
K_{\lambda}$.  Due to the symmetry of the weight filtration, this means that
$K_{\lambda}$ is actually pure of weight $g-1$. Because of the decomposition by strict
support, we conclude that $K_{\lambda}$ has strict support $W^r$.

Finally, because $\tilde{\alpha}_{\Theta}=3/2$ by Theorem \ref{thm: minimal exponent of boundary theta divisors} and $\psi_{f,e^{2\pi i\cdot (-1/2)}}\QQ_A[g]$ is pure, we see that the center of minimal exponent of $(A,\Theta)$ is the vanishing locus of $\cI_{1,<-1/2}(\Theta)$, which is
\[ \{x \in \Theta \mid \tilde{\alpha}_{\Theta,x}=3/2\}.\]
Using the log resolution in \cite{SY22}, \cite[Corollary D]{MP18Vfiltration} and Corollary \ref{corollary: upper bound of minimal exponent article}, one has \[ 3/2=\tilde{\alpha}_{\Theta}=\min_{x\in \Theta}\tilde{\alpha}_{\Theta,x}\leq \tilde{\alpha}_{\Theta,x}\leq \frac{\mathrm{codim}_{A}(\Sing_2(\Theta))}{2}=3/2, \quad \forall x\in \Sing_2(\Theta)\setminus\Sing_3(\Theta).\]
By the closedness, the vanishing locus of $\cI_{1,<-1/2}(\Theta)$ is exactly the singular locus of $\Theta$.

\subsection{A question}
For any indecomposable p.p.a.v $(A,\Theta)$, one has $\tilde{\alpha}_{\Theta}>1$ by \eqref{eqn: minimal exponent >1}. Inspired by Conjecture \ref{conjecture: Casalaina Martin stronger} and Theorem \ref{thm: minimal exponent of boundary theta divisors}, we ask
\begin{question}
    Let $(A,\Theta)$ be an indecomposable principally polarized abelian variety. Does one always have $\tilde{\alpha}_{\Theta}\geq \frac{3}{2}?$ If it is true, do hyperelliptic Jacobians characterize the equality case?
\end{question}

\appendix

\section{$V$-filtrations for powers of functions}\label{sec:  technical results of Vfiltrations}
We prove a formula for the nearby cycles of a $\sD$-module with
respect to a power of a function, which is used for the definition of higher multiplier ideals of $\Q$-divisors. This is closely related to \cite[Proposition 9.9.1]{SSMHMproject}, but we choose to include it here for reader's convenience. Let $f:X \to \mathbb{C}$ be a nonconstant
holomorphic function on a complex manifold $X$. We want to relate the nearby cycles
and the $V$-filtration with respect to the two functions $f$ and $f^m$. Let $M\in
\MHM(X)$, and denote by $(\cM,F_{\bullet}\cM)$ the underlying filtered right
$\sD$-module. For $m\geq 1$, consider the graph embedding 
\[
	i_m:X \to X\times \C, \quad i_m(x)= \bigl( x,f(x)^m \bigr).
\]
In order to keep the notation consistent, we set
\[
	M_m\colonequals (i_m)_{\ast}M\in \MHM(X\times \C),
\]
and denote the underlying filtered $\sD$-module by
$(\cM_m,F_{\bullet}\cM_m)=(i_m)_{+}(\cM,F_{\bullet}\cM)$. Let $V_{\bullet}\cM_m$
denote the $V$-filtration with respect to the function $t:X\times \C\to \C$. 

\begin{prop}[$V$-filtrations for $f^{m}$]\label{prop: V filtration for powers of functions}
	For any real number $\alpha < 0$, there is a natural isomorphism of filtered
	$\sO_X$-modules
	\[ 
		\phi_{\alpha} \colon \bigl( V_{m \alpha} \cM_1, F_{\bullet} V_{m \alpha} \cM_1 \bigr)
		\to \bigl( V_{\alpha}\cM_m, F_{\bullet} V_{\alpha} \cM_m \bigr)
	\]
	such that $\phi_{\alpha}(v \cdot t^m) = \phi_{\alpha}(v) \cdot t$ and
	$\phi_{\alpha}(v \cdot \frac{1}{m} t \partial_t) = \phi_{\alpha}(v) \cdot t \partial_t$ for
	all local sections $v \in V_{m\alpha} \cM_1$. If multiplication by $f$ is
	injective on $\Mmod$, then the same is true for $\alpha = 0$.
\end{prop}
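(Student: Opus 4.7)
The plan is to exploit the factorization $i_m = \pi_m \circ i_1$, where $\pi_m\colon X \times \C \to X \times \C$ is the ramified cover $(x, u) \mapsto (x, u^m) = (x, t)$, so $\cM_1$ sits naturally on the source (with coordinate $u$) and $\cM_m$ on the target (with coordinate $t$). This gives the identity $\cM_m = (\pi_m)_+ \cM_1$ of filtered $\sD$-modules, and the change of variables $t = u^m$ yields the operator relation $t\partial_t = \tfrac{1}{m} u\partial_u$, which is the algebraic source of the rescaling $\alpha \leftrightarrow m\alpha$ between the two $V$-filtrations.

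I would construct $\phi$ from the explicit description of the direct image: since $\pi_m$ is finite of degree $m$, the transfer bimodule produces a canonical $\sO_X$-linear, $F$-filtered morphism $\phi\colon \cM_1 \to \cM_m$, picking out the ``constant-term'' summand in the local decomposition $(\pi_m)_* \sO_{X \times \C} \cong \bigoplus_{i=0}^{m-1} \sO_{X \times \C}\cdot u^{i}$. Using the graph-embedding presentations $\cM_1 = \cM \otimes_\C \C[\partial_u]$ and $\cM_m = \cM \otimes_\C \C[\partial_t]$, a direct computation then verifies the two intertwining relations $\phi(v \cdot u^m) = \phi(v)\cdot t$ and $\phi(v \cdot \tfrac{1}{m}u\partial_u) = \phi(v)\cdot t\partial_t$ asserted in the proposition.

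To prove the filtered isomorphism, I would invoke uniqueness of the Kashiwara--Malgrange $V$-filtration. Set $U_\alpha \cM_m := \phi(V_{m\alpha}\cM_1)$ and verify the defining axioms along $t = 0$: coherence over $V_0 \sD_{X \times \C_t}$, reducing to coherence of $V_{m\alpha}\cM_1$ over $V_0 \sD_{X \times \C_u}$ using finiteness of $\pi_m$; discreteness and right-continuity, inherited from $V_\bullet \cM_1$; the shift $U_\alpha \cdot t \subseteq U_{\alpha - 1}$, from the first intertwining relation and iteration of $V_\beta \cM_1 \cdot u \subseteq V_{\beta -1}\cM_1$; the shift $U_\alpha \cdot \partial_t \subseteq U_{\alpha + 1}$, via $V_0\sD$-generation together with the operator identity; and nilpotence of $t\partial_t - \alpha$ on $\gr^U_\alpha$, transferred from nilpotence of $u\partial_u - m\alpha$ on $\gr^V_{m\alpha}\cM_1$. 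Uniqueness then forces $U_\bullet = V_\bullet$, and strict compatibility of $\phi$ with $F_\bullet$ --- inherited from Saito's bifiltered direct image formalism --- upgrades the identification to a filtered isomorphism.

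The main obstacle is matching axiom (3) of the definition exactly, i.e., the equality $U_\alpha \cdot t = U_{\alpha -1}$. The inclusion is immediate, but the reverse requires iterating the equality $V_\beta \cM_1 \cdot u = V_{\beta - 1}\cM_1$ a total of $m$ times starting at $\beta = m\alpha$; axiom (3) for $\cM_1$ supplies this equality only for $\beta < 0$. For $\alpha < 0$, this holds throughout the iteration, but for $\alpha = 0$ the final step at $\beta = 0$ fails. This is precisely where the injectivity hypothesis on $f$ enters: it ensures that $u$ acts injectively on the relevant subquotient of $\cM_1$, which upgrades the inclusion $V_0\cM_1 \cdot u \subseteq V_{-1}\cM_1$ to an equality and closes the argument at the boundary.
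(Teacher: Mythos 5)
Your starting point is the same as the paper's (the factorization $i_m=p\circ i_1$ through the fibrewise $m$-th power map, so $\cM_m\cong p_{+}\cM_1$, with the operator identity relating $t\partial_t$ and $s\partial_s$), but the reduction to uniqueness of the Kashiwara--Malgrange filtration does not go through as you set it up. The subsheaves $U_\alpha:=\phi(V_{m\alpha}\cM_1)$ are not the $V$-filtration of $\cM_m$: their union is only $\operatorname{im}\phi$, a proper $\sO$-submodule of $\cM_m$ (for $m\geq 2$ the image only generates $\cM_m$ after applying powers of $\partial_s$), so exhaustiveness fails; and in fact $\phi(V_{m\alpha}\cM_1)=V_\alpha\cM_m$ is \emph{false} for $\alpha>0$, which is exactly why the proposition is restricted to $\alpha\leq 0$ and why the paper computes the full $V$-filtration on the auxiliary module $\Mmodt_1$ on $X\times\C_t\times\C_s$ (Lemma~\ref{lemma: V filtration of M1Cs}, via the relation \eqref{eq:key-relation}). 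Your two intertwining identities are also not enough to check the $\partial_t$-axiom; one needs in addition a relation of the shape $\phi(v\cdot\partial_t)=m\,\phi(v\,t^{m-1})\cdot\partial_s$, which only comes out of the explicit direct-image computation. A symptom that the plan proves too much: if $U_\bullet$ really were an exhaustive filtration satisfying the axioms, uniqueness would give $\phi(V_0\cM_1)=V_0\cM_m$ with no hypothesis on $f$, contradicting the careful hypothesis in the statement.

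The more serious gap is the filtered part. Even granting the equality of subsheaves, you never prove that $\phi$ is injective on $V_{m\alpha}\cM_1$, nor that $\phi(F_pV_{m\alpha}\cM_1)=F_pV_\alpha\cM_m$; the appeal to ``strict compatibility inherited from Saito's bifiltered direct image formalism'' is precisely where the paper's real work lies: the computation of $F_pV_\alpha\Mmodt_1$ (Lemma~\ref{lemma: Fp intersect Valpha}) by descending induction on the $\partial_s$-degree, using that multiplication by $t^{d(m-1)}$ is a filtered isomorphism on the $V_{<0}$ part, combined with bistrictness of the direct image along $q$, which yields the exact sequence \eqref{eq:Mm-complex-FV} on the level of $F_pV_\alpha$ and hence both injectivity and filtered surjectivity of $\phi_\alpha$ at once. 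Finally, your account of where injectivity of $f$ enters is incorrect: the equality $V_0\cM_1\cdot t=V_{-1}\cM_1$ is generally false even when $f$ acts injectively on $\Mmod$ (already for smooth $f$, where $\gr_0^V\cM_1=0$ while $\gr_{-1}^V\cM_1\neq 0$), and axiom (3) only demands equality for $\alpha<0$ anyway. In the paper the hypothesis is used elsewhere: at $\alpha=0$ in the Hodge-filtration computation, where one only obtains $v_d\,t\in F_{p-d}V_{-1}\cM_1$ and needs strictness of the variation morphism together with injectivity of $t$ on $\cM_1$ (equivalently of $f$ on $\Mmod$) to conclude $v_d\in F_{p-d}V_0\cM_1$.
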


\begin{corollary} \label{cor: V filtration for powers of functions}
On the level of mixed Hodge modules, this gives 
\[
	\psi_{f^m,\lambda}M\cong \psi_{f,\lambda^m}M 
	\quad \text{and} \quad
	\phi_{f^m,1}M\cong \phi_{f,1}M,
\]
under the assumption that multiplication by $f$ is injective on $\Mmod$.
\end{corollary}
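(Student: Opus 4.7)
The plan is to exploit the factorization $i_m = \sigma_m \circ i_1$, where
\[
	\sigma_m : X \times \C \to X \times \C, \quad (x, s) \mapsto (x, s^m).
\]
Transitivity of the filtered $\sD$-module direct image yields $(\cM_m, F_\bullet \cM_m) = (\sigma_m)_+ (\cM_1, F_\bullet \cM_1)$, and since $\sigma_m$ is finite flat between smooth varieties of the same dimension, this pushforward is exact with no cohomological shift. Under this identification, the operators on $\cM_m$ correspond to operators on $\cM_1$ via $t \leftrightarrow s^m$ and $t\partial_t \leftrightarrow \tfrac{1}{m}\, s \partial_s$, which is the starting point.

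Next I would define the candidate filtration $U_\alpha \cM_m \colonequals V_{m\alpha} \cM_1$ (as an $\sO$-subsheaf of $\cM_m$ under the pushforward identification) and take $\phi_\alpha$ to be the resulting map. The goal is to verify the Kashiwara--Malgrange axioms for $U_\bullet$, so that by uniqueness we must have $U_\alpha \cM_m = V_\alpha \cM_m$. Discreteness and right-continuity of $U_\bullet$ are immediate. For $\alpha < 0$,
\[
	U_\alpha \cM_m \cdot t = V_{m\alpha} \cM_1 \cdot s^m = V_{m(\alpha - 1)} \cM_1 = U_{\alpha - 1} \cM_m,
\]
giving axiom (3); axiom (5) is immediate since $t\partial_t - \alpha$ acts as $\tfrac{1}{m}(s \partial_s - m\alpha)$, which is nilpotent on $\gr^V_{m\alpha} \cM_1$. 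For coherence over $V_0 \sD_{X \times \C_t}$, the key algebraic fact is that $V_0 \sD_{X \times \C_s}$ is free of rank $m$ over the subalgebra image of $V_0 \sD_{X \times \C_t}$, with basis $1, s, \ldots, s^{m - 1}$; hence $V_0 \sD_{X \times \C_s}$-coherence of $V_{m\alpha} \cM_1$ yields $V_0 \sD_{X \times \C_t}$-coherence of $U_\alpha \cM_m$. The Hodge filtration compatibility $\phi_\alpha(F_p V_{m\alpha} \cM_1) = F_p V_\alpha \cM_m$ then follows from the strict $F$--$V$ compatibility in mixed Hodge modules together with the fact that $(\sigma_m)_+$ is a morphism of filtered $\sD$-modules.

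The main obstacle is axiom (4), $U_\alpha \cM_m \cdot \partial_t \subseteq U_{\alpha + 1} \cM_m$, because $\partial_t$ does not lift naively from $\sD_{X \times \C_s}$: on the pushforward, $\partial_t$ is determined by the relation $\partial_s = m s^{m - 1} \partial_t$, so one must extend an action across $s = 0$ where $s^{m-1}$ vanishes. I would handle this by a local computation: away from the zero section $\sigma_m$ is \'etale, so $\partial_t = (m s^{m-1})^{-1} \partial_s$, and combining axiom (4) for $\cM_1$ (which raises $V$-order by $1$) with multiplication by $s^{-(m-1)}$ (which raises it by $m - 1$) lands in $V_{m\alpha + m} \cM_1 = U_{\alpha + 1} \cM_m$; this extends across $s = 0$ by $V_0 \sD$-coherence of $U_\bullet$.

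Finally, for the corollary: once the filtered isomorphism $\phi_\alpha$ is established, taking $\gr^V$ gives $\gr^V_{m\alpha} \cM_1 \cong \gr^V_\alpha \cM_m$ as filtered $\sD_X$-modules, and the dictionary (\S 2.1) between $\gr^V$ and nearby/vanishing cycles yields $\psi_{f^m, \lambda} M \cong \psi_{f, \lambda^m} M$ (using $e^{2\pi i m\alpha} = (e^{2\pi i \alpha})^m$) and $\phi_{f^m, 1} M \cong \phi_{f, 1} M$. The injectivity hypothesis on multiplication by $f$ at $\alpha = 0$ is needed to recover the equality $V_0 \cM_m \cdot t = V_{-1} \cM_m$ from $V_0 \cM_1 \cdot s = V_{-1} \cM_1$, since this part of axiom (3) can fail at the boundary $\alpha = 0$ without injectivity.
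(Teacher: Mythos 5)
The route you sketch is genuinely different from the paper's, but as written it has a circularity and two gaps that sit exactly where the real work is. The central problem is the phrase ``as an $\sO$-subsheaf of $\cM_m$ under the pushforward identification.'' The direct image $(\sigma_m)_+\cM_1$ under your finite map is \emph{not} the sheaf-theoretic pushforward of $\cM_1$: its underlying $\sO$-module is built from the transfer bimodule, and the only natural comparison is the map $v\mapsto$ class of $v\otimes 1$ (the paper's $\phi$, obtained from the presentation of $\cM_m$ as the cokernel of $\partial_t$ acting on $j_*\cM_1$). Whether this map is injective on $V_{m\alpha}\cM_1$ and hits exactly $F_pV_\alpha\cM_m$ is precisely the content of the proposition, so taking the identification for granted in order to define $U_\alpha\cM_m$ is circular. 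Moreover the image of $\cM_1$ in $\cM_m$ is in general a proper submodule, so your transported filtration $U_\bullet$ is not exhaustive; uniqueness of the Kashiwara--Malgrange filtration, which your argument hinges on, fails without exhaustiveness (the zero filtration satisfies axioms (1)--(5) of Definition \ref{definition: V-filtration} vacuously), so even verifying all listed axioms for $U_\bullet$ would not force $U_\alpha=V_\alpha$ in the range $\alpha\le 0$.

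Two further steps do not hold as stated. The $\partial_t$-stability axiom is argued on the \'etale locus and then ``extended across $s=0$ by coherence,'' but an inclusion of coherent subsheaves valid off a divisor need not extend across it (compare $\sO\subseteq\sO(-D)$ off $D$); controlling the action of $\partial_t$ at the zero section is exactly where the paper's explicit identity \eqref{eq:key-relation} is used. Finally, the corollary is a statement about mixed Hodge modules, so you need the \emph{filtered} isomorphism $F_pV_{m\alpha}\cM_1\cong F_pV_\alpha\cM_m$; a one-line appeal to ``strict $F$--$V$ compatibility'' does not produce it. In the paper this is where Saito's bistrictness of the filtered direct image (applied to the complex \eqref{eq:Mm-complex}) is combined with the explicit computations of $V_\alpha$ and $F_pV_\alpha$ on the intermediate module $\Mmodt_1$ (Lemmas \ref{lemma: V filtration of M1Cs} and \ref{lemma: Fp intersect Valpha}); and it is inside that computation, via strictness of the variation map $t\colon\gr^V_0\to\gr^V_{-1}$ at $\alpha=0$, that the hypothesis that multiplication by $f$ is injective actually enters --- not, as you suggest, in recovering the equality $V_0\cdot t=V_{-1}$, which is not among the axioms. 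So the proposal needs to be reorganized around a concrete model of the direct image (the paper's factorization through $j(x,t)=(x,t,t^m)$ and the projection $q$ is one such) before the uniqueness-of-$V$ strategy can be made to run.
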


The proof takes up the remainder of this section. We observe that the graph
embeddings fit into a commutative diagram
\[ \begin{tikzcd}   
X \arrow[r,"i_1"] \arrow[dr,bend right, "i_m"] &  X\times \C_t \arrow[r,"j"] \arrow[d,"p"]& X\times \C_t\times \C_s \arrow[dl,bend left,"q"]\\
 & X\times \C_s & 
\end{tikzcd}\]
in which $j(x,t)=(x,t,t^m), p(x,t)=(x,t^m)$, and $q(x,t,s)=(x,s)$. To distinguish the
two copies of $X\times \C$, let us denote the two coordinate functions by $t$ and
$s$, as indicated in the diagram above; then $s \circ p = t^m$. Since the diagram is
commutative, we have
\begin{equation}\label{eqn: computation of Mm via M1}
	M_m \cong p_{\ast} M_1 \cong q_{\ast} \bigl( j_{\ast} M_1 \bigr).
\end{equation}
Our strategy is to compute these direct images, and then use the bistrictness of
direct images with respect to the Hodge and $V$-filtration. 

We begin by describing the $\sD$-modules involved in the computation. Since $i_1$ is
a closed embedding, the $\sD$-module underlying the direct image $M_1 = (i_1)_{\ast}
M$ is 
\[
	\Mmod_1 = \Mmod \tensor_{\C} \C[\partial_t],
\]
with the right $\sD_{X \times \C_t}$-module structure determined by the following
formulas:
\begin{align*}
	(u \tensor \partial_t^k) \cdot t &= uf \tensor \partial_t^k + k u \tensor
	\partial_t^{k-1} \\
	(u \tensor \partial_t^k) \cdot \partial_t &= u \tensor \partial_t^{k+1} \\
	(u \tensor \partial_t^k) \cdot x_j &= u x_j \tensor \partial_t^k \\
	(u \tensor \partial_t^k) \cdot \partial_j &= u \partial_j \tensor \partial_t^k 
	- u \frac{\partial f}{\partial x_j} \tensor \partial_t^{k+1}
\end{align*}
Here $x_1, \dotsc, x_n$ are local coordinates on $X$, and $\partial_j =
\partial/\partial x_j$ are the corresponding vector fields. The Hodge filtration
$F_{\bullet} \Mmod_1$ is given by
\[
	F_p \Mmod_1 = \sum_{k \in \ZZ} F_{p-k} \Mmod \tensor \partial_t^k.
\]
Similarly, the $\sD$-module underlying the direct image $\Mt_1 = j_* M_1$ is
\[
	\Mmodt_1 = \Mmod_1 \tensor_{\C} \C[\partial_s], 
\]
with the right $\sD_{X \times \C_t \times \C_s}$-module structure determined by the
formulas
\begin{align*}
	(v \tensor \partial_s^{\ell}) \cdot s &= v t^m \tensor \partial_s^{\ell} 
	+ \ell v \tensor \partial_s^{\ell-1} \\
	(v \tensor \partial_s^{\ell}) \cdot \partial_s &= v \tensor \partial_s^{\ell+1} \\
	(v \tensor \partial_s^{\ell}) \cdot P &= v P \tensor \partial_s^{\ell} 
	\quad \text{for any $P \in \sD_X$} \\
	(v \tensor \partial_s^{\ell}) \cdot \partial_t &= v \partial_t \tensor
	\partial_s^{\ell} - m v t^{m-1} \tensor \partial_s^{\ell+1}
\end{align*}
and the Hodge filtration
\[
	F_p \Mmodt_1 = \sum_{\ell \in \ZZ} F_{p-\ell} \Mmod_1 \tensor \partial_s^{\ell}.
\]
Lastly, the filtered $\sD$-module underlying $M_m \cong q_* \Mt_1$ can be computed by the
relative de Rham complex, and this gives us a short exact sequence
\begin{equation} \label{eq:Mm-complex}
	\begin{tikzcd}
		0 \rar & (\Mmodt_1, F_{\bullet-1} \Mmodt_1) \rar{\partial_t} & 
		(\Mmodt_1, F_{\bullet} \Mmodt_1) \rar{\pi} & (\Mmod_m, F_{\bullet} \Mmod_m) \rar& 0.
	\end{tikzcd}
\end{equation}

The next step is to compute the $V$-filtrations. Since $M$ is a mixed Hodge module,
the $V$-filtration $V_{\bullet} \Mmod_1$ with respect to the function $t$ exists, and
with the convenient shorthand $F_p V_{\alpha} \Mmod_1 = F_p \Mmod_1 \cap V_{\alpha}
\Mmod_1$, the two morphisms
\[
	t \colon F_p V_{\alpha} \Mmod_1 \to F_p V_{\alpha-1} \quad
	\text{and} \quad
	\partial_t \colon F_p \gr_{\alpha}^V \Mmod_1 \to F_p \gr_{\alpha+1}^V \Mmod_1
\]
are isomorphisms for $\alpha < 0$ respectively $\alpha > 0$ \cite[(3.2.1)]{Saito88}. The $V$-filtration
$V_{\bullet} \Mmod_m$ with respect to the function $s$ also exists and has the same
properties. The following lemma describes the $V$-filtration of $\Mmodt_1$ with
respect to the function $s \colon X \times \C_t \times \C_s \to \C_s$, which is inspired by \cite[Theorem 3.4]{Saito90}.

\begin{lemma}\label{lemma: V filtration of M1Cs}
	The $V$-filtration on $\Mmodt_1 = \Mmod_1 \tensor_{\C} \C[\partial_s]$ is given by
	the formula
	\[
		V_{\alpha} \Mmodt_1 = \sum_{k,\ell \in \N} \Bigl( V_{m(\alpha-\ell)} \Mmod_1
		\tensor \partial_s^{\ell} \Bigr) \cdot \partial_t^k.
	\]
\end{lemma}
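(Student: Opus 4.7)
The plan is to show that the right-hand side, which we denote by $U_\alpha \Mmodt_1$, satisfies the defining axioms of the Kashiwara--Malgrange $V$-filtration of $\Mmodt_1$ along $s = 0$ listed in Definition~\ref{definition: V-filtration}, and then invoke Kashiwara's uniqueness theorem (Remark~\ref{lemma:comparison of V-filtrations}) to conclude $U_\alpha = V_\alpha \Mmodt_1$. Discreteness and right-continuity of $U_\bullet$ in $\alpha$ are immediate from the corresponding properties of $V_\bullet \cM_1$.

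For the $s$- and $\partial_s$-axioms, the key computations rest on the right module formulas
\[
(v \otimes \partial_s^\ell) \cdot s = v t^m \otimes \partial_s^\ell + \ell v \otimes \partial_s^{\ell - 1}, \qquad (v \otimes \partial_s^\ell) \cdot \partial_s = v \otimes \partial_s^{\ell + 1},
\]
combined with the elementary inclusions $V_{m(\alpha - \ell)} \cM_1 \cdot t^m \subseteq V_{m((\alpha - 1) - \ell)} \cM_1$ and $V_{m(\alpha - \ell)} \cM_1 \subseteq V_{m((\alpha + 1) - (\ell + 1))} \cM_1$; these immediately give $U_\alpha \cdot s \subseteq U_{\alpha - 1}$ and $U_\alpha \cdot \partial_s \subseteq U_{\alpha + 1}$. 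For the equality $U_\alpha \cdot s = U_{\alpha - 1}$ when $\alpha < 0$, I would argue by induction on the level $\ell$: iterating axiom~(3) of Definition~\ref{definition: V-filtration} gives $V_\beta \cM_1 \cdot t^m = V_{\beta - m} \cM_1$ for $\beta < 0$, so any generator $w \otimes \partial_s^\ell \in U_{\alpha - 1}$ can be written as $v t^m \otimes \partial_s^\ell$ for some $v \in V_{m(\alpha - \ell)} \cM_1$, and the identity $w \otimes \partial_s^\ell = (v \otimes \partial_s^\ell) \cdot s - \ell v \otimes \partial_s^{\ell - 1}$ reduces the problem to level $\ell - 1$. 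Combining this with the identity $v t^m \otimes \partial_s^{\ell + 1} = (v \otimes \partial_s^\ell) \cdot s \partial_s - \ell v \otimes \partial_s^\ell$ yields $U_\alpha = V_0 \sD_{X \times \C_t \times \C_s} \cdot (V_{m\alpha} \cM_1 \otimes 1)$, and coherence of $U_\alpha$ over $V_0 \sD_{X \times \C_t \times \C_s}$ then follows from coherence of $V_{m\alpha} \cM_1$ over $V_0 \sD_{X \times \C_t}$.

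The main obstacle is the nilpotency of $s\partial_s - \alpha$ on $\gr^U_\alpha \Mmodt_1$. A direct computation gives
\[
(v \otimes \partial_s^\ell) \cdot (s\partial_s - \alpha) = v t^m \otimes \partial_s^{\ell + 1} + (\ell - \alpha) v \otimes \partial_s^\ell.
\]
The strategy is first to make $\gr^U_\alpha \Mmodt_1$ explicit by analyzing the cross-term $-m v t^{m-1} \otimes \partial_s^{\ell + 1}$ appearing in $(v \otimes \partial_s^\ell) \cdot \partial_t = v \partial_t \otimes \partial_s^\ell - m v t^{m-1} \otimes \partial_s^{\ell + 1}$, which together with $V_\beta \cdot t^m = V_{\beta - m}$ for $\beta < 0$ produces nontrivial relations across the naive ``level'' pieces in $\gr^U_\alpha$. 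After careful bookkeeping, one expects to identify $\gr^U_\alpha \Mmodt_1$ as a quotient of $\gr^V_{m\alpha} \cM_1$ on which the action of $s\partial_s - \alpha$ corresponds, up to a nilpotent endomorphism, to $\tfrac{1}{m}(t\partial_t - m\alpha)$ --- reflecting the formal identity $m s\partial_s \equiv t\partial_t$ on the graph $\{s = t^m\}$. Axiom~(5) for $V_\bullet \cM_1$ gives the nilpotency of $t\partial_t - m\alpha$ on $\gr^V_{m\alpha} \cM_1$, which then transports to the required nilpotency. With all the $V$-filtration axioms verified, Kashiwara's uniqueness identifies $U_\alpha = V_\alpha \Mmodt_1$, completing the proof.
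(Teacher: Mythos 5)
Your overall strategy (verify the axioms of Definition \ref{definition: V-filtration} for the candidate filtration $U_\bullet$ and conclude by uniqueness) is the same as the paper's, and your verification of the easy axioms, including the induction on $\ell$ giving $U_\alpha \cdot s = U_{\alpha-1}$ for $\alpha<0$, is fine. The real content, however, lies in the coherence and nilpotency steps, and both have genuine gaps. The decisive one is nilpotency of $s\partial_s-\alpha$ on $\gr^U_\alpha\Mmodt_1$. Your direct formula $(v\tensor\partial_s^\ell)\cdot(s\partial_s-\alpha)=vt^m\tensor\partial_s^{\ell+1}+(\ell-\alpha)\,v\tensor\partial_s^\ell$ is correct, but iterating it alone can never land in $U_{<\alpha}$: the original generator keeps the nonzero coefficient $(\ell-\alpha)^N$, so the $\partial_t$-cross-terms are not optional bookkeeping but the heart of the matter. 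Moreover your proposed identification of $\gr^U_\alpha\Mmodt_1$ with a quotient of $\gr^V_{m\alpha}\Mmod_1$ is false: $\gr^U_\alpha\Mmodt_1$ contains the classes of all $(v\tensor\partial_s^\ell)\cdot\partial_t^k$ and is far larger as an $\sO$-module; an isomorphism of the kind you have in mind holds only after passing to $\Mmod_m$ (i.e.\ killing the image of $\partial_t$), which is the content of Proposition \ref{prop: V filtration for powers of functions}, not of this lemma. What the paper uses instead is the identity
\[
(v\tensor\partial_s^\ell)\cdot\bigl(m(s\partial_s-\alpha)+t\partial_t\bigr)=v\bigl(t\partial_t-m(\alpha-\ell)\bigr)\tensor\partial_s^\ell,
\]
which, iterated, reduces the claim to the nilpotency of $t\partial_t-m(\alpha-\ell)$ on $\gr^V_{m(\alpha-\ell)}\Mmod_1$ for \emph{every} level $\ell$ (not just $\ell=0$, and with the shift by $k$ on $\partial_t^k$-translates). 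Your proposal never produces such an identity, only the expectation that one exists, so the key axiom is not verified.

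The coherence step also has a gap. Your claim $U_\alpha=V_0\sD_{X\times\C_t\times\C_s}\cdot\bigl(V_{m\alpha}\Mmod_1\tensor 1\bigr)$ rests on $V_\beta\Mmod_1\cdot t^m=V_{\beta-m}\Mmod_1$, which the axioms give only when all intermediate indices are negative; so the rewriting $V_{m(\alpha-\ell)}=V_{m\alpha}\cdot t^{m\ell}$ is available only for $\alpha<0$. Since the lemma is stated for all $\alpha$, this leaves the range $\alpha\ge 0$ unproved: there the level-zero piece does not obviously generate (at $\alpha\in\N$ one runs into $V_0\cdot t\subsetneq V_{-1}$, i.e.\ non-surjectivity of the variation map), which is exactly why the paper's argument retains all levels $0\le\ell\le\max(0,\lfloor\alpha\rfloor)$ in its bounded expression and needs a separate argument at integer $\alpha$ using the identity $vt^{m-1}\tensor\partial_s^{\ell+1}=\tfrac1m\bigl(v\partial_t\tensor\partial_s^\ell-(v\tensor\partial_s^\ell)\cdot\partial_t\bigr)$. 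So for $\alpha<0$ your coherence argument can be made to work, but as written it does not establish the lemma in the generality claimed, and the nilpotency step fails for all $\alpha$.
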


\begin{proof}
	It is easy to see that $V_{\alpha} \Mmodt_1$ is preserved by the action of $\sD_{X
	\times \C_t}$, and that one has $V_{\alpha} \Mmodt_1 \cdot \partial_s \subseteq
	V_{\alpha+1} \Mmodt_1$ and $V_{\alpha} \Mmodt_1 \cdot s \subseteq V_{\alpha-1}
	\Mmodt_1$ for every $\alpha \in \R$, with the second inclusion being an equality
	for $\alpha < 0$. It remains to show that each $V_{\alpha} \Mmodt_1$ is coherent
	over $V_0 \sD_{X \times \C_t \times \C_s}$, and that the operator $s \partial_s -
	\alpha$ acts nilpotently on the quotient $\gr_{\alpha}^V \Mmodt_1$. 
	Both of these facts rely on the following important identity, which is readily
	proved using the formulas for the $\sD$-module structure on $\Mmodt_1$:
	\begin{equation} \label{eq:key-relation}
		(v \tensor \partial_s^{\ell}) \cdot (s \partial_s - \alpha)
		= \frac{1}{m} \Bigl( v \bigl( t \partial_t - m(\alpha - \ell) \bigr) \tensor
		\partial_s^{\ell} - (v \tensor \partial_s^{\ell}) \cdot t \partial_t \Bigr)
	\end{equation}

	Let us first prove the coherence. Set $a = \max(0, \lfloor \alpha \rfloor)$. If
	$\alpha \not\in \N$, then $\alpha - a < 0$, and so for every integer $\ell
	\geq a + 1$, we have
	\[
		V_{m(\alpha - \ell)} \Mmod_1 \otimes \partial_s^{\ell}
		= V_{m(\alpha - a)} \Mmod_1 \cdot t^{m(\ell-a)} \otimes \partial_s^{\ell}
		= \Bigl( V_{m(\alpha -a)} \Mmod_1 \otimes 1 \Bigr) \cdot s^{\ell-a}
		\partial_s^{\ell}
	\]
	by the properties of the $V$-filtration on $\Mmod_1$. Together with the relation
	in \eqref{eq:key-relation}, this allows us to eliminate all the terms with $\ell
	\geq a+1$ from the formula for $V_{\alpha} \Mmodt_1$. 
	If $\alpha \in \N$, then $\alpha = a$, and for every integer $\ell \geq a+1$, we
	still have
	\[
		V_{m(\alpha - \ell)} \Mmod_1 \tensor \partial_s^{\ell}
		= V_{-1} \Mmod_1 \cdot t^{m-1 + m(\ell-a-1)} \tensor \partial_s^{\ell}
		= \Bigl( V_{-1} \Mmod_1 \cdot t^{m-1} \otimes 1 \Bigr) s^{\ell-a-1}
		\partial_s^{\ell}.
	\]
	With the help of the identity
	\[
		v t^{m-1} \tensor \partial_s^{\ell+1} 
		= \frac{1}{m} \Bigl( v \partial_t \tensor \partial_s^{\ell} 
		- (v \tensor \partial_s^{\ell}) \cdot \partial_t \Bigr)
	\]
	we can then again eliminate all terms with $\ell \geq a+1$ from the formula for
	$V_{\alpha} \Mmodt_1$. In both cases, the conclusion is that
	\begin{equation} \label{eq:V-filtration-bounded}
		V_{\alpha} \Mmodt_1 = \sum_{k \in \N} \sum_{\ell=0}^{\max(0, \lfloor \alpha
			\rfloor)} \Bigl( V_{m(\alpha-\ell)} \Mmod_1
		\tensor \partial_s^{\ell} \Bigr) \cdot \partial_t^k.
	\end{equation}
	Since each $V_{m(\alpha-\ell)} \Mmod_1$ is finitely generated over $V_0 \sD_{X
	\times \C_t}$, this shows that $V_{\alpha} \Mmodt_1$ is finitely generated over
	$V_0 \sD_{X \times \C_t \times \C_s}$. 

	Finally, we argue that $s \partial_s - \alpha$ acts nilpotently on
	$\gr_{\alpha}^V \Mmodt_1$. For this, it suffices to show that if $v \in
	V_{m(\alpha - \ell)} \Mmod_1$ is any local section, then 
	\[
		(v \tensor \partial_s^{\ell}) \cdot (s \partial_s - \alpha)^N
		\in V_{< \alpha} \Mmodt_1
	\]
	for $N \gg 0$. But since $t \partial_t - m(\alpha - \ell)$ acts nilpotently on
	$\gr_{m(\alpha-\ell)}^V \Mmod_1$, this is an easy consequence of the identity in
	\eqref{eq:key-relation}.
\end{proof}

The next task is to compute the intersection
\[
	F_p V_{\alpha} \Mmodt_1 = F_p \Mmodt_1 \cap V_{\alpha} \Mmodt_1.
\]
This is the content of the following lemma.

\begin{lemma}\label{lemma: Fp intersect Valpha}
	For every $\alpha < 0$ and every $p \in \Z$, one has
	\[
		F_p V_{\alpha} \Mmodt_1 = \sum_{k \in \N} \Bigl( F_{p-k} V_{m \alpha} \Mmod_1
		\tensor 1 \Bigr) \cdot \partial_t^k.
	\]
	This also holds for $\alpha = 0$, provided that multiplication by $f$ is injective
	on $\Mmod$. 
\end{lemma}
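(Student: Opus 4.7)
My plan is to prove both inclusions by leveraging the presentation $V_{\alpha} \Mmodt_1 = \sum_k (V_{m\alpha} \Mmod_1 \tensor 1) \partial_t^k$ from Lemma \ref{lemma: V filtration of M1Cs} (valid for $\alpha \leq 0$). The inclusion $\supseteq$ is straightforward: for $v \in F_{p-k} V_{m\alpha} \Mmod_1$, the element $v \tensor 1$ sits in the $\partial_s^0$-summand of $F_{p-k} \Mmodt_1$ and in the $\ell = 0$ term of $V_{\alpha} \Mmodt_1$; since $\partial_t$ belongs to $V_0 \sD_{X \times \C_t \times \C_s}$ and raises the Hodge filtration by one, multiplying by $\partial_t^k$ lands in $F_p V_{\alpha} \Mmodt_1$.

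For the reverse inclusion, I take $u \in F_p V_{\alpha} \Mmodt_1$ and write $u = \sum_\ell u_\ell \tensor \partial_s^\ell$ in $\Mmodt_1 = \Mmod_1 \tensor_\C \C[\partial_s]$, so that $u_\ell \in F_{p-\ell} \Mmod_1$. Let $L$ be the largest index with $u_L \neq 0$. I proceed by descending induction on $L$. The base case $L = 0$ is immediate: $u = u_0 \tensor 1$ sits in the $\partial_s^0$-component of $V_\alpha \Mmodt_1$, which equals $V_{m\alpha} \Mmod_1 \tensor 1$ by Lemma \ref{lemma: V filtration of M1Cs}, so $u_0 \in F_p V_{m\alpha} \Mmod_1$.

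For the inductive step, apply Lemma \ref{lemma: V filtration of M1Cs} to express $u = \sum_{k=0}^K (v_k \tensor 1) \partial_t^k$ with $v_k \in V_{m\alpha} \Mmod_1$. An easy induction on $k$ using the right $\sD$-module formulas for $\Mmodt_1$ shows that the coefficient of $\partial_s^\ell$ in $(v_k \tensor 1) \partial_t^k$ vanishes for $\ell > k$, while the leading coefficient at $\ell = k$ equals $(-m)^k v_k t^{k(m-1)}$. Thus $u_K = (-m)^K v_K t^{K(m-1)}$; if $K > L$ this must vanish, and the bistrictness axiom in Definition \ref{definition: relation between Hodge and V filtration}(1) makes $t$ bijective on $V_{m\alpha} \Mmod_1$ for $m\alpha < 0$, forcing $v_K = 0$ and allowing us to iteratively arrange $K = L$. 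With $K = L$, the identity $u_L = (-m)^L v_L t^{L(m-1)} \in F_{p-L} \Mmod_1$ together with $v_L \in V_{m\alpha}$ places $v_L t^{L(m-1)}$ in $F_{p-L} V_{m\alpha - L(m-1)} \Mmod_1$. Iterating the bistrictness isomorphism $t \colon F_{p-L} V_{\beta} \xrightarrow{\sim} F_{p-L} V_{\beta - 1}$ along $\beta = m\alpha, m\alpha - 1, \dotsc, m\alpha - L(m-1) + 1$---all strictly negative when $\alpha < 0$---lifts $v_L$ uniquely to an element of $F_{p-L} V_{m\alpha} \Mmod_1$. Then $u' = u - (v_L \tensor 1)\partial_t^L$ lies in $F_p V_{\alpha} \Mmodt_1$ by the $\supseteq$-direction and has $\partial_s$-degree strictly less than $L$, closing the induction.

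The case $\alpha = 0$ is handled in the same way. The only places where $\alpha < 0$ was used are the injectivity of $t$ on $V_{m\alpha} \Mmod_1$ and the first step of the iterated bistrictness, both of which occur at $V$-index $0$ when $\alpha = 0$. The hypothesis that multiplication by $f$ is injective on $\Mmod$ forces $t$ to act injectively on $\Mmod_1 = \Mmod \tensor_\C \C[\partial_t]$ by an elementary calculation on the Laurent presentation, which together with the strict specialisability of the MHM $M_1$ at the origin supplies the missing ingredients. The main obstacle is choosing the right induction variable: by first passing to the minimal presentation with $K = L$, the leading coefficient $(-m)^L v_L t^{L(m-1)}$ is forced to coincide with $u_L$, and a single one-step lift through the chain of nonpositive $V$-indices then produces the required $v_L \in F_{p-L} V_{m\alpha} \Mmod_1$.
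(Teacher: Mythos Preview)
Your proof is correct and follows essentially the same route as the paper's: both arguments write an element of $V_{\alpha}\Mmodt_1$ in the form $\sum_k (v_k\tensor 1)\partial_t^k$ with $v_k\in V_{m\alpha}\Mmod_1$, identify the top $\partial_s$-coefficient as $(-m)^d v_d t^{d(m-1)}$, use the filtered isomorphism $t\colon F_q V_{\beta}\xrightarrow{\sim} F_q V_{\beta-1}$ for $\beta<0$ to conclude $v_d\in F_{p-d}V_{m\alpha}$, then subtract and descend. The paper simply inducts on the length $d$ of the $V$-presentation directly, whereas you first reduce the $V$-presentation length $K$ down to the $\partial_s$-degree $L$ and induct on $L$; this is a harmless extra step. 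One terminological remark on your $\alpha=0$ case: the missing ingredient at the step $v_L t\in F_{p-L}V_{-1}\Rightarrow v_L\in F_{p-L}V_0$ is not ``strict specialisability'' in the sense of Definition~\ref{definition: relation between Hodge and V filtration}, but rather the strictness of the variation morphism $t\colon \gr^V_0\Mmod_1\to\gr^V_{-1}\Mmod_1$ as a morphism of mixed Hodge modules, combined with the injectivity of $t$ you already established.
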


\begin{proof}
	Since the right-hand side is obviously contained in the left-hand side, it
	suffices to prove the reverse inclusion. According to
	\eqref{eq:V-filtration-bounded}, any local section $w \in V_{\alpha} \Mmodt_1$ can be
	written, for some $d \in \N$, in the form
	\[
		w = \sum_{k=0}^d \bigl( v_k \tensor 1) \cdot \partial_t^k,
	\]
	with local sections $v_0, \dotsc, v_d \in V_{m \alpha} \Mmod_1$. Using the
	formulas for the $\sD$-module structure on $\Mmodt_1$, this expression for $w$ can
	of course be rewritten as
	\[
		\sum_{\ell=0}^d u_{\ell} \tensor \partial_s^{\ell},
	\]
	which is a local section of $F_p \Mmodt_1$ exactly when $u_{\ell} \in F_{p-\ell}
	\Mmod_1$ for every $0 \leq \ell \leq d$. We now analyze these expressions from the
	top down. A short computation shows that
	\[
		u_d = (-1)^d m^d \cdot v_d t^{d(m-1)} \in F_{p-d} V_{m \alpha - d(m-1)} \Mmod_1.
	\]
	Now there are two cases. One case is $\alpha < 0$. Here multiplication
	by $t^{d(m-1)}$ is a filtered isomorphism, and therefore $v_d \in F_{p-d} V_{m
	\alpha}$. This means that $v_d \tensor \partial_s^d$ is contained in the
	right-hand side, and so we can subtract it from the expression for $w$, and
	finish the proof by induction on $d \geq 0$. The other case is $\alpha =
	0$. Here we can only conclude that $v_d t \in F_{p-d} V_{-1} \Mmod_1$. But since
	$M$ is a mixed Hodge module, the variation morphism $t
	\colon \gr_0^V \Mmod_1 \to \gr_{-1}^V \Mmod_1$ is strict with respect to the Hodge
	filtration. It follows that there is some $v_d' \in F_{p-d} V_0 \Mmod_1$ such that
	$v_d' t = v_d t$. Because multiplication by $f$ is injective on $\Mmod$,
	multiplication by $t$ is injective on $\Mmod_1$, and therefore $v_d \in
	F_{p-d} V_0 \Mmod_1$. We can then finish the proof exactly as in the case $\alpha
	< 0$.
\end{proof}

Now let us go back to the short exact sequence in \eqref{eq:Mm-complex}. Since $M$ is
a mixed Hodge module, the relative de Rham complex computing the direct image $M_m
\cong q_* \Mt_1$ is bistrict with respect to the Hodge filtration and the
$V$-filtration \cite[(3.3.3)-(3.3.5)]{Saito88}. For $\alpha \leq 0$ and $p \in \ZZ$,
we therefore get an induced short exact sequence
\begin{equation} \label{eq:Mm-complex-FV}
	\begin{tikzcd}
		0 \rar & F_{p-1} V_{\alpha} \Mmodt_1 \rar{\partial_t} & 
		F_p V_{\alpha} \Mmodt_1 \rar{\pi} & F_p V_{\alpha} \Mmod_m \rar& 0.
	\end{tikzcd}
\end{equation}
For the remainder of the argument, we shall assume that either $\alpha < 0$, or 
$\alpha = 0$ and multiplication by $f$ is injective on $\Mmod$. Under this
assumption, we have
\[
	F_p V_{\alpha} \Mmodt_1 = \sum_{k \in \N} \Bigl( F_{p-k} V_{m \alpha} \Mmod_1
	\tensor 1 \Bigr) \cdot \partial_t^k
	= F_p V_{m\alpha} \Mmod_1 \tensor 1 + F_{p-1} V_{\alpha} \Mmodt_1 \cdot
	\partial_t.
\]
We can now conclude from the exactness of \eqref{eq:Mm-complex-FV} that the morphism
\[
	\phi_{\alpha} \colon F_p V_{m\alpha} \Mmod_1 \to F_p V_{\alpha} \Mmod_m,
	\quad \phi_{\alpha}(v) = \pi(v \tensor 1),
\]
is an isomorphism of $\sO_X$-modules. From the formulas for the $\sD$-module
structure on $\Mmodt_1$, we obtain the identities
\[
	\phi_{\alpha}(v) \cdot s = \phi_{\alpha}(v \cdot t^m) \quad \text{and} \quad
	\phi_{\alpha}(v) \cdot s \partial_s = \phi_{\alpha} \left( v \cdot \frac{1}{m} t
	\partial_t \right).
\]
This proves Proposition~\ref{prop: V filtration for powers of functions}, up to the
change in notation caused by using $s$ for the coordinate function on the graph
embedding $i_m \colon X \to X \times \C_s$ by the function $f^m$. The asserted
identities for the nearby and vanishing cycles of the mixed Hodge module $M$ follow
by passing to the graded quotients $\gr_{\alpha}^V$ for $\alpha \in [-1,0]$.

\section{Restriction of nearby cycles}
We give a computation of how the nearby cycles with respect to a
normal crossing divisor behave when restricted to one of the components of the
divisor. This is used in Theorem \ref{thm: nearby cycle of hyperelliptic curve} for the computation of nearby cycle of hyperelliptic theta divisors. 

Let $X$ be a complex manifold of dimension $n+1$, and let $f \colon X \to \CC$ be a holomorphic function whose divisor
has simple normal crossings. Let us write
\[
	\dv(f) = m_0 D_0 + m_1 D_1 + \dotsb + m_r D_r,
\]
where $D_0, D_1, \dotsc, D_r$ are smooth and their union has simple normal crossings.
After restricting to $D_0$, this gives us a linear equivalence
\[
	-m_0 D_0 \vert_{D_0} \equiv m_1 D_1 \vert_{D_0} + \dotsb + m_r D_r \vert_{D_0},
\]
and therefore a section of the $m_0$-th power of the line bundle $\shO_{D_0}(-D_0)$
whose divisor again has simple normal crossings. As usual, this data determines a cyclic
covering of $U_0 \colonequals D_0 \setminus (D_1 \cup \dots \cup D_r)$ of degree $m_0$. By pushing
forward the constant sheaf from this cyclic covering, we get $m_0$ different local
systems $L_0, \dotsc, L_{m_0-1}$ of rank one on $U_0$; the local monodromy of $L_p$
around the divisor $D_0 \cap D_j$ is multiplication by the complex number $e^{2 \pi i
\, m_j p/m_0}$. Let $j_0 \colon U_0 \into D_0$ be the inclusion of this open set into
$D_0$, and let $i_0 \colon D_0 \into X$ be the inclusion of $D_0$ into $X$.

For any root of unity $\lambda \in \CC$, consider the perverse sheaf of nearby cycles $\psi_{f,
\lambda} \CC_X[n+1]$. 

\begin{lemma}\label{lemma: restriction to a divisor}
If $\lambda^{m_0} \neq 1$, then $i^{\ast}_0 \psi_{f, \lambda} \CC_X[n+1]$ is trivial. If
	$\lambda^{m_0} = 1$, let $0 \leq p \leq m_0-1$ be the unique integer such that $\lambda
	= e^{2 \pi i p/m_0}$. Then we have an isomorphism
	\begin{equation*}
		\iu_0 \psi_{f, \lambda} \CC_X[n+1] \cong \derR (j_0)_{\ast} L_p[n].
	\end{equation*}
	With this notation, the monodromy of $L_p$ around the divisor $D_0 \cap D_j$ is
	$\lambda^{m_j}$.
\end{lemma}

\begin{proof}
The isomorphism comes about as follows. If we restrict the perverse sheaf on the
left-hand side to the open subset $U_0$, we get
\[
	\ju_0 \iu_0 \psi_{f, \lambda} \CC_X[n+1] \cong L_p[n],
\]
because on the complement $X \setminus (D_1 \cup \dotsb \cup D_r)$ of the other
components, the divisor of $f$ is just $m_0 D_0$. Since $\derR (j_0)_{\ast}$ is the
right-adjoint of $\ju_0$, this gives us a morphism
\begin{equation} \label{eq:morphism}
	\iu_0 \psi_{f, \lambda} \CC_X[n+1] \to \derR (j_0)_{\ast} L_p[n],
\end{equation}
and the theorem is claiming that this morphism is an isomorphism.

Now all we need to do is check in local coordinates that \eqref{eq:morphism} is an
isomorphism. Since nothing interesting happens away from $D_0$, we can restrict our attention to
points on $D_0$. We can then choose local coordinates $x_0, x_1, \dotsc, x_n$, with
$D_0$ defined by $x_0 = 0$, such that $f$ is equal to a unit times $x_0^{m_0} x_1^{m_1}
\dotsb x_k^{m_k}$, where $m_0, \dotsc, m_k \geq 1$; if we define $m_{k+1} = \dotsb =
m_n = 0$, we can abbreviate this as $x^m$. 

Let $\Mmod$ be the right $\sD$-module underlying the nearby cycles $\psi_f \QQ_X[n+1]$. Using \cite[\S 4]{Chenqianyu2021}, in local coordinates as
above, we have
\[
	\Mmod \cong \Dmod_{\CC^{n+1}} / I,
\]
where $I$ is the right ideal generated by the monomial $x_0^{m_0} \dotsm x_k^{m_k}$,
by the vector fields $\frac{1}{m_0} x_0 \partial_0 - \frac{1}{m_j} x_j \partial_j$ for
$1 \leq j \leq k$, and by the vector fields $\partial_{k+1}, \dotsc, \partial_n$. The
restriction to the divisor $x_0 = 0$ is then given by the quotient
\[
	\Mmod / \Mmod \partial_0 \cong 
	\Dmod_{\CC^{n+1}} / \bigl( I + \Dmod_{\CC^{n+1}} \partial_0 \bigr).
\]
After playing with the relations a bit, this simplifies to $\Dmod_{\CC^n}[x_0] / J$, where $J$ is the right ideal generated by the monomial $x_0^{m_0}$, by the vector
fields $x_0^p (x_j \partial_j - \frac{m_j p}{m_0})$ for $1 \leq j \leq k$ and $0 \leq p \leq
m_0-1$, and by the vector fields $\partial_{k+1}, \dotsc, \partial_n$. This is a
direct sum of $m_0$ different $\Dmod$-modules, indexed by the integer $0 \leq p \leq
m_0-1$, which look like
\[
	\Dmod_{\CC^n} / \bigl( x_1 \partial_1 - m_1 p/m_0, \dotsc, x_k \partial_k - m_k p/m_0,
	\partial_{k+1}, \dotsc, \partial_n \bigr) \Dmod_{\CC^n}.
\]
This is a presentation for the $\ast$-extension of the local system $L_p$
across the divisor $x_1 \dotsm x_k = 0$, completing the proof of the
lemma.
\end{proof}

\bibliographystyle{abbrv}
\bibliography{higher_multiplier_ideals}

\vspace{\baselineskip}

\footnotesize{
\textsc{Department of Mathematics, Stony Brook University, Stony Brook, New York 11794, United States} \\
\indent \textit{E-mail address:} \href{mailto:cschnell@math.stonybrook.edu}{cschnell@math.stonybrook.edu}

\vspace{\baselineskip}

\textsc{Department of Mathematics, University of Kansas, 1450 Jayhawk Blvd, Lawrence, KS 66045, United States} \\
\indent \textit{E-mail address:} \href{mailto:ruijie.yang@ku.edu}{ruijie.yang@ku.edu} 
}

\end{document}